\documentclass[a4paper]{llncs}

\usepackage[breaklinks,colorlinks,citecolor=blue,linkcolor=blue,urlcolor=blue]{hyperref}
\usepackage[utf8x]{inputenc}
\usepackage{amsmath}
\usepackage{graphicx}
\usepackage{bm}
\usepackage{upgreek}
\usepackage{array}
\usepackage{amssymb} 
\usepackage{bbm}
\usepackage{dsfont}
\usepackage{mathtools}
\usepackage{enumerate}
\usepackage{paralist}
\usepackage{cleveref}
\usepackage{tabularx}
\usepackage{booktabs}
\usepackage{nicefrac}
\usepackage{svg}

\newcommand{\ubar}[1]{\text{\b{$#1$}}}

\usepackage{tikz} 
\usetikzlibrary{shapes.geometric}
\usetikzlibrary{positioning}
\tikzstyle{Lvertex}=[circle,draw, minimum size=1em, inner sep = 1pt]
\tikzstyle{Svertex}=[circle,draw, minimum size=4pt, inner sep = 0]
\tikzstyle{Svertexblack}=[circle,fill, minimum size=4pt, inner sep = 0]
\tikzstyle{arc}=[-latex, thick]
\tikzstyle{edge}=[thick]

\pgfdeclarelayer{bg}
\pgfsetlayers{bg,main}

\usepackage[ruled,linesnumbered,noend,noline]{algorithm2e}

\SetCommentSty{mycommfont}
\DontPrintSemicolon

\renewcommand{\vec}[1]{#1}

\newcommand{\R}{\mathds{R}}

\newcommand{\Qp}{\mathds{Q}_{\geq 0}}
\newcommand{\Qpp}{\mathds{Q}_{>0}}

\newcommand{\N}{\mathds{N}}
\newcommand{\E}{\mathds{E}}
\renewcommand{\Pr}{\mathds{P}}
\newcommand{\assign}{\leftarrow}

\newcommand{\NP}{\mathsf{NP}}

\newcommand{\Z}{\mathds{Z}}

\newcommand{\eps}{\varepsilon}
\newcommand{\abs}[1]{\lvert{#1}\rvert}
\newcommand{\card}[1]{\lvert{#1}\rvert}
\newcommand{\norm}[1]{\lVert{#1}\rVert}
\newcommand{\floor}[1]{\lfloor{#1}\rfloor}

\newcommand{\pd}[2]{\frac{\partial {#1}}{\partial{#2}}}
\newcommand{\define}{\coloneqq}

\newcommand{\ones}{\mathbf{1}}
\newcommand{\ind}{\chi}

\DeclareMathOperator*{\argmax}{argmax}

\DeclareMathOperator{\diag}{diag}
\DeclareMathOperator{\ber}{Ber}

\bibliographystyle{plain}
\pagestyle{plain}

\title{Packing under Convex Quadratic Constraints
\thanks{We acknowledge funding through the DFG CRC/TRR~154, Subproject~A007.}}

\author{Max Klimm\inst{1} \and Marc E. Pfetsch\inst{2}  \and \mbox{Rico Raber}\inst{3} \and Martin Skutella\inst{3}}
\institute{School of Business and Economics, HU Berlin, Spandauer Str.~1, 10178~Berlin, Germany,  \email{max.klimm@hu-berlin.de}. \and
  Department of Mathematics, TU Darmstadt, Dolivostr.~15, 64293 Darmstadt, Germany, \email{pfetsch@mathematik.tu-darmstadt.de} \and
  Institute of Mathematics, TU Berlin, Stra\ss e des 17.~Juni 136, 10623~Berlin, Germany, \email{$\{$raber,skutella$\}$@math.tu-berlin.de}}
 
\authorrunning{M. Klimm, M. E. Pfetsch, R. Raber, and M. Skutella}

\date{\today}

\begin{document}
\maketitle

\begin{abstract} 
We consider a general class of binary packing problems with a convex quadratic knapsack constraint.
We prove that these problems are APX-hard to approximate and present constant-factor approximation
algorithms based upon three different algorithmic techniques: (1) a rounding technique tailored
to a convex relaxation in conjunction with a non-convex relaxation whose approximation ratio equals the
golden ratio; (2) a greedy strategy; (3) a randomized rounding method
leading to an approximation algorithm for the more general case with multiple convex quadratic
constraints. We further show that a combination of the first two strategies can be used to yield a monotone algorithm leading to a strategyproof mechanism for a game-theoretic variant of the problem. Finally, we present a computational study of the empirical approximation of the three
algorithms for problem instances arising in the context of real-world gas transport networks.
\end{abstract}

\section{Introduction}

We consider packing problems with a convex quadratic knapsack constraint of the form
\begin{align}
\begin{split}
\text{maximize} \qquad &\vec p^\top \vec x\\
\text{subject to} \qquad &\vec x^\top \vec W \vec x \leq c,\\
&\vec x \in \{0,1\}^n, \label{eq:problem}
\end{split}\tag{$P$}
\end{align}
where $\vec W \in \Qp^{n \times n}$ is a symmetric positive semi-definite (psd) matrix with non-negative entries, $\vec p \in \Qp^n$ is a non-negative profit vector, 
and $c \in \Qp$ is a non-negative budget. 
Such convex and quadratically constrained packing problems are clearly $\NP$-complete since they contain the classical (linearly constrained) $\NP$-complete knapsack problem \cite{karp1972} as a special case when $\vec W$ is a diagonal matrix. In this paper, we therefore focus on the development of approximation algorithms. For some $\rho \in [0,1]$, an algorithm is a $\rho$-approximation algorithm if its runtime is polynomial in the input size and for every instance, it computes a solution with objective value at least $\rho$ times that of an optimum solution.
The value $\rho$ is then called the \emph{approximation ratio} of the algorithm. We note that the assumption on $W$ being psd is necessary in order to allow for sensible approximation. To see this, observe that when $W$ is the adjacency matrix of an undirected graph and $c = 0$, \eqref{eq:problem} encodes the problem of finding an independent set of maximal weight, which is $\mathsf{NP}$-hard to approximate within a factor better than $n^{-(1-\epsilon)}$ for any $\epsilon > 0$, even in the unweighted case \cite{Hastad1999}. 

The packing problems that we consider also have a natural interpretation in terms of mechanism design. 
Consider a situation where a set of $n$ selfish agents demands a service, and the subsets of agents that can be served simultaneously are modeled by a convex quadratic packing constraint. Each agent~$j$ has private information $p_j$ about its willingness to pay for receiving the service.
In this context, a (direct revelation) mechanism takes as input the matrix $\vec W$ and the budget $c$. It then elicits the private value $p_j$ from agent~$j$. Each agent~$j$ may misreport a value $p'_j$ instead of their true value $p_j$ if this is to their benefit. The mechanism then computes a solution $\vec x \in \{0,1\}^n$ to \eqref{eq:problem} as well as a payment vector $\vec g \in \Qp^n$. A mechanism is strategyproof if no agent has an interest in misreporting $p_j$, no matter what the other agents report.

Before we present our results on approximation ratios and mechanisms for non-negative, convex, and quadratically constrained packing problems, we give two real-world examples that fall into this category.

\begin{example}[Welfare maximization in gas supply networks]\label{ex:gas}
Consider a gas pipe\-line modeled by a directed graph $G = (V,E)$ with different entry and exit nodes. There is a set of $n$ transportation requests $(s_j,t_j,q_j,p_j)$, $j \in [n] \define \{1, \dots, n\}$, each specifying an entry node $s_j \in V$, an exit node $t_j \in V$, the amount of gas to be transported $q_j \in \Qp$, and an economic value $p_j \in \Qp$. One model for gas flows in pipe networks is given by the Weymouth equations \cite{weymouth1912problems} of the form
\begin{align*}
\beta_e\, q_e\, \abs{q_e} = \pi_u - \pi_v \quad \text{ for all $e = (u,v) \in E$.}	
\end{align*}
Here, the parameter $\beta_e \in \Qpp$ is a pipe specific value that depends on physical properties of the pipe segment modeled by the edge, such as length, diameter, and roughness. Positive flow values $q_e > 0$ denote flow from~$u$ to~$v$, while a negative $q_e$ indicates flow in the opposite direction. The value $\pi_u$ denotes the square of the pressure at node $u \in V$. In real-life gas networks, there is typically a bound $c \in \Qp$ on the maximal difference of the squared pressures in the network. For the operation of gas networks, it is a natural problem to find the welfare-maximal subset of transportation requests that can be satisfied simultaneously while satisfying the pressure constraint.

To illustrate this problem, we consider the particular case in which the network has a path topology similar to the one depicted in Figure~\ref{fig:gas_network}.  We assume that for each request the entry node is left of the exit node. Thus, the pressure in the pipe is decreasing from left to right. For $j \in [n]$, let $E_j \subseteq E$ denote the set of edges on the unique $(s_j,t_j)$-path in $G$. Indexing the vertices $v_0,\dots,v_k$ and edges $e_1, \dots, e_k$ from left to right, the maximal squared pressure difference in the pipe is given by
\begin{align*}
\pi_0 - \pi_k &= \sum_{i=1}^k \bigl( \pi_{i-1} - \pi_i \bigr)
= \sum_{i=1}^{k}\beta_{e_i} \Bigl(\sum_{j \in [n] : e_i \in E_j} q_j\, x_j\Bigr)^{\!2},
\end{align*}
where~$x_j\in\{0,1\}$ indicates whether transportation request $j \in [n]$ is being served.
For the matrix $\vec W = (w_{ij})_{i,j \in [n]}$ defined by $w_{ij} = \sum_{e \in E_i \cap E_j} \beta_e\, q_i\,q_j$, the pressure constraint can be formulated as $\vec x ^{\top} \vec W \vec x \leq c$. To see that the matrix~$\vec W$ is positive semi-definite, we write $\vec W = \sum_{e \in E} \beta_e\, \vec q^e\, (\vec q^e)^{\top}$, where $\vec q^e \in \Qp^n$ is defined as $q^e_i = q_i$ if $e \in E_i$, and $q^e_i = 0$, otherwise.

Gas networks are particularly interesting from a mechanism design perspective, since several countries employ or plan to employ auctions to allocate gas network capacities \cite{newbery2002}, but theoretical and experimental work uses only linear flow models \cite{mccabe1989,rassenti1994}, thus ignoring the physics of the gas flow.
\end{example}

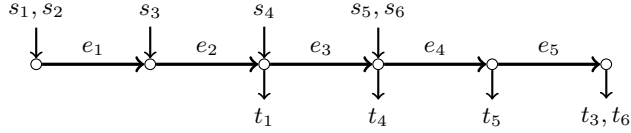
\begin{figure}[tb]
\centering
\begin{tikzpicture}[xscale=1.5]
\node[Svertex] (a) at (0,0) {};
\node (a+) at (0,0.7) {$s_1, s_2$};
\node[Svertex] (b) at (1,0) {};
\node (b+) at (1,0.7) {$s_3$};
\node[Svertex] (c) at (2,0) {};
\node (c+) at (2,0.7) {$s_4$};
\node (c-) at (2,-0.7) {$t_1$};
\node[Svertex](d) at (3,0) {};
\node (d+) at (3,0.7) {$s_5,s_6$};
\node (d-) at (3,-0.7) {$t_4$};
\node[Svertex] (e) at (4,0) {};
\node (e-) at (4,-0.7) {$t_5$};
\node[Svertex] (f) at (5,0) {};
\node (f-) at (5,-0.7) {$t_3,t_6$};
\begin{footnotesize}
\draw[edge,->] (a+) -- (a);
\draw[edge,->] (b+) -- (b);
\draw[edge,->] (c+) -- (c);
\draw[edge,->] (c) -- (c-);
\draw[edge,->] (d+) -- (d);
\draw[edge,->] (e) -- (e-);
\draw[edge,->] (d) -- (d-);
\draw[edge,->] (f) -- (f-);
\draw[very thick,->] (a) --node[above]{$e_1$} (b);
\draw[very thick,->] (b) --node[above]{$e_2$} (c);
\draw[very thick,->] (c) --node[above]{$e_3$} (d);
\draw[very thick,->] (d) --node[above]{$e_4$} (e);
\draw[very thick,->] (e) --node[above]{$e_5$} (f);
\end{footnotesize}
\end{tikzpicture}
\caption{\label{fig:gas_network}
Gas network with feed-in and feed-out nodes.
}
\end{figure}

\begin{example}[Processor speed scaling]
Consider a mobile device with battery capacity $c$ and $k$ compute cores. Further, there is a set of $n$ tasks $(\vec q^j, p_j)$, each specifying a load $\vec q^j \in \Qp^k$ for the $k$ cores and a profit~$p_j$. The computations start at time $0$ and all computations have to be finished at time $1$. In order to adapt to varying workloads, the compute cores can run at different speeds. In the speed scaling literature, it is a common assumption that energy consumption of core~$i$ when running at speed $s$ is equal to $\beta_i\, s^2$, where $\beta_i \in \Qpp$ is a core-specific parameter \cite{bansal2004,irani2005,Wierman2012}.\footnote{Other works assume that the relationship is cubic, but experiments conducted by Wierman et al.~\cite{Wierman2012} suggest that the relationship is closer to quadratic than cubic.} The goal is to select a profit-maximal subset of tasks that can be scheduled in the available time with the available battery capacity. Given a subset of tasks, it is without loss of generality to assume that each core runs at the minimal speed such that the core finishes at time $1$, i.e., every core~$i$ runs at speed $\sum_{j \in [n]} x_j\, q^j_i$ so that the total energy consumption is $\smash{\sum_{i=1}^k \beta_{i} (\sum_{j \in [n]} x_j\, q^j_i)^2}$. The energy constraint can thus be formulated as a convex quadratic constraint.

Mechanism design problems for processor speed scaling are interesting when the tasks are controlled by selfish agents and access to computation on the energy-constrained device is determined via an auction. 
\end{example}

\subsection{Our Results}

In Section~\ref{cf_approx} we derive a $\phi$-approximation algorithm for packing problems with convex quadratic constraints 
 where $\phi = (\sqrt{5}-1)/2 \approx 0.618$ is the inverse golden ratio.
The algorithm first solves a convex relaxation and scales the solution by $\phi$, which turns it 
into a feasible solution to a second non-convex relaxation. 
The latter relaxation has the property that any solution
can be transformed into a solution with at most one fractional component without decreasing the objective value.
In the end, the algorithm returns the integral part of the transformed solution. 
Combining this procedure with a partial enumeration scheme yields a $\phi$-approximation; 
see \Cref{thm:phi_approximation}.
In Section~\ref{sec:greedy} we prove that the greedy algorithm, when combined with partial
enumeration, is a constant-factor approximation algorithm with an approximation ratio
between $(1 - \sqrt{3}/e)\approx 0.363$ and $\phi$; see \Cref{thm:greedy_approximation} and
\Cref{thm:greedy_upper_bound}.
In Section~\ref{section:monotone_greedy}, we show that a combination of the results from the previous section allows to derive a strategyproof mechanism with constant approximation ratio.
In Section~\ref{sec:randomized} we derive a randomized constant-factor approximation algorithm for the more general problem with a constant number of $r$ convex quadratic packing constraints.
The algorithm solves a convex relaxation, scales the solution, and performs randomized rounding based on that scaled solution.
Combining this algorithm with partial enumeration yields a constant-factor approximation; see \Cref{thm:rand_rounding_ratio}.
In Section~\ref{sec:hardness} we show that packing problems with convex quadratic constraints 
of type \eqref{eq:problem} are $\mathsf{APX}$-hard; see \Cref{no_ptas3}.
Finally, in
Section~\ref{sec:computational_results},
we apply the three algorithms to several instances of the problem type described in \Cref{ex:gas} based on real-world data 
from the GasLib library \cite{SABHJKKIOSSS17}.

\subsection{Related Work}

When $\vec W$ is a non-negative diagonal matrix, the quadratic constraint in \eqref{eq:problem} becomes linear and the problem is then equivalent to the $0$-$1$-knapsack problem which 
admits a \emph{fully polynomial-time approximation scheme} (FPTAS) \cite{Ibarra1975}.
Another interesting special case is when $\vec W$ is completely-positive, i.e., it can then be written as $\vec W = \vec U \vec U^{\top}$ for some matrix $\vec U \in \Qp^{n \times k}$ 
with non-negative entries. The minimal $k$ for which $\vec W$ can be expressed in this way is called the \emph{cp-rank} of $\vec W$, see \cite{Berman2003} for an overview on 
completely positive matrices. The quadratic constraint in \eqref{eq:problem} can then be expressed as $\norm{\vec U^{\top} x}_2 \leq \sqrt{c}$. 
For the case that $\vec U \in \Qp^{n \times 2}$, this problem is known as the $2$-weighted knapsack problem for which Woeginger~\cite{Woeginger2000} showed that it does not 
admit an FPTAS, unless $\mathsf{P} = \NP$. Chau et al.~\cite{Chau2016} settled the complexity of this problem showing that it admits a \emph{polynomial-time approximation scheme} (PTAS). Elbassioni et al.~\cite{Elbassioni2017} generalized this 
result to matrices with constant cp-rank.

Exchanging constraints and objective in~\eqref{eq:problem} leads to knapsack problems with quadratic objective function and a linear constraint first studied by Gallo~\cite{gallo1980}. These problems have a natural graph-theoretic interpretation where nodes and edges have profits, the nodes have weights, and the task is to choose a subset of nodes so as to maximize the total profit of the induced subgraph. Rader and Woeginger~\cite{rader2002} give an FPTAS when the graph is edge series-parallel. Pferschy and Schauer~\cite{pferschy2016} generalize this result to graphs of bounded treewidth. They also give a PTAS for graphs not including a forbidden minor which includes planar graphs.

Mechanism design problems with a knapsack constraint are contained as a special case when $W$ is a diagonal matrix. For this special case, Mu'alem and Nisan~\cite{Mualem2008} give a mechanism that is strategyproof and yields a $1/2$-approximation. Briest et al.~\cite{Briest2011} give a general framework that allows to construct a mechanism that is an FPTAS for the objective function. Aggarwal and Hartline~\cite{Aggarwal2006} study knapsack auctions with the objective to maximize the sum of the payments to the mechanism.

\section{Preliminaries}

For ease of exposition, we assume that all matrices and vectors are integer.
Let $[n] \define \{1,\dots, n\}$ and $\vec W = (w_{ij})_{i,j \in [n]}\in \N^{n\times n}$ be a symmetric psd matrix.
Furthermore, let
$\vec p\in \N^n$ be a profit vector and let $c \in \N$ be a budget. We consider problems of the form~\eqref{eq:problem}, i.e., $\max\,\{p^\top x : x^\top W x \leq c, x \in \{0,1\}^n\}$.
Throughout the paper, we denote the characteristic vector of a subset $S \subseteq [n]$ by $\vec \ind_S \in \{0,1\}^n$, i.e., $\ind_i = 1$ if $i \in S$ and $\ind_i = 0$, otherwise.

We first state the intuitive result that after fixing $x_i = 1$ for $i \in N_1 \subseteq [n]$ and fixing $x_i = 0$ for $i \in N_0$ (with $N_0 \cap N_1 = \emptyset$), we again obtain a packing problem with a convex and quadratic packing constraint.

\begin{lemma}
\label{lem:set_01}
Let $\smash{\vec W \in \N^{n \times n}}$ be symmetric psd, $\smash{\vec p \in \N^n}$, and $\smash{c \in \N}$. Further, let $N_0$, $N_1 \in \smash{2^{[n]}}$ with $N_0 \cap N_1 =  \emptyset$ and $N_0 \cup N_1 \subsetneq [n]$ be arbitrary. Then, there exist $\tilde{n} \in \N$, $\tilde{\vec W} \in \N^{\tilde{n} \times \tilde{n}}$ symmetric psd, $\tilde{\vec p} \in \N^{\tilde{n}}$, and $\tilde{c} \in \N$ such that
\begin{align*}
  &\max\,\bigl\{ \vec p^\top \vec x \,:\, \vec x^\top \vec W \vec x \leq c,\; \vec x \in \{0,1\}^n,\; x_i = 0 \;\forall i \in N_0,\; x_i =  1 \;\forall i \in N_1\bigr\}\\
  & = \vec p^\top \vec \ind_{N_1} + \max\, \bigl\{ \tilde{\vec p}^{\top}\tilde{\vec x} \,:\, \tilde{\vec x}^\top \tilde{\vec W} \tilde{\vec x} \leq \tilde{c},\; \tilde{\vec x} \in \{0,1\}^{\tilde{n}}\bigr\}.
\end{align*}
\end{lemma}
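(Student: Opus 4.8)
The plan is to eliminate the fixed variables and absorb their contribution into a smaller instance of the same type. Write $N \define [n]\setminus(N_0\cup N_1)$ and $\tilde n \define \card{N}$; since $N_0\cup N_1\subsetneq[n]$, we have $\tilde n\ge 1$. Every $\vec x\in\{0,1\}^n$ with $x_i=0$ for $i\in N_0$ and $x_i=1$ for $i\in N_1$ corresponds bijectively to its restriction $\tilde{\vec x}\in\{0,1\}^{\tilde n}$ to the coordinates in $N$, and under this correspondence $\vec p^\top\vec x = \vec p^\top\vec\ind_{N_1} + \tilde{\vec p}^\top\tilde{\vec x}$ with $\tilde{\vec p}\define(p_j)_{j\in N}\in\N^{\tilde n}$. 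So only the quadratic form needs to be rewritten.

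Next I would expand $\vec x^\top\vec W\vec x=\sum_{i,j\in[n]}w_{ij}x_ix_j$ by splitting the index set into $N_0$, $N_1$, and $N$. Every summand with an index in $N_0$ vanishes because $x_i=0$ there, so
\[
  \vec x^\top\vec W\vec x \,=\, \vec\ind_{N_1}^\top\vec W\vec\ind_{N_1} + 2\sum_{i\in N_1}\sum_{j\in N}w_{ij}\tilde x_j + \sum_{i,j\in N}w_{ij}\tilde x_i\tilde x_j .
\]
The key step is to linearize the middle (cross) term using $\tilde x_j=\tilde x_j^2$ for $\tilde x_j\in\{0,1\}$ and fold it into the diagonal: define $\tilde{\vec W}=(\tilde w_{ij})_{i,j\in N}$ by $\tilde w_{ij}\define w_{ij}$ for $i\neq j$ and $\tilde w_{jj}\define w_{jj}+2\sum_{i\in N_1}w_{ij}$. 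A short computation then gives $\vec x^\top\vec W\vec x=\vec\ind_{N_1}^\top\vec W\vec\ind_{N_1}+\tilde{\vec x}^\top\tilde{\vec W}\tilde{\vec x}$.

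Then I would verify that $\tilde{\vec W}$ has the required properties. It is symmetric by construction; its entries lie in $\N$ because $\vec W\in\N^{n\times n}$ has non-negative entries, so $2\sum_{i\in N_1}w_{ij}$ is a non-negative integer; and it is psd because it is the sum of the principal submatrix $\vec W_{N,N}$ of $\vec W$ (which is psd, since for $\vec y\in\R^{\tilde n}$ its zero-extension $\bar{\vec y}\in\R^n$ satisfies $\vec y^\top\vec W_{N,N}\vec y=\bar{\vec y}^\top\vec W\bar{\vec y}\ge 0$) and a diagonal matrix with non-negative entries (which is psd). Setting $\tilde c\define c-\vec\ind_{N_1}^\top\vec W\vec\ind_{N_1}$, the constraint $\vec x^\top\vec W\vec x\le c$ is equivalent to $\tilde{\vec x}^\top\tilde{\vec W}\tilde{\vec x}\le\tilde c$; combining this with the objective identity and taking the maximum over both sides yields the claimed equation.

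There is no serious obstacle beyond this bookkeeping; the one genuinely substantive point is that absorbing the cross terms into the diagonal preserves positive semi-definiteness (and integrality/non-negativity), which hinges on $\vec W$ having non-negative entries. The only case requiring a word of care is $\tilde c<0$: then $\vec\ind_{N_1}$ already violates the budget, so the left-hand problem is infeasible; one may therefore assume $\vec\ind_{N_1}^\top\vec W\vec\ind_{N_1}\le c$ (hence $\tilde c\in\N$) whenever the fixing in question is feasible, which is the only situation in which the lemma is invoked.
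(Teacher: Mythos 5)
Your proposal is correct and follows essentially the same route as the paper: the same matrix $\tilde{\vec W}$ obtained by folding the cross terms with $N_1$ into the diagonal, the same budget $\tilde c = c - \vec\ind_{N_1}^\top\vec W\vec\ind_{N_1}$, and the same positive semi-definiteness argument via a principal submatrix plus a non-negative diagonal matrix. Your explicit treatment of the case $\tilde c < 0$ is a small extra care point the paper glosses over, but it does not change the argument.
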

\begin{proof}
Let $n_0 = \card{N_0}$, $n_1 = \card{N_1}$, and $\tilde{n} \define n - n_0 - n_1$. 
Without loss of generality we can assume that $[\tilde n] = [n] \setminus (N_0\cup N_1)$.
Consider the matrix $\tilde{\vec W} = (\tilde{w}_{ij}) \in \N^{\tilde{n} \times \tilde{n}}$ defined as
\begin{align*}
\tilde{w}_{ij} = 
\begin{cases}
w_{ij} &\text{ if $i \neq j$},\\
w_{ij} + 2\sum_{k \in N_1} w_{ik}&\text{ if $i = j$,}
\end{cases} \qquad i,\,j\in [\tilde n].
\end{align*}
Note that $\tilde{\vec W}$ is obtained from $\vec W$ by taking principal minors and adding diagonal matrices with non-negative entries so that $\tilde{\vec W}$ is positive semi-definite. Let $\tilde{c} = c- \vec \ind_{N_1}^\top \vec W \vec \ind_{N_1}$. 
With a slight abuse of notation, for a set $S \subseteq [\tilde{n}]$, let $\tilde{\vec{\ind}}_S$ 
denote its characteristic vector in $\{0,1\}^{\tilde{n}}$ and $\vec{\ind}_S$ its characteristic vector in $\{0,1\}^n$. 
We then obtain for all $S \subseteq [\tilde{n}]$ the equality
\begin{align*}
\tilde{\vec \ind}_S^{\top} \tilde{\vec W} \tilde{\vec \ind}_S  &= \sum_{i \in S} \Big(w_{ii} + 2\sum_{k \in N_1} w_{ik}\Big) + \sum_{i,j \in S : i \neq j} 2w_{ij}\\ 
&= \vec \ind_{S \cup N_1}^\top \vec W \vec \ind_{S \cup N_1} - \vec \ind_{N_1}^\top \vec W \vec \ind_{N_1}	.
\end{align*}
Thus, we have $\tilde{\vec \ind}^\top \tilde{\vec W} \tilde{\vec \ind}_{S} \leq \tilde{c}$ 
if and only if $\vec \ind_{S \cup N_1}^\top \vec W \vec \ind_{S \cup N_1} \leq c$. 
Defining $\tilde{p} \in \N^{\tilde{n}}$ with $\tilde{p}_i = p_i$ for all $i \in [\tilde{n}]$ then establishes the claimed result.
\qed\end{proof}

By Lemma~\ref{lem:set_01}, the following assumptions are without loss of generality.

\begin{lemma}\label{lem:wlog}
It is without loss of generality to assume that $0 < w_{ii} \leq c$ and $p_i > 0$ for all $i \in [n]$.
\end{lemma}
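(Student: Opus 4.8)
The statement has two independent parts: (i) we may assume $p_i > 0$ for all $i$, and (ii) we may assume $0 < w_{ii} \leq c$ for all $i$. The plan is to handle each part by exhibiting a simple reduction that removes "bad" indices while preserving the optimal value (and a correspondence between solutions), so that the remaining instance satisfies the claimed properties.

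For part (i), I would argue that an index $i$ with $p_i = 0$ can simply be deleted by setting $x_i = 0$. Concretely, apply Lemma~\ref{lem:set_01} with $N_0 = \{i : p_i = 0\}$ and $N_1 = \emptyset$: since the deleted variables contribute nothing to the objective, the optimal value is unchanged, and the reduced instance given by the lemma has a strictly positive profit vector. (One should note the edge case $N_0 = [n]$, i.e.\ $\vec p = \vec 0$, where the problem is trivial with optimal value $0$; this does not fall under Lemma~\ref{lem:set_01} as stated, but is vacuous.)

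For part (ii), first observe that any index $i$ with $w_{ii} > c$ must have $x_i = 0$ in every feasible solution, because $\vec x^\top \vec W \vec x \geq w_{ii} x_i^2 = w_{ii} x_i$ (using nonnegativity of all entries of $\vec W$ and of $\vec x$), so $x_i = 1$ already violates the budget. Such indices can therefore be placed in $N_0$ and removed via Lemma~\ref{lem:set_01}. The more delicate case is $w_{ii} = 0$: since $\vec W$ is psd with nonnegative entries, a zero diagonal entry forces the whole $i$-th row and column to vanish — indeed, for psd $\vec W$, $w_{ii} = 0$ implies $w_{ij} = w_{ji} = 0$ for all $j$ (consider the $2\times 2$ principal minor $\begin{psmallmatrix} w_{ii} & w_{ij}\\ w_{ji} & w_{jj}\end{psmallmatrix}$, whose determinant $-w_{ij}^2$ must be $\geq 0$). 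Hence variable $x_i$ does not appear in the constraint at all, and since by part (i) we have $p_i > 0$, setting $x_i = 1$ is always optimal; we move $i$ into $N_1$ and remove it via Lemma~\ref{lem:set_01}, adding $p_i$ to the objective. After removing all indices with $w_{ii} = 0$ or $w_{ii} > c$, the residual instance satisfies $0 < w_{ii} \leq c$ for all remaining indices.

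The main thing to be careful about — rather than a deep obstacle — is the order and interaction of the two reductions: removing $w_{ii} = 0$ indices into $N_1$ relies on $p_i > 0$, so part (i) should be applied first; and one must check that the reductions of Lemma~\ref{lem:set_01} preserve the structural hypotheses ($\vec W$ symmetric psd with nonnegative entries, $\vec p$ nonnegative integral, $c$ nonnegative integral), which is exactly what that lemma guarantees. A final remark is that the process terminates after at most $n$ removals and leaves a nonempty instance unless the original problem is trivial, so "without loss of generality" is justified.
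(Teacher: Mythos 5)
Your proposal is correct and follows essentially the same route as the paper: both arguments dispose of indices with $w_{ii} > c$ (forced $x_i = 0$), $w_{ii} = 0$ (psd forces the whole row/column to vanish, so one may fix $x_i = 1$), and $p_i = 0$ (fix $x_i = 0$), and then invoke Lemma~\ref{lem:set_01} to reduce to a smaller instance of the same form. Your extra care about the $2\times 2$ principal minor and the order of the reductions is fine but not needed beyond what the paper states.
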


\begin{proof}
If $w_{ii} > c$ for some $i \in [n]$, then $x_i = 0$ in every feasible solution $\vec x$. If $w_{ii} = 0$, then the positive semi-definiteness of $\vec W$ implies $w_{ij} = w_{ji} = 0$ for every $j \in [n]$. 
Hence, the value of $x_i$ does not influence the value of $\vec x^{\top}\vec W \vec x$ and it is without loss of generality to assume that $x_i = 1$.
Furthermore, if $p_i = 0$ then the value of $x_i$ does not influence the value of $p^{\top}x$ and it is without loss of generality to assume that $x_i = 0$. In all cases, Lemma~\ref{lem:set_01} yields the claimed result.
\qed\end{proof}

\section{A Golden Ratio Approximation Algorithm}
\label{cf_approx}

In this section, we derive a $\phi$-approximation algorithm for packing problems with convex quadratic constraints of type \eqref{eq:problem} where $\phi = (\sqrt{5}-1)/2 \approx 0.618$ is the inverse golden ratio. To this end, we first solve a convex relaxation of the problem. We then use the resulting solution to compute a feasible solution to another non-convex relaxation of the problem.
The second relaxation has the property that any solution can be transformed so that it has at most one fractional value, and the transformation does not decrease the objective value. Together with a partial enumeration scheme in the spirit of Sahni~\cite{Sahni1975}, this yields a $\phi$-approximation.

Denote by $\vec d \in \N^n$ the diagonal of $\vec W\in \N^{n \times n}$ and let $\smash{\vec D\define \diag(\vec d)\in \N^{n\times n}}$ be the corresponding diagonal matrix.
For a vector $\vec x\in \{0,1\}^n$ we have $x_i^2 = x_i$ for all $i \in [n]$ and, thus, we obtain
\begin{align*}
\vec x^{\top} \vec W \vec x \ge \vec x^{\top} \vec D \vec x = \vec d^{\top}\vec x \quad \text{ for all $x \in \{0,1\}^n$}.
\end{align*}
We arrive at the following relaxation of \eqref{eq:problem}:
\begin{align}
\begin{split}
\text{maximize} \qquad & \floor{\vec p^{\top} \vec x} \\
\text{subject to} \qquad & 
\vec x^{\top} \vec W \vec x \le c, \\
&\vec d^\top \vec x \leq c,\\
&\vec x \in [0,1]^n.
\end{split}\tag{$R_1$}\label{eq:convex_relax}
\end{align}

The following lemma shows that we can compute an exact optimal solution to  \eqref{eq:convex_relax} in polynomial time.

\begin{lemma}
\label{lem:convex_relax}
The relaxation \eqref{eq:convex_relax} can be solved exactly in polynomial time.
\end{lemma}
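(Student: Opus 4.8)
The plan is to reduce \eqref{eq:convex_relax} to a polynomial number of \emph{exact} convex quadratic feasibility tests, exploiting that the objective $\floor{\vec p^\top \vec x}$ is integer-valued. Write $F \define \{\vec x \in [0,1]^n : \vec x^\top \vec W \vec x \le c,\ \vec d^\top \vec x \le c\}$ for the feasible region of \eqref{eq:convex_relax}; it is convex and compact, and $\vec 0 \in F$, so $F \neq \emptyset$. Since $\vec p \in \N^n$ and $F$ is compact, the optimum of \eqref{eq:convex_relax} equals $\floor{\max_{\vec x \in F} \vec p^\top \vec x} = \max\{k \in \Zp : \exists\, \vec x \in F \text{ with } \vec p^\top \vec x \ge k\}$, and the predicate ``there is $\vec x \in F$ with $\vec p^\top \vec x \ge k$'' is monotonically non-increasing in $k$. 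It is false for $k > \sum_{i} p_i$ and true for $k = 0$, so a binary search over $\{0,1,\dots,\sum_i p_i\}$ locates the largest feasible $k$ using $O(\log \sum_i p_i)$ tests, which is polynomial in the input size. It therefore remains to perform a single test in polynomial time.

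Fix $k$ and set $P_k \define \{\vec x \in [0,1]^n : \vec d^\top \vec x \le c,\ \vec p^\top \vec x \ge k\}$, a rational polytope. First decide whether $P_k \neq \emptyset$ by linear programming; if $P_k = \emptyset$, the predicate fails. Otherwise, the predicate holds if and only if $\min\{\vec x^\top \vec W \vec x : \vec x \in P_k\} \le c$: a minimizer attaining value at most $c$ lies in $F$ and satisfies $\vec p^\top \vec x \ge k$, while conversely any $\vec x \in F$ with $\vec p^\top \vec x \ge k$ lies in $P_k$ and witnesses $\min \le c$. This is the minimization of a positive semi-definite quadratic form over a rational polytope, i.e., a convex quadratic program, which can be solved exactly in polynomial time, the optimum being attained at a rational point whose encoding length is polynomially bounded in the input; comparing that rational optimal value with $c$ settles the test. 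Returning, for the largest feasible $k$, the point produced by the corresponding quadratic program yields an optimal solution of \eqref{eq:convex_relax}.

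The step that requires care — and the reason the floor in the objective of \eqref{eq:convex_relax} is convenient — is \emph{exactness}. Maximizing the linear objective $\vec p^\top \vec x$ directly over $F$ and rounding down does not obviously work: when the constraint $\vec x^\top \vec W \vec x \le c$ is active, this maximum is in general irrational, so a finite-precision convex-programming routine need not reveal its floor. Routing the computation through integer targets sidesteps this, since each subproblem minimizes a quadratic form over a polytope and hence has a rational optimum of polynomially bounded size, so every comparison with $c$ — and thus every decision — is exact. (Alternatively, one may first compute $\max_{\vec x \in F} \vec p^\top \vec x$ to additive error $\tfrac14$ in polynomial time, which restricts the answer to $O(1)$ candidate integers, and then run only those few exact quadratic feasibility tests in place of the full binary search.)
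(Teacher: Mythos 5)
Your proposal is correct and follows essentially the same route as the paper: binary search over the integer objective targets $q \in \{0,\dots,\sum_i p_i\}$, resolving each target by exactly minimizing the convex quadratic form $\vec x^\top \vec W \vec x$ over the polytope given by $\vec p^\top \vec x \ge q$, $\vec d^\top \vec x \le c$, $\vec x \in [0,1]^n$ (the paper's program, solved via Kozlov et al.) and comparing the rational optimum with $c$. Your explicit LP feasibility check and the remark on why the floor makes the decisions exact are harmless elaborations of the same argument.
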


\begin{proof}
For every $\vec x \in [0,1]^n$, we have $\floor{\vec p^\top \vec x} \in  P \define \{0,\dots,\sum_{i\in [n]} p_i\}$. For fixed $q \in P$, consider the mathematical program
\begin{align}
\begin{split}
\text{minimize} \qquad & \vec x^\top \vec W \vec x \\
\text{subject to} \qquad & 
\vec p^{\top} \vec x \ge q, \\
&\vec d^\top \vec x \leq c,\\
&\vec x \in [0,1]^n
\end{split}\tag{$D_q$}\label{eq:convex_relax_dual}
\end{align}
with optimal value~$c(q)$. Since~\eqref{eq:convex_relax_dual} is quadratic and convex with linear constraints, it can be solved exactly in polynomial time, see Kozlov et al.~\cite{kozlov1980polynomial}. If $c(q) > c$, we conclude that the maximal value of \eqref{eq:convex_relax} is strictly smaller than~$q$. If $c(q) \leq c$, the corresponding solution $\vec x$ solves \eqref{eq:convex_relax} with an objective value of $q$. With binary search over $P$, we can compute the maximal value $q^* \in P$ such that \eqref{eq:convex_relax_dual} has a solution of at most $c$. The thus computed value $q^*$ is the maximal objective of \eqref{eq:convex_relax} and the corresponding optimal solution $\vec x$ of \eqref{eq:convex_relax_dual} is an optimal solution of \eqref{eq:convex_relax}.
\qed\end{proof}

We proceed to propose a second relaxation of \eqref{eq:problem}.
To this end, note that for every $\vec x\in \{0,1\}^n$ we have
\[ \vec x^{\top}\vec W \vec x = \vec x^{\top}(\vec W-\vec D) \vec x + \vec x^{\top}\vec D \vec x 
= \vec x^{\top}(\vec W-\vec D) \vec x + \vec d^{\top} \vec x.  \]
Relaxing the integrality condition yields the following relaxation of \eqref{eq:problem}:
\begin{align}
\begin{split}
\text{maximize} \qquad & \vec p^{\top} \vec x \\
\text{subject to} \qquad & 
\vec x^{\top}(\vec W-\vec D)\vec x + \vec d^{\top}\vec x \le c, \\
&\vec x \in [0,1]^n.
\end{split}
\label{eq:nonconvex_relax}\tag{$R_2$}
\end{align}
Note that since the trace of $\vec W - \vec D$ is zero, $\vec W - \vec D$ has a negative eigenvalue unless all eigenvalues are zero.
Hence, $\vec W - \vec D$ is not positive semi-definite, unless $\vec W$ is a diagonal matrix.
Therefore, the relaxation \eqref{eq:nonconvex_relax} is in general not convex.

We proceed to show that \eqref{eq:nonconvex_relax} always has an optimal solution for which at most one variable is fractional. For $\vec x\in \R^n$, let 
$N_0(\vec x) \define \{i\in [n] : x_i = 0\}$, $N_1(\vec x) \define \{i\in [n] : x_i = 1\}$, and $N_f(\vec x) \define [n] \setminus (N_1(\vec x) \cup N_0(\vec x))$.

\begin{lemma}\label{only_one_frac}
For any feasible solution $\vec x$ of \eqref{eq:nonconvex_relax}, one can construct a feasible solution 
$\bar{\vec x}$ with $\card{N_f(\bar{\vec x})} \le 1$ and $\vec p^{\top} \bar{\vec x} \ge \vec p^\top \vec x$ in linear time.
\end{lemma}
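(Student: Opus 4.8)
The plan is to reduce the number of fractional components one at a time by a local exchange argument that touches only two coordinates per step, keeping $\vec p^\top\vec x$ unchanged and preserving feasibility. Write $g(\vec x)\define \vec x^{\top}(\vec W-\vec D)\vec x+\vec d^{\top}\vec x=\sum_{i\neq j}w_{ij}x_ix_j+\sum_i d_ix_i$ for the left-hand side of the constraint in \eqref{eq:nonconvex_relax}. Two structural facts are decisive: $\vec W-\vec D$ has a vanishing diagonal, and all entries of $\vec W-\vec D$ and of $\vec d$ are non-negative (so, in particular, $g$ is coordinatewise non-decreasing on $[0,1]^n$).

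Suppose $\card{N_f(\vec x)}\ge 2$ and pick two distinct indices $k,\ell\in N_f(\vec x)$; recall that $p_i>0$ for all $i$ by our standing assumption (Lemma~\ref{lem:wlog}). Consider the line $\vec x(t)\define \vec x+t\,(p_\ell\,\vec e_k-p_k\,\vec e_\ell)$, which changes only coordinates $k$ and $\ell$ and satisfies $\vec p^{\top}\vec x(t)=\vec p^{\top}\vec x$ for every $t$, since $p_\ell p_k-p_k p_\ell=0$. Along this line $g(\vec x(t))$ is a quadratic in $t$ whose leading coefficient equals $(p_\ell\vec e_k-p_k\vec e_\ell)^{\top}(\vec W-\vec D)(p_\ell\vec e_k-p_k\vec e_\ell)=-2\,p_k p_\ell\,w_{k\ell}\le 0$, because $\vec W-\vec D$ has zero $(k,k)$- and $(\ell,\ell)$-entries; hence $t\mapsto g(\vec x(t))$ is concave. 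Since $x_k,x_\ell\in(0,1)$ and $p_k,p_\ell>0$, the set of $t$ with $\vec x(t)\in[0,1]^n$ is a bounded interval $[t_-,t_+]$ with $t_-<0<t_+$, and at each of its endpoints one of the four box constraints on $x_k(t),x_\ell(t)$ is tight, so at least one of $x_k(t),x_\ell(t)$ lies in $\{0,1\}$. By concavity, $\min\{g(\vec x(t_-)),g(\vec x(t_+))\}\le g(\vec x(0))=g(\vec x)\le c$; letting $t^\star$ be the minimizing endpoint, the point $\vec x(t^\star)$ is feasible for \eqref{eq:nonconvex_relax}, has the same objective value as $\vec x$, and satisfies $\card{N_f(\vec x(t^\star))}\le\card{N_f(\vec x)}-1$ (no coordinate other than $k,\ell$ changes, and at least one of them becomes integral).

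Iterating this exchange yields, after at most $n-1$ steps, a feasible solution $\bar{\vec x}$ with $\card{N_f(\bar{\vec x})}\le 1$ and $\vec p^{\top}\bar{\vec x}\ge \vec p^{\top}\vec x$. For the running time, maintain the vector $\vec W\vec x$ and the scalar $g(\vec x)$ alongside $\vec x$; since each exchange modifies only two coordinates of $\vec x$, both can be updated — together with the two candidate values $g(\vec x(t_\pm))$ — in $O(n)$ time per step, and there are at most $n-1$ steps, giving a running time linear in the input size. The only nonroutine step is the choice of search direction in the second paragraph: one must stay in the plane spanned by $\vec e_k,\vec e_\ell$ and move along the line on which $\vec p^{\top}\vec x$ is constant, the point being that on precisely this line the otherwise indefinite quadratic $g$ is concave, so its minimum over the feasible segment is attained where the cube $[0,1]^n$ is left, i.e., where a coordinate becomes integral; everything after that is bookkeeping.
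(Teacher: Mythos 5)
Your proof is correct, and it reaches the paper's conclusion by a genuinely different mechanism. The paper also performs a two-coordinate exchange between fractional indices, but it moves in the direction $-\eps\,\vec\ind_j+\delta\,\vec\ind_i$ with $\delta/\eps=\nu_j(\vec x)/\nu_i(\vec x)$, i.e., essentially along a level set of the constraint function $v$, after ordering the two indices by the profit-to-marginal-weight ratios $r_k(\vec x)=p_k/\nu_k(\vec x)$; the translation identity $v(\vec x+t\vec\ind_k)=v(\vec x)+t\,\nu_k(\vec x)$ (valid because $\vec W-\vec D$ has zero diagonal) then yields $v(\bar{\vec x})\le v(\vec x)$, and the ratio comparison yields $\vec p^\top\bar{\vec x}\ge\vec p^\top\vec x$. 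You instead move along the exact level line of the linear objective and exploit the same zero-diagonal fact differently: restricted to that line, the constraint value is a quadratic in $t$ with leading coefficient $-2p_kp_\ell w_{k\ell}\le 0$, hence concave, so its minimum over the feasible segment is attained at an endpoint, where a coordinate becomes integral, and $\min\{g(\vec x(t_-)),g(\vec x(t_+))\}\le g(\vec x(0))\le c$ gives feasibility. The trade-off is instructive: your step needs no case distinction between the two indices and keeps the objective exactly constant, which suffices both for the lemma and for its use in \Cref{thm:phi_approximation}, whereas the paper's step (especially in the refined form of \Cref{rem:swapping_enhancement}) keeps the constraint value and channels all slack into a possible increase of the objective, which is slightly stronger and is what the computational section exploits. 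Your complexity accounting (maintain $\vec W\vec x$, $O(n)$ work per exchange, at most $n-1$ exchanges, hence time linear in the size of the input matrix) matches that of the paper's procedure, so the "linear time" claim is justified in the same sense as in the paper.
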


\begin{proof}
Let $\vec x$ be a feasible solution of \eqref{eq:nonconvex_relax}. 
Assume $\card{N_f(\vec x)} \ge 2$, and consider $i$, $j\in N_f( \vec x)$ with $i\neq j$, in particular, $x_i, x_j \in (0,1)$. We proceed to construct a feasible solution $\bar{\vec x}$ with $|N_f(\bar{\vec x})| \le |N_f(\vec x)| -1$ and $\vec p^\top \bar{\vec x} \ge \vec p^\top \vec x$; 
see \Cref{fig:nonconvex_constraint} for an illustration.
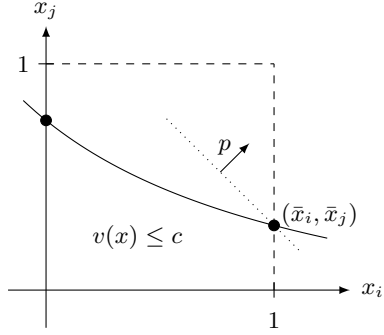
\begin{figure}[tb]
\centering
\begin{tikzpicture}
\draw[-latex] (-.5, 0) -- (4,0);
\draw[-latex] (0, -.5) -- (0,3.5);
\draw[dashed] (0,3)--(3,3);
\draw[dashed] (3,0)--(3,3);
\node at (1.2,.7) {$v(x) \le c$};
\draw (-.1,3)--(.1,3);
\draw (3,-.1)--(3,.1);
\node at (3,-.4) {1};
\node at (-.3,3) {1};
\node at (4.3,0) {$x_i$};
\node at (0,3.7) {$x_j$};
\filldraw (0,9/4) circle (2pt);
\filldraw (3,6/7) circle (2pt);
\node at (3.6, 1) {$(\bar x_i, \bar x_j)$};
\draw[rotate around={-45:(3,6/7)}, dotted] (1,6/7) -- (3.5,6/7); 
\draw[rotate around={-45:(3,6/7)}, -latex] (2,6/7) -- (2,1.4); 
\node at (2.35,1.9) {$p$};
\draw[domain=-.3:3.7, smooth, variable=\x] plot ({\x}, {(9-1*\x)/(4+1*\x)});
\end{tikzpicture}
\caption{Any feasible solution $x$ of \eqref{eq:nonconvex_relax} with $|N_f(x)|\ge 2$ can be transformed into a feasible solution $\bar x$ with $|N_f(\bar x)|\le |N_f(x)|-1$ without decreasing the objective value.}
\label{fig:nonconvex_constraint}
\end{figure}

Denote $v(x) \define \vec x^{\top}(\vec W-\vec D)\vec x + \vec d^{\top}\vec x$, and 
for $k \in \{i,j\}$ let 
\begin{align*}
\nu_k(\vec x) \define \pd{}{x_k} v(x) = \sum_{l \in [n] \setminus \{k\}} 2\,w_{kl}\,x_l + w_{kk},\qquad
r_k(\vec x) \define \frac{p_k}{\nu_k(\vec x)}. 
\end{align*}
By \Cref{lem:wlog} it is without loss of generality to assume that $w_{kk}>0$ and thus $\nu_k(\vec x) > 0$.
Note that $\nu_k(x)$ does not depend on $x_k$ and therefore, for all $\vec x\in \R^n$ and $t \in \R$, we have that 
\begin{align}
v(x + t \vec\ind_k) = v(x)  + t\, \nu_k(x),
\label{eq:translation}
\end{align}
where $\vec\ind_k\in \{0,1\}^n$ denotes the $k$-th unit vector.

Without loss of generality, assume that $r_i(\vec x) \ge r_j(\vec x)$ and define
\begin{align*}
\bar\eps &\define \frac{\nu_i(\vec x)}{\nu_j(\vec x)}(1 - x_i), &
\eps &\define \min(x_j, \bar\eps), &
\delta &\define \frac{\nu_j(\vec x)}{\nu_i(\vec x)} \eps.
\end{align*}

Consider the vector $\bar{\vec x} = \vec x - \eps \ind_j + \delta \ind_i$. By the definition of $\eps$, we have
$\bar x_j = x_j - \eps \ge  0$.
We further obtain
\begin{align*}
\bar x_i &= x_i + \delta = x_i + \frac{\nu_j(\vec x)}{\nu_i(\vec x)}\eps
\le x_i +  \frac{\nu_j(\vec x)}{\nu_i(\vec x)} \bar\eps = 1.
\end{align*}
Note that $\bar{x}_j = 0$ if $\eps = x_j$ and $\bar{x}_i = 1$ if $\eps = \bar{\eps}$ so that at least one of the inequalities $\bar{x}_j \geq 0$ and $\bar{x}_i \leq 1$ is tight.
We conclude that $\bar{\vec x} \in [0,1]^n$ and $|N_f(\bar{\vec x})| \le |N_f(\vec x)| -1$. 
Furthermore, applying \Cref{eq:translation}, we get 
\begin{align*}
v(\bar x) &= v(x - \eps \vec\ind_j + \delta \vec\ind_i) \\
&= v(x - \eps \vec\ind_j) + \delta \nu_i(x - \eps \vec\ind_j) \\
& = v(x) - \eps \nu_j(x) + \delta \nu_i(x - \eps \vec\ind_j) \\
& \le  v(x) -\eps\nu_j(x) + \delta \nu_i(x) \\
&= v(x).
\end{align*}
Thus, $\bar{\vec x}$ is a feasible solution of \eqref{eq:nonconvex_relax}. Moreover, we have
\begin{align*}
\vec p^\top \bar{\vec x} &= \vec p^{\top} \vec x - \eps p_j + \delta p_i \\
&=   \vec p^{\top}\vec x - \eps p_j + \eps \nu_j(\vec x)\frac{p_i}{\nu_i(x)}\\
&=  \vec p^{\top} \vec x - \eps p_j + \eps \nu_j(\vec x)r_i(\vec x) \\
&\ge  \vec p^{\top} \vec x - \eps p_j + \eps \nu_j(\vec x) r_j(\vec x) \\
&= \vec p^\top \vec x - \eps p_j + \eps p_j\\
&= \vec p^\top \vec x.
\end{align*}
Applying this construction iteratively (at most) $|N_f(\vec x)|-1 \le n - 1$ times yields the required result.
\qed\end{proof}

\begin{remark}\label{rem:swapping_enhancement}
The algorithm in the proof of \Cref{only_one_frac} can be improved by setting 
\begin{align*}
\bar \eps \define \frac{(1-x_i)\,\nu_i(\vec x)}{\nu_j(\vec x)+2\,w_{ij}\,(1-x_i)}, \qquad
\eps \define \min(x_j, \bar\eps), \qquad
\delta \define \frac{\eps\, \nu_j(\vec x)}{\nu_i(\vec x)-2\,w_{ij}\,x_j\,\eps}.
\end{align*}
In this way, we obtain $v(\bar x) = v(x)$ and increase the objective value at least as much as in the proof of \Cref{only_one_frac} 
while still ensuring that $\bar{\vec x}$ is feasible for \eqref{eq:nonconvex_relax} and $|N_f(\bar{\vec x})| \le |N_f(\vec x)| - 1$.
\end{remark}

We proceed to devise a $\phi$-approximation algorithm.
The algorithm iterates over all sets $H \subseteq [n]$ with $|H| \leq 3$. For each set $H$ it computes an optimal solution $\vec y^H$ to the convex relaxation \eqref{eq:convex_relax} with the additional constraints
\begin{align*}
x_i &= 1 \quad \text{ for all $i \in H$, and }\\
x_i &= 0 \quad \text{ for all $i \in \{j \in [n]\setminus H : p_j > \min_{h \in H} p_h\}$.}
\end{align*}
 Then, we scale down $\vec y^H$ by a factor of $\phi$ and show that $\phi \vec y^H$ is a 
 feasible solution to the non-convex relaxation \eqref{eq:nonconvex_relax}. 
 By Lemma~\ref{only_one_frac}, we can transform this solution into another solution $\vec z^H$ with at most one fractional variable.  
 The integral part of $\vec z^H$ is our candidate solution for the starting set $H$. 
 In the end, we return the best thus computed candidate over all possible sets $H$; see Algorithm \ref{golden_ratio_algo}.

\let\oldnl\nl
\newcommand{\nonl}{\renewcommand{\nl}{\let\nl\oldnl}}
\begin{algorithm}[tb]
\ForEach{$H \subseteq [n]$ with $|H| \le 3$}{
	$\vec y^H \assign$ sol.\ of \eqref{eq:convex_relax} with  $x_i = 1 \,\forall i \in H$, $x_i = 0 \, \forall i \in \{j \in [n]\setminus H : p_j > \smash{\min\limits_{h \in H} p_h}\}$\;
	$\vec z^H \assign$ transf.\ of $\phi \vec y^H$ containing at most one fractional variable\;
	$\bar{\vec z}^H \assign \floor{\vec z^H}$\;}
$H^* \assign \argmax\, \{ \vec p^\top \bar{\vec z}^H : H \subseteq [n] \text{ with } |H| \le 3\}$\;
\textbf{return} $\bar z^{H^*}$\;
\caption{Golden ratio algorithm\label{golden_ratio_algo}}
\end{algorithm}

\begin{theorem}\label{thm:phi_approximation}
Algorithm \ref{golden_ratio_algo} computes a $\phi$-approximation for \eqref{eq:problem}.	
\end{theorem}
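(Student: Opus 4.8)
The plan is a case distinction on the size of an optimal solution of \eqref{eq:problem}: the ``right'' guess is fed into the enumeration, and the three analytical ingredients are (i) feasibility of $\phi\,\vec y^H$ for \eqref{eq:nonconvex_relax}, (ii) the near\hyp integrality guaranteed by \Cref{only_one_frac}, and (iii) a bound on the profit lost when the last fractional coordinate is rounded. Fix an optimal $\vec x^*$, write $\opt\define\vec p^{\top}\vec x^*$ and $S^*\define\{i:x^*_i=1\}$. If $|S^*|\le 3$ put $H\define S^*$; otherwise let $H$ be the three elements of $S^*$ of largest profit. Then $|H|\le 3$, so the algorithm examines $H$, and $\vec\ind_{S^*}$ is feasible for the instance of \eqref{eq:convex_relax} solved in iteration $H$: it has $x_i=1$ on $H\subseteq S^*$; if $i\notin H$ and $p_i>\min_{h\in H}p_h$ then $i\notin S^*$ by the choice of $H$, so $x_i=0$; and $\vec\ind_{S^*}^{\top}\vec W\vec\ind_{S^*}\le c$ together with $\vec d^{\top}\vec\ind_{S^*}\le\vec\ind_{S^*}^{\top}\vec W\vec\ind_{S^*}\le c$. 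Hence $\vec p^{\top}\vec y^H\ge\opt$. When $|S^*|\ge 4$ we also record that $\min_{h\in H}p_h\le\tfrac13\sum_{h\in H}p_h\le\tfrac13\opt$ and that every element of $S^*\setminus H$ has profit at most $\min_{h\in H}p_h$.

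Next, $\phi\,\vec y^H$ is feasible for \eqref{eq:nonconvex_relax}. Put $a\define(\vec y^H)^{\top}(\vec W-\vec D)\vec y^H$ and $b\define\vec d^{\top}\vec y^H$; since $\vec W-\vec D$ is non-negative with zero diagonal and $\vec y^H\ge\vec 0$ we have $a,b\ge 0$, and feasibility of $\vec y^H$ gives $a+b\le c$ and $b\le c$. Using $\phi^2=1-\phi$ and $\phi-\phi^2\ge 0$,
\begin{align*}
\phi^2 a+\phi b\;\le\;\phi^2(c-b)+\phi b\;=\;\phi^2 c+(\phi-\phi^2)\,b\;\le\;\phi^2 c+(\phi-\phi^2)c\;=\;\phi c\;\le\;c,
\end{align*}
so $\phi\,\vec y^H$ satisfies the constraint of \eqref{eq:nonconvex_relax}, and it leaves slack at least $(1-\phi)c=\phi^2 c$. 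By \Cref{only_one_frac} we obtain a feasible $\vec z^H$ of \eqref{eq:nonconvex_relax} with at most one fractional coordinate and $\vec p^{\top}\vec z^H\ge\phi\,\vec p^{\top}\vec y^H\ge\phi\,\opt$; the transformation in \Cref{only_one_frac} never alters a coordinate equal to $0$, so the coordinates forced to $0$ in iteration $H$ remain $0$ in $\vec z^H$. If $\vec z^H$ is integral it is feasible for \eqref{eq:problem} (an integral point satisfies the constraint of \eqref{eq:nonconvex_relax} if and only if it satisfies $\vec x^{\top}\vec W\vec x\le c$), so $\bar{\vec z}^H=\vec z^H$ already has value $\ge\phi\,\opt$.

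It remains to treat the case that $\vec z^H$ has a unique fractional coordinate $i_0$. Since $\bar{\vec z}^H=\floor{\vec z^H}$ only decreases coordinates it is feasible for \eqref{eq:problem}, and $\vec p^{\top}\bar{\vec z}^H=\vec p^{\top}\vec z^H-z^H_{i_0}\,p_{i_0}$. To finish I would split on $z^H_{i_0}$. If $z^H_{i_0}$ is large (at least $\phi$), the slack $\phi^2 c$ left over from the scaling, combined with the elementary estimate $z^H_{i_0}\,\nu_{i_0}(\vec z^H)\le v(\vec z^H)$ obtained by isolating the terms of $v(\vec z^H)$ containing $i_0$, makes rounding $i_0$ \emph{up} feasible, giving an integral solution of value $\ge\vec p^{\top}\vec z^H\ge\phi\,\opt$. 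If $z^H_{i_0}<\phi$, the loss is $z^H_{i_0}\,p_{i_0}<\phi\,p_{i_0}$, and since $i_0$ is not a coordinate forced to $0$ we have either $p_{i_0}\le\min_{h\in H}p_h\le\tfrac13\opt$ (when $i_0\notin H$), or $i_0\in H$ is a dominant\hyp profit coordinate, a situation absorbed by the enumerated guesses that place the top one or two elements of $S^*$, respectively $i_0$ itself, into $H$. Combining these regimes, some enumerated $H$ yields $\vec p^{\top}\bar{\vec z}^H\ge\phi\,\opt$, hence so does $\bar{\vec z}^{H^*}$.

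The running time is polynomial: there are $O(n^3)$ sets $H$, each costing one polynomial\hyp time solve of \eqref{eq:convex_relax} by \Cref{lem:convex_relax} and one linear\hyp time application of \Cref{only_one_frac}. The main obstacle is the last step: making the accounting of the profit lost to the single fractional coordinate tight enough to reach the constant $\phi$ exactly. This is precisely where the feasibility slack produced by the $\phi$\hyp scaling must be balanced against the loss incurred by the size\hyp$\le 3$ partial enumeration, and where the golden\hyp ratio identities (notably $\phi^2+\phi=1$, equivalently $\phi(2+\phi)=1+\phi$) are used; I expect the cleanest organization is to treat ``round $i_0$ up, using the slack'' and ``round $i_0$ down, using $p_{i_0}\le\min_{h\in H}p_h$'' as the two complementary cases.
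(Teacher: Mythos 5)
Your outline follows the paper's skeleton (enumerate the three largest-profit optimal items, solve the constrained relaxation \eqref{eq:convex_relax}, scale by $\phi$ into \eqref{eq:nonconvex_relax}, apply \Cref{only_one_frac}, round), but two steps do not hold up. First, your feasibility argument for $\phi\,\vec y^H$ rests on the claim that feasibility of $\vec y^H$ for \eqref{eq:convex_relax} gives $a+b\le c$ with $a=(\vec y^H)^\top(\vec W-\vec D)\vec y^H$ and $b=\vec d^\top\vec y^H$. This is false in general: the relaxation gives $a+(\vec y^H)^\top\vec D\,\vec y^H\le c$ and $b\le c$, and since $b\ge (\vec y^H)^\top\vec D\,\vec y^H$ the sum $a+b$ can exceed $c$ (take $\vec W$ the all-ones $2\times 2$ matrix, $c=2$, $\vec y=(0.7,0.7)$: then $a+b=2.38>2$). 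The conclusion that $\phi\vec y^H$ is feasible is still true via $\phi^2a+\phi b\le(\phi^2+\phi)c=c$, which is exactly the paper's computation, but then the extra slack $(1-\phi)c$ you claim for $v(\vec z^H)$ disappears, and with it the justification of your ``round $i_0$ up'' branch.

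Second, the final profit accounting, which you yourself flag as the open obstacle, is where the approximation factor is actually earned, and your sketched route does not reach $\phi$: in the round-down branch with $z^H_{i_0}<\phi$ you only get $\phi\,\opt-\phi\,p_{i_0}\ge\phi\,\opt-\tfrac{\phi}{3}\opt=\tfrac{2\phi}{3}\opt$, and the subcase $i_0\in H$ is waved away rather than argued; moreover, since you scale the entire vector $\phi\vec y^H$, the coordinates of $H$ (set to $1$) become fractional after scaling and can themselves end up as the surviving fractional coordinate, which your bookkeeping does not cover. The paper's proof sidesteps all of this by a different decomposition: it uses \Cref{lem:set_01} to fix the three largest-profit optimal items $H^*$ to $1$ (and all higher-profit items to $0$), passes to the reduced instance, scales only the reduced relaxation solution by $\phi$, applies \Cref{only_one_frac} there, and always rounds the single fractional item down. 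Because the items of $H^*$ then contribute at full, unscaled value, one gets $\vec p^\top\vec\ind_S\ge\phi\,\vec p^\top\vec x^*+(1-\phi)\sum_{h\in H^*}p_h-p_\ell$, and the surplus satisfies $(1-\phi)\sum_{h\in H^*}p_h\ge 3(1-\phi)\min_{h\in H^*}p_h\ge\min_{h\in H^*}p_h\ge p_\ell$ since $3(1-\phi)>1$ and all items with larger profit were zeroed out; no rounding-up or slack argument is needed. This separation of the unscaled enumerated items from the $\phi$-scaled remainder is the idea missing from your proposal.
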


\begin{proof}
Fix an optimal solution $\vec x^*$ of \eqref{eq:problem}
and define $S^* \define \{i \in [n] : x_i^* = 1\}$.
Since the algorithm iterates over all solutions of size at least three, 
it is without loss of generality for our following arguments to assume 
that $|S^*| \ge 4$.
Let $H^* \subset S^*$ with $|H^*| = 3$ be chosen such that $p_i \leq \min_{h \in H^*} p_h$ 
for all $i \in S^*$ and consider the run of the algorithm when starting with $H^*$.
Let $\bar{H} \define \{i \in [n]\setminus H^* : p_i > \min_{h \in H^*} p_h\}$ and $k \define \card{\bar{H}}$. 
It is without loss of generality to assume that $[n] \setminus (H^* \cup \bar H) = [n-k-3]$.
Consider the packing problem where as additional constraints we have $x_i = 1$ for all $i \in H^*$ and $x_i = 0$ for all $i \in \bar{H}$. 
By Lemma~\ref{lem:set_01}, this packing problem can be written as
\begin{align*}
\begin{split}
\text{maximize} \qquad & \tilde{\vec p}^{\top} \vec x \\
\text{subject to} \qquad &  
\vec x^{\top} \tilde{\vec W} \vec x \le \tilde{c}, \\
&\vec x \in \{0,1\}^{n-k-3},
\end{split}\tag{$\tilde{P}$}\label{Ptilde}
\end{align*}
where $\tilde{\vec W}$ is a symmetric and positive semi-definite matrix. We then have $\vec p^\top \vec x^* = \sum_{h \in H^*} p_i + \tilde{\vec p}^{\top} \tilde{\vec x}^*$ for an optimal solution $\tilde{\vec x}^*$ of \eqref{Ptilde}.

Let $\vec y$ be an optimal solution to the convex relaxation \eqref{eq:convex_relax} of \eqref{Ptilde}. Since \eqref{eq:convex_relax} is a relaxation of \eqref{Ptilde}, we have $\vec p^\top \vec y \geq \vec p^\top \tilde{\vec x}^*$. 
We proceed to show that $\phi \vec y$ is feasible for the non-convex relaxation \eqref{eq:nonconvex_relax} of \eqref{Ptilde}. To this end, we calculate
\begin{align*}
(\phi\vec y)^\top (\tilde{\vec W} - \tilde{\vec D}) (\phi\vec y) + \phi\, \tilde{\vec d}^\top \vec y = \phi^2  \vec y^\top (\tilde{\vec W} - \tilde{\vec D}) \vec y + \phi\, \tilde{\vec d}^\top \vec y
\leq \phi^2 \tilde{c} + \phi \tilde{c}
= \tilde{c},
\end{align*}
where for the inequality we used that $\vec y$ is feasible for the convex relaxation and, 
thus, $\vec y^\top \tilde{\vec W} \vec y \leq \tilde{c}$ and $\tilde{\vec d}^\top \vec y \leq \tilde{c}$. By Lemma~\ref{only_one_frac}, we can transform $\phi\vec y$ into a solution $\vec z$ such that $\tilde{\vec p}^\top \vec z \geq \phi\tilde{\vec p}^{\top} \vec y$ and $\vec z$ has at most one fractional variable $z_{\ell}$ with $\ell \in [n-k-3]$.

Let $S = H^* \cup \{i \in [n-k-3] : z_i = 1\}$ and consider the solution $\vec \ind_S$. We have that
\begin{align*}
\vec \ind_S^\top \vec W \vec \ind_S = \vec \ind_{H^*}^\top \vec W \vec \ind_{H^*} + \vec z^\top \tilde{\vec W} \vec z
\leq  \vec \ind_{H^*}^\top \vec W \vec \ind_{H^*}  + \tilde{c}
= c,
\end{align*}
so that $\vec \ind_S$ is feasible for \eqref{eq:problem}. Moreover, we obtain
{\allowdisplaybreaks
\begin{align*}
\vec p^\top \vec \ind_S &= \sum_{h \in H^*} p_h + \tilde{\vec p}^\top	\vec z - p_{\ell} z_{\ell} \\
&\geq \sum_{h \in H^*} p_h + \phi\tilde{\vec p}^\top \vec y - p_{\ell}\\
&\geq \sum_{h \in H^*} p_h + \phi\tilde{\vec p}^\top \tilde{\vec x}^* - p_{\ell} \\
&= \phi \vec p^\top \vec x^* + (1-\phi) \sum_{h \in H^*} p_h - p_{\ell}\\
&\geq \phi \vec p^\top \vec x^* + (3(1-\phi)-1)\min_{h \in H^*} p_h\\
&\geq \phi \vec p^\top \vec x^*,
\end{align*}
}
establishing the claimed result.
\qed\end{proof}

As a result of \Cref{thm:phi_approximation}, we can derive an upper bound on the optimal value of \eqref{eq:convex_relax}.
This will turn out to be useful when constructing a monotone greedy algorithm in the next section.

\begin{corollary}\label{cor:upper_bound_for_relax}
Let $x^*$ and $y^*$ be optimal solutions to \eqref{eq:problem} and \eqref{eq:convex_relax}, respectively.
Then $p^{\top}y^* \le \frac{2}{\phi}p^{\top} x^*$.
\label{relax_lower_bound}
\end{corollary}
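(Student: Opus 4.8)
The corollary is essentially the analysis of \Cref{thm:phi_approximation} run with the empty guess $H=\emptyset$: there, no variables are forced to~$1$ and no high-profit variables are forced to~$0$, so the relevant convex relaxation is precisely \eqref{eq:convex_relax} itself and its optimum is~$\vec y^*$. I would therefore extract from that proof the one-line observation that $\phi\,\vec y^*$ can be rounded to an \emph{integral} feasible solution of \eqref{eq:problem} whose profit is at least $\phi\,\vec p^\top\vec y^*$ minus the profit of a single discarded item, and then use optimality of~$\vec x^*$ to absorb that single item. (In fact the argument below only uses feasibility, not optimality, of~$\vec y^*$, so the bound holds for every feasible solution of \eqref{eq:convex_relax}.)

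\textbf{Key steps, in order.} First I would check that $\phi\,\vec y^*$ is feasible for the non-convex relaxation \eqref{eq:nonconvex_relax}. Since $\vec y^*$ is feasible for \eqref{eq:convex_relax} we have $(\vec y^*)^\top\vec W\vec y^*\le c$ and $\vec d^\top\vec y^*\le c$, and because $\vec D$ is psd, $(\vec y^*)^\top(\vec W-\vec D)\vec y^*\le(\vec y^*)^\top\vec W\vec y^*\le c$; hence, using $\phi^2+\phi=1$,
\[
(\phi\vec y^*)^\top(\vec W-\vec D)(\phi\vec y^*)+\phi\,\vec d^\top\vec y^* \;=\; \phi^2\,(\vec y^*)^\top(\vec W-\vec D)\vec y^*+\phi\,\vec d^\top\vec y^* \;\le\; (\phi^2+\phi)\,c \;=\; c .
\]
Next I would apply \Cref{only_one_frac} to $\phi\,\vec y^*$ to obtain a feasible solution $\vec z$ of \eqref{eq:nonconvex_relax} with $\vec p^\top\vec z\ge\phi\,\vec p^\top\vec y^*$ and $\card{N_f(\vec z)}\le 1$; write $N_f(\vec z)\subseteq\{\ell\}$, $S\define N_1(\vec z)$, so that $\vec z=\vec\ind_S+z_\ell\vec\ind_\ell$ (the last term being~$0$ when $\vec z$ is already integral). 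Then I would argue that $\vec\ind_S$ is feasible for \eqref{eq:problem}: because $\vec W$ has non-negative entries, $v(\vec x)=\vec x^\top(\vec W-\vec D)\vec x+\vec d^\top\vec x$ is coordinatewise non-decreasing on $\Rp^n$, and $\vec\ind_S\le\vec z$, so $\vec\ind_S^\top\vec W\vec\ind_S=v(\vec\ind_S)\le v(\vec z)\le c$. Consequently $\vec p^\top\vec\ind_S\le\vec p^\top\vec x^*$. Finally, by \Cref{lem:wlog} we may assume $w_{\ell\ell}\le c$, so $\{\ell\}$ is itself feasible for \eqref{eq:problem} and hence $p_\ell\le\vec p^\top\vec x^*$. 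Combining,
\[
\phi\,\vec p^\top\vec y^* \;\le\; \vec p^\top\vec z \;=\; \vec p^\top\vec\ind_S+p_\ell z_\ell \;\le\; \vec p^\top\vec\ind_S+p_\ell \;\le\; 2\,\vec p^\top\vec x^*,
\]
which rearranges to $\vec p^\top\vec y^*\le\frac{2}{\phi}\,\vec p^\top\vec x^*$.

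\textbf{Main obstacle.} The only genuinely delicate points are (i)~that rounding the unique fractional coordinate of $\vec z$ down to~$0$ preserves feasibility for \eqref{eq:problem} — this is exactly where non-negativity of $\vec W$ (equivalently, monotonicity of $v$ on the non-negative orthant) is used — and (ii)~controlling the profit $p_\ell$ of the discarded fractional item, which is handled by the normalization $w_{\ell\ell}\le c$ from \Cref{lem:wlog} together with optimality of $\vec x^*$. Everything else is the feasibility bookkeeping ($\vec D$ psd, $\phi^2+\phi=1$) already present in the proof of \Cref{thm:phi_approximation}.
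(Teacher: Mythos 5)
Your proposal is correct and follows essentially the same route as the paper's proof: scale $\vec y^*$ by $\phi$ (using $\phi^2+\phi=1$) to get feasibility for \eqref{eq:nonconvex_relax}, invoke \Cref{only_one_frac} to obtain $\vec z$ with at most one fractional coordinate and $\vec p^\top \vec z \ge \phi\,\vec p^\top \vec y^*$, and bound the profit of the rounded-down solution plus the single discarded item by $2\,\vec p^\top\vec x^*$. You merely spell out two steps the paper leaves implicit (monotonicity of $v$ on $\Rp^n$ for feasibility of the integral part, and $w_{\ell\ell}\le c$ from \Cref{lem:wlog} to bound $p_\ell$), which is fine.
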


\begin{proof}
Since $y^*$ is feasible for \eqref{eq:convex_relax}, we have
\[(\phi y^*)^{\top}(W-D)(\phi y^*) + d^{\top}(\phi y^*) \le \phi^2 (y^*)^TWy^* + \phi d^{\top}y^* \le (\phi^2 + \phi)c = c.\] 
Therefore, $\phi y^*$  is 
feasible for \eqref{eq:nonconvex_relax}. By \Cref{only_one_frac}, we can transform $\phi y^*$ into a vector $z$ with $p^{\top} z \ge p^{\top}(\phi y^*) = \phi p^{\top} y^*$ and
$\card{N_f(z)} \le 1$. The integral part~$\floor{z}$ of~$z$ is feasible for \eqref{eq:problem}, and thus,
$p^{\top} z \le p^{\top} \floor{z} + \max_{i\in [n]} p_i \le 2p^{\top} x^*$.
We conclude that $p^{\top}y^* \le \frac{1}{\phi}p^{\top}z \le \frac{2}{\phi}p^{\top}x^*$.
\qed\end{proof}

\section{The Greedy Algorithm}
\label{sec:greedy}

In this section we analyze the greedy algorithm and show that, when combined with a partial enumeration scheme in the spirit of Sahni~\cite{Sahni1975},
it is at least a $(1 - \sqrt{3}/e)$-approximation for packing problems with quadratic constraints of type \eqref{eq:problem}. 
We further show that its approximation ratio can be bounded from above by the golden ratio $\phi$.
Even though this approximation ratio is thus not better than the one guaranteed by the golden ratio algorithm (Theorem~\ref{thm:phi_approximation}),
it is worth analyzing it for several reasons. Firstly, it is simple to understand as well as to implement 
and turns out to have a much better running time in practice than the golden ratio algorithm; see the computational results in
\Cref{sec:computational_results}.
And, secondly, the greedy algorithm serves as a main building block to devise a strategyproof mechanism with constant welfare guarantee; see Section~\ref{section:monotone_greedy}.

For a set $S\subseteq [n]$, we write $w(S) \define \vec \ind_S^\top \vec W \vec \ind_S$.
The core idea of the greedy algorithm is as follows.
Assume that we have an initial solution $S \subset [n]$.
Amongst all remaining items in $[n] \setminus S$, we pick an item $i$ that maximizes the ratio between profit gain and weight gain, 
i.e., 
\[
  i \in \argmax_{j\in [n]\setminus S}\; \frac{p_j}{w(S\cup \{j\}) - w(S)}.
\] 
If adding $i$ to the solution set would make it infeasible, i.e., $w(S \cup \{i\}) > c$, then we delete $i$ from $[n]$.
Otherwise, we add $i$ to $S$.
We repeat this process until $[n] \setminus S$ is empty.  

It is known from the knapsack problem that, when starting the greedy algorithm as described above with the empty set as initial set,
then the produced solution can be arbitrarily bad compared to an optimal solution.
However, the greedy algorithm can be turned into a constant-factor approximation by using partial enumeration:
For all feasible subsets $U\subseteq [n]$ with $|U| \le 2$, we run the greedy algorithm starting with $U$ as initial set.
In the end we return the best solution set found in this process; 
see Algorithm \ref{alg:greedy}.

\begin{algorithm}[tb]
\ForEach{$U \subseteq [n]$ with $|U| \le 2$}{
	$S \assign U$, $I \assign [n]$ \;
	\While{$I \setminus S \neq \emptyset$}{
		$i \assign \argmax_{j \in I \setminus S} \frac{p_j}{w(S\cup \{j\} )-w(S)}$\;
		\eIf{$w(S \cup \{i\}) \leq c$}{
			$S \assign S \cup \{i\}$\;
		}{
			$I \assign I \setminus \{i\}$\;
		}
	}
	$S_U \assign S$\;
}
$U^*\assign \argmax\, \{\vec p^{\top} \vec \ind_{S_U} : U \subseteq [n] : \card{U} \le 2\}$\;
\textbf{return} $\ind_{S_{U^*}}$
\caption{Greedy algorithm with partial enumeration\label{alg:greedy}}	
\end{algorithm}

The analysis of the algorithm follows a similar approach as the analysis of Sviridenko~\cite{Sviridenko2004} for the greedy algorithm for maximizing a submodular function under a linear knapsack constraint. The non-linearity of the constraint in our case makes the analysis more complicated, though. 
In order to prove the approximation ratio of the greedy algorithm we need the following two technical lemmas.

\begin{lemma}\label{lem:sequence_bounded}
Let $m\in \N$ and consider the sequence $(\theta_t)_{t\in \N}$ defined by the recursive formula
\[ 1 - \big(m + 2\sqrt{tm} \big)\theta_{t+1} =\sum_{i=1}^t\theta_i, \quad \theta_1 = \frac{1}{m}. \]
Then $\sum_{t=1}^m \theta_t \ge 1 - \frac{\sqrt{3}}{e}$. 
\end{lemma}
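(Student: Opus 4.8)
The plan is to extract an explicit description of the partial sums $\sigma_t \define \sum_{i=1}^t \theta_i$ from the recursion and then estimate $\sigma_m$ from below by comparison with an integral. Write the recursion as $1 - \sigma_t = (m + 2\sqrt{tm})\,\theta_{t+1}$, which together with $\theta_{t+1} = \sigma_{t+1} - \sigma_t$ gives
\begin{align*}
1 - \sigma_{t+1} = (1 - \sigma_t)\left(1 - \frac{1}{m + 2\sqrt{tm} + 1}\right) = (1-\sigma_t)\,\frac{m + 2\sqrt{tm}}{m + 2\sqrt{tm}+1},
\end{align*}
and $1 - \sigma_1 = 1 - \tfrac1m = \tfrac{m-1}{m}$. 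Telescoping, $1 - \sigma_m = \prod_{t=1}^{m-1}\frac{m + 2\sqrt{tm}}{m + 2\sqrt{tm}+1}\cdot\frac{m-1}{m}$, so it suffices to show this product is at most $\sqrt{3}/e$.

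The main step is bounding the logarithm of the product. Using $\log\frac{a}{a+1} = -\log(1 + 1/a) \le -\frac{1}{a+1}$ for $a > 0$, I get
\begin{align*}
\log(1 - \sigma_m) \le -\sum_{t=1}^{m-1}\frac{1}{m + 2\sqrt{tm} + 1} + \log\frac{m-1}{m} \le -\sum_{t=1}^{m-1}\frac{1}{m + 2\sqrt{tm}+1}.
\end{align*}
Now I compare the sum to an integral: since $t \mapsto \frac{1}{m + 2\sqrt{tm}+1}$ is decreasing, $\sum_{t=1}^{m-1}\frac{1}{m+2\sqrt{tm}+1} \ge \int_1^{m}\frac{dt}{m + 2\sqrt{tm}+1}$. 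Substituting $u = \sqrt{t/m}$ (so $t = mu^2$, $dt = 2mu\,du$) turns this into $\int_{1/\sqrt m}^{1}\frac{2mu}{m(1 + 2u) + 1}\,du$, which for the purpose of a clean bound I can lower-bound by $\int_0^1 \frac{2u}{1 + 2u + 1/m}\,du \ge \int_0^1\frac{2u}{2 + 2u}\,du = \int_0^1\left(1 - \frac{1}{1+u}\right)du = 1 - \log 2$. Hence $\log(1-\sigma_m) \le -(1 - \log 2) = \log 2 - 1$, i.e. $1 - \sigma_m \le 2/e$, giving $\sigma_m \ge 1 - 2/e$. Since $2 > \sqrt 3$, this already yields the claim $\sigma_m \ge 1 - \sqrt3/e$ — in fact with room to spare, so the crude lower bounds above are affordable.

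The one subtlety I expect to need care with is the small-$m$ boundary behavior: for $m = 1$ the product is empty and $\sigma_1 = 1 \ge 1 - \sqrt3/e$ trivially, and for small $m$ the replacement of the sum $\sum_{t=1}^{m-1}$ by $\int_1^m$ and the dropping of the $1/m$ and $\log\frac{m-1}{m}$ terms must be checked to still leave enough slack; since the target constant $\sqrt3/e \approx 0.637$ is comfortably larger than $2/e \approx 0.736$... (wait — $2/e \approx 0.736 > \sqrt3/e \approx 0.637$, so I need $1-\sigma_m \le \sqrt3/e$, not merely $\le 2/e$). So in fact the integral estimate must be pushed slightly further: I should retain the factor $\frac{m-1}{m}$ and the sharper integral $\int_{1/\sqrt m}^1\frac{2mu}{m(1+2u)+1}du$, which for all $m \ge 2$ exceeds $\log 2 - \log\sqrt 3 + \log\frac{m}{m-1}$; verifying this inequality (monotone in $m$, with the $m\to\infty$ limit $\int_0^1\frac{u}{1+u}du = 1-\log2 \ge \tfrac12\log(4/3)$, which holds since $e^{2(1-\log 2)} = e^2/4 \approx 1.847 > 4/3$) is the real content and the place to be careful, though it reduces to a one-variable calculus check.
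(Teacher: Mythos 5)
Your telescoping idea is a genuinely different route from the paper's proof (which shows $\sum_{i=1}^t\theta_i\ge\psi(t/m)$ by induction against the concave ODE solution $\psi(x)=1-e^{-\sqrt x}\sqrt{1+2\sqrt x}$), and it can be made to work, but not in the form you propose. First, the product identity is wrong: since $\theta_{t+1}=(1-\sigma_t)/(m+2\sqrt{tm})$, one gets $1-\sigma_{t+1}=(1-\sigma_t)\bigl(1-\tfrac{1}{m+2\sqrt{tm}}\bigr)$, i.e.\ the factor is $\tfrac{a_t-1}{a_t}$ with $a_t=m+2\sqrt{tm}$, not $\tfrac{a_t}{a_t+1}$; your ``equality'' is only a valid upper bound. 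Second, and decisively, that loss is too big for this target, because the estimate has almost no slack: your chain ends with $-\sum_t\tfrac1{a_t+1}$ in the exponent instead of $-\sum_t\tfrac1{a_t}$, a loss of about $\bigl(\tfrac12\log 3-\tfrac13\bigr)/m\approx 0.22/m$, whereas (see below) the correct chain leaves only $O(m^{-3/2})$ of room. Consequently the inequality you plan to verify at the end is false for large $m$: what your chain actually requires is $\int_{1/\sqrt m}^1\tfrac{2mu}{m(1+2u)+1}\,du\ \ge\ 1-\tfrac12\log 3-\log\tfrac{m}{m-1}$ (not $\log 2-\log\sqrt3+\log\tfrac{m}{m-1}$; also the limit of this integral is $\int_0^1\tfrac{2u}{1+2u}\,du=1-\tfrac12\log 3$, not $\int_0^1\tfrac{u}{1+u}\,du=1-\log 2$, so your limiting check is about the wrong integrand). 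At $m=100$ the left-hand side is $\approx 0.4398$ while the right-hand side is $\approx 0.4406$, so your chain only yields $1-\sigma_{100}\le 0.6377>\sqrt3/e\approx 0.6372$, and this failure persists for all $m$ beyond roughly $40$. (A further slip, though in a part you discard: enlarging the integration range from $[1/\sqrt m,1]$ to $[0,1]$ increases the integral, so it is not a lower bound, and even the warm-up estimate $I_m\ge 1-\log 2$ is false for small $m$, e.g.\ $I_2\approx 0.156$.)

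The route is salvageable once the factor is corrected, and it then gives an elementary alternative to the paper's argument. For $m\ge 2$, from $1-\sigma_m=\tfrac{m-1}{m}\prod_{t=1}^{m-1}\bigl(1-\tfrac1{a_t}\bigr)$, the bound $\log(1-x)\le -x$ and the sum--integral comparison for the decreasing map $t\mapsto\tfrac1{m+2\sqrt{tm}}$ give $\log(1-\sigma_m)\le\log\bigl(1-\tfrac1m\bigr)-\int_1^m\tfrac{dt}{m+2\sqrt{tm}}=h(\delta)+\tfrac12\log 3-1$, where $\delta=1/\sqrt m$ and $h(\delta)=\log(1-\delta^2)+\delta-\tfrac12\log(1+2\delta)$. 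Since $h(0)=0$ and $h'(\delta)=\tfrac{2\delta}{1+2\delta}-\tfrac{2\delta}{1-\delta^2}<0$ on $(0,1)$, we get $\log(1-\sigma_m)\le\tfrac12\log3-1$, i.e.\ $\sigma_m\ge 1-\sqrt3/e$; the case $m=1$ is trivial because $\sigma_1=1$. So keep the strategy, but fix the product identity and replace your proposed one-variable check by this one: as submitted, the argument has a genuine gap that no amount of care in the verification you describe can close, because the inequality to be verified is itself false.
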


\begin{proof}
Consider the initial value problem 
\begin{align*}
\psi'(x) = \frac{1 - \psi(x)}{1 + 2\sqrt{x}}, \quad x\in [0,1],\quad \psi(0) = 0.
\end{align*}
Since the function $f\colon [0,1] \times \R \to \R$, $(x,s) \mapsto \frac{1-s}{1 + 2\sqrt{x}}$ is Lipschitz-continuous in~$s$, 
by the Picard-Lindel\"of Theorem, this problem has a unique solution, which is given by 
\[
\psi(x) = 1 - e^{-\sqrt{x}}\sqrt{1 + 2\sqrt{x}}, \quad x\in [0,1].
\] 
Since its first derivative 
\[
\psi'(x) = \frac{e^{-\sqrt{x}}}{\sqrt{1 + 2\sqrt{x}}}, \quad x\in [0,1],
\]
is monotonically decreasing, it follows that $\psi$ is concave.

Define $z_t \define \sum_{i=1}^t\theta_i$, $t\in \{0, \dots, m\}$.
We claim that for every $t \in \{0, \dots, m\}$ we have
\begin{align}
\label{eq:psi}
z_t \ge \psi\big(\tfrac{t}{m}\big).  
\end{align}
Note that \eqref{eq:psi} implies the result using
\[
\sum_{t=1}^m \theta_t = z_{m}\ge \psi(1) = 1 - \frac{\sqrt{3}}{e}.
\] 

To finish the proof, we prove \eqref{eq:psi} by induction. We have $z_0 = 0 = \psi(0)$. 
Now assume that  \eqref{eq:psi} holds for some arbitrary but fixed $t\in \{0,\dots,m-1\}$.
By the recursive definition of $(\theta_t)_{t\in \N}$ and the concavity of $\psi$, it then follows that 
\begin{align*}
z_{t+1} &= z_t + \theta_{t+1} \\
&= z_t + \frac{1 - z_t}{m + 2\sqrt{tm}} \\
&= z_t \left( 1 - \frac{1}{m + 2\sqrt{tm}} \right) + \frac{1}{m + 2\sqrt{tm}} \\
&\ge \psi\big(\tfrac{t}{m}\big)\left( 1 - \frac{1}{m + 2\sqrt{tm}} \right) + \frac{1}{m + 2\sqrt{tm}} \\
&= \psi\big(\tfrac{t}{m}\big) + \frac{1 - \psi(\frac{t}{m})}{m + 2\sqrt{tm}} \\
& = \psi\big(\tfrac{t}{m}\big) + \tfrac{1}{m} \psi'\big(\tfrac{t}{m}\big) \\
&\ge \psi\big( \tfrac{t+1}{m} \big),
\end{align*}
which completes the proof. 
\qed\end{proof}

\begin{lemma}
\label{lem:ratio_bound}
Let $w_0,\dots, w_m\in \N$ with $0 = w_0 < w_1 < \dots < w_m$,  
and let $\theta_i \ge 0$, $i\in [m]$. Then,
\begin{align*}
\sum_{i=1}^m \theta_i (w_i \!-\! w_{i-1}) 
\ge \biggl(1 \!-\! \frac{\sqrt{3}}{e} \biggr) 
\min_{t=0,\dots,m\!-\!1} \sum_{i=1}^t \theta_i(w_i \!-\!w_{i-1}) + \theta_{t+1}(w_m \!+\! 2\sqrt{w_tw_m}).
\end{align*}
\end{lemma}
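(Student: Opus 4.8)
The plan is to reduce Lemma~\ref{lem:ratio_bound} to Lemma~\ref{lem:sequence_bounded} by choosing the "right" quantities to feed into the recursion. Set $a_i \define w_i - w_{i-1} > 0$ for $i \in [m]$, write $W \define w_m = \sum_{i=1}^m a_i$, and let $M \define \min_{t=0,\dots,m-1}\bigl(\sum_{i=1}^t \theta_i a_i + \theta_{t+1}(w_m + 2\sqrt{w_t w_m})\bigr)$ be the value we want to lower-bound $\sum_{i=1}^m \theta_i a_i$ against. The first step is to normalize: if $M = 0$ the claim is trivial since the left-hand side is non-negative (all $\theta_i \ge 0$, all $a_i > 0$), so assume $M > 0$ and, by scaling all $\theta_i$, assume $M = 1$. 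Then for every $t \in \{0,\dots,m-1\}$ we have the inequality $\sum_{i=1}^t \theta_i a_i + \theta_{t+1}(w_m + 2\sqrt{w_t w_m}) \ge 1$.

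The key idea is to compare $(\theta_t)$ with the extremal sequence from Lemma~\ref{lem:sequence_bounded} after a suitable change of variables. Because $w_m + 2\sqrt{w_t w_m}$ is increasing in $w_t$ and $w_t \le w_m$, a worst case should occur when the $a_i$ are "spread out" so that $w_t/w_m \approx t/m$; the cleanest way to capture this is to introduce the auxiliary quantities $\tilde\theta_i \define \theta_i a_i$ (the actual summands) and rephrase the constraint. After substituting $w_t = \sum_{i\le t} a_i$ I would like the constraint to read, roughly, $\sum_{i\le t}\tilde\theta_i + (\tilde\theta_{t+1}/a_{t+1})(w_m + 2\sqrt{w_t w_m}) \ge 1$. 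Define $z_t \define \sum_{i=1}^t \tilde\theta_i$; then $z_t$ plays exactly the role of $z_t$ in the proof of Lemma~\ref{lem:sequence_bounded}, and the constraint gives a lower bound on $z_{t+1} - z_t = \tilde\theta_{t+1}$ in terms of $1 - z_t$ and the coefficient $\kappa_t \define (w_m + 2\sqrt{w_t w_m})/a_{t+1}$. The goal then becomes: show $z_m \ge 1 - \sqrt{3}/e$ given $z_0 = 0$ and $z_{t+1} \ge z_t + (1-z_t)/\kappa_t$ whenever $z_t < 1$ (and the trivial observation that once some $z_t \ge 1$ we are already done, since $z_m \ge z_t$ by monotonicity and $1 - \sqrt3/e < 1$).

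To finish, I would run the same induction as in Lemma~\ref{lem:sequence_bounded} but against the continuous solution $\psi$, using a "time reparametrization" argument: assign to index $t$ the point $x_t \define w_t / w_m \in [0,1]$, so $x_0 = 0$, $x_m = 1$, and $x_{t+1} - x_t = a_{t+1}/w_m$. One checks $\kappa_t = (w_m + 2\sqrt{w_t w_m})/a_{t+1} = (1 + 2\sqrt{x_t})/(x_{t+1}-x_t)$, so $z_{t+1} \ge z_t + (x_{t+1}-x_t)\cdot\frac{1-z_t}{1+2\sqrt{x_t}}$. Since $\psi$ is concave with $\psi'(x) = (1-\psi(x))/(1+2\sqrt x)$ and $\psi' $ is decreasing, the inductive step $z_t \ge \psi(x_t) \Rightarrow z_{t+1} \ge \psi(x_{t+1})$ goes through exactly as before: the increment $(x_{t+1}-x_t)\frac{1-z_t}{1+2\sqrt{x_t}} \ge (x_{t+1}-x_t)\frac{1-\psi(x_t)}{1+2\sqrt{x_t}} = (x_{t+1}-x_t)\psi'(x_t) \ge \psi(x_{t+1}) - \psi(x_t)$, the last step by concavity. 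Hence $z_m \ge \psi(1) = 1 - \sqrt3/e$, and since $z_m = \sum_{i=1}^m \theta_i a_i$ (in the normalized scaling), undoing the normalization gives $\sum_{i=1}^m \theta_i(w_i - w_{i-1}) \ge (1 - \sqrt3/e)\,M$, which is the claim.

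The main obstacle I anticipate is handling the coefficient $\kappa_t$ cleanly when the $a_i$ are not all equal — in particular making sure the reparametrization $x_t = w_t/w_m$ really does linearize the recursion and that the concavity step does not require $x_{t+1}-x_t$ to be small (it does not, since $\psi$ concave gives $\psi(x_{t+1}) - \psi(x_t) \le (x_{t+1}-x_t)\psi'(x_t)$ for arbitrary step sizes). A secondary subtlety is the case analysis when $z_t$ first exceeds $1$: there the assumed inequality on the summands no longer forces $\tilde\theta_{t+1}$ to be positive, but monotonicity of $(z_t)$ already settles it, so this needs only a one-line remark rather than real work.
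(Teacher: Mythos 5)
Your overall route is sound and genuinely different from the paper's. The paper first reduces to the unit-increment case $w_i-w_{i-1}=1$ by replacing each $\theta_i$ with $w_i-w_{i-1}$ copies of itself, then studies the auxiliary minimization problem in the $\theta$-variables, shows by a perturbation argument that at an optimum all constraints are tight, and only then invokes Lemma~\ref{lem:sequence_bounded}. You bypass both the duplication and the tightness argument by rerunning the discrete-versus-continuous induction directly, with the reparametrization $x_t = w_t/w_m$; the identity $(w_m+2\sqrt{w_tw_m})/a_{t+1} = (1+2\sqrt{x_t})/(x_{t+1}-x_t)$ is correct, and since concavity of $\psi$ gives $\psi(x_{t+1})-\psi(x_t)\le (x_{t+1}-x_t)\psi'(x_t)$ for arbitrary step sizes, the non-uniform increments cause no trouble. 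This is arguably cleaner and proves the general statement in one pass, at the price of not reusing Lemma~\ref{lem:sequence_bounded} as a black box (you essentially re-prove a generalized version of it).

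One step is written backwards, though: from $z_t\ge\psi(x_t)$ you get $1-z_t\le 1-\psi(x_t)$, so your claimed chain $(x_{t+1}-x_t)\frac{1-z_t}{1+2\sqrt{x_t}}\ge(x_{t+1}-x_t)\frac{1-\psi(x_t)}{1+2\sqrt{x_t}}$ is false in general --- a larger partial sum forces a \emph{smaller} increment, not a larger one, so you cannot compare increments in isolation. The repair is exactly the device used inside the proof of Lemma~\ref{lem:sequence_bounded}: with $c_t \define \frac{x_{t+1}-x_t}{1+2\sqrt{x_t}}$ the constraint gives $z_{t+1}\ge(1-c_t)\,z_t + c_t$, and since $x_{t+1}-x_t\le 1\le 1+2\sqrt{x_t}$ we have $c_t\in[0,1]$, so the right-hand side is nondecreasing in $z_t$; substituting $z_t\ge\psi(x_t)$ yields $z_{t+1}\ge\psi(x_t)+c_t\bigl(1-\psi(x_t)\bigr)=\psi(x_t)+(x_{t+1}-x_t)\psi'(x_t)\ge\psi(x_{t+1})$. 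With this affine formulation your separate treatment of the case $z_t\ge 1$ also becomes unnecessary, since $\theta_{t+1}\bigl(w_m+2\sqrt{w_tw_m}\bigr)\ge 1-z_t$ holds regardless of the sign of $1-z_t$ and the bound goes through verbatim. The normalization $M=1$ and the trivial case $M=0$ are fine.
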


\begin{proof}
We first show the statement for sequences $0 = w_0 < w_1 < \dots < w_m$ with the additional property that $w_i - w_{i-1} = 1$ for all $i \in [m]$.
For this case, it is to show that 
\begin{align*}
\sum_{i=1}^m \theta_i \ge \bigg( 1 - \frac{\sqrt{3}}{e} \bigg) 
\min_{t=0,\dots,m-1} \sum_{i=1}^t \theta_i + \theta_{t+1}\big(m + 2\sqrt{tm}\big).
\end{align*}
It suffices to show that the optimal value of the following optimization problem is at least
$1 - \sqrt{3}/e$. 
\begin{align}
\begin{split}
\text{minimize} \qquad & \sum_{i=1}^m \theta_i\\
\text{subject to} \qquad & \sum_{i=1}^t \theta_i + \theta_{t+1}(m + 2\sqrt{tm}) \ge 1 \quad \text{for all }t=0,\dots, m-1, \\
& \theta_i \ge 0 , \qquad \text{for all }i =1,\dots, m.
\label{eq:primal}
\end{split}
\end{align}
We claim that every optimal solution to \eqref{eq:primal} satisfies all inequalities with equality. For a contradiction, fix an optimal solution $\theta^*_1,\dots,\theta_m^*$ and suppose there is $s \in \{0,\dots,m-1\}$ such that
\begin{align*}
\sum_{i=1}^s	 \theta_i^* + \theta_{s+1}^*(m + 2\sqrt{sm}) > 1.	
\end{align*}
Choosing the minimal $s$ with this property, we have $\theta_{s+1}^* > 0$.
Let
\begin{align*}
\delta =\min \left\{ \theta_{s+1}^* ,   \frac{\sum\limits_{i=1}^s \theta_i^* + \theta_{s+1}^*(m + 2\sqrt{sm}) - 1}{m+2\sqrt{sm}} \right\},
\end{align*}
and consider the solution $\theta_1',\dots,\theta_m'$ defined as
\begin{align*}
\theta_i' =
\begin{cases}
\theta_i^* & \text{ if } i < s+1,\\  
\theta_i^* - \delta & \text{ if } i = s+1,\\
\theta_i^* + \frac{\delta}{m+2\sqrt{(i-1)m}} & \text{ if } i > s+1.\\
\end{cases}
\end{align*}
We first check that the solution $\theta_1',\dots,\theta_m'$ is feasible. For the inequalities for $t=0,\dots,s-1$, there is nothing to show since the involved variables are not altered. For $t=s$, the inequality is satisfied by the choice of $\delta$. For $t > s$, we obtain
\begin{align*}
\sum_{i=1}^t \theta_i' + \theta'_{t+1}(m + 2\sqrt{tm}) \geq \sum_{i=1}^t \theta_i^* - \delta + \biggl(\theta^*_{t+1} + \frac{\delta}{m + 2 \sqrt{tm}}\biggr)(m + 2\sqrt{tm})\geq 1,
\end{align*}
where for the second inequality we used that $\theta_1^*,\dots,\theta_m^*$ is feasible. Finally, we note that
\begin{align*}
\sum_{i=1}^m \theta_i^* - \sum_{i=1}^m \theta_i' = \delta - \sum_{i=s+2}^m \frac{\delta}{m+2\sqrt{(i-1)m}}
\geq \delta\biggl(1 - \frac{m-1}{m}\biggr)
> 0,
\end{align*}
contradicting the optimality of $\theta^*_1,\dots,\theta^*_m$. We conclude that every optimal solution of \eqref{eq:primal} satisfies all inequalities with equality. The result then follows from \Cref{lem:sequence_bounded}.

It is left to show that the statement holds for arbitrary finite sequences 
$0 = w_0 < w_1 < \dots < w_m$. Fix such a sequence, let $m' \define w_m$, 
and let $\theta_1',\dots, \theta_{m'}$ be such that there are first $w_1 - w_0$ 
copies of $\theta_1$, then $w_2 - w_1$ copies of $\theta_2$, and so on.
We thus obtain
\begin{align*}
\sum_{i=1}^m & \theta_i(w_i - w_{i-1}) = \sum_{i=1}^{m'} \theta_i' \\
&\ge \biggl(1 - \frac{\sqrt{3}}{e} \biggr)\min_{t=0,\dots,m'-1}\sum_{i=1}^{t} \theta_i' + 
\theta'_{t+1}(m' + 2\sqrt{t m'}) \\
&= \biggl(1 - \frac{\sqrt{3}}{e} \biggr)\min_{t=w_0,\dots,w_{m-1}}\sum_{i=1}^{t} \theta_i' + 
\theta'_{t+1}(m' + 2\sqrt{t m'}) \\
&= \biggl(1 - \frac{\sqrt{3}}{e} \biggr)\min_{t=0,\dots,m-1}\sum_{i=1}^{t} (w_i - w_{i-1})\theta_i + 
\theta_{t+1}(w_m + 2\sqrt{t w_m}),
\end{align*}
yielding the result.
\qed\end{proof}

We can now prove the approximation ratio of the greedy algorithm.

\begin{theorem}\label{thm:greedy_approximation}
The Greedy algorithm with partial enumeration 
(Algorithm~\ref{alg:greedy})
is a an approximation algorithm with approximation ratio $(1 - \frac{\sqrt{3}}{e})$ for~\eqref{eq:problem}.	
\end{theorem}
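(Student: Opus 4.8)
The plan is to adapt the partial-enumeration argument of Sahni and Sviridenko to the quadratic setting, with Lemma~\ref{lem:sequence_bounded} and Lemma~\ref{lem:ratio_bound} doing the heavy lifting. Fix an optimal solution $\vec x^*$ with support $S^* = \{i : x_i^* = 1\}$ and value $\opt = \vec p^\top \vec x^*$. If $|S^*| \le 2$ we are done trivially, since the algorithm enumerates all such sets. So assume $|S^*| \ge 3$. The key device is to pick $U^* \subseteq S^*$ with $|U^*| = 2$ consisting of the two items in $S^*$ whose marginal weight contribution is largest (or chosen so that the items examined by greedy before its first rejection are ``cheap'' relative to those in $U^*$); then consider the run of the algorithm that starts with $S = U^*$. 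I would track the greedy run on this branch: let $S_0 = U^*, S_1, S_2, \dots$ be the sequence of sets obtained by successful additions, with $i_t$ the item added at step $t$, and let $i_{t^*+1}$ be the first item that greedy \emph{rejects} because $w(S_{t^*} \cup \{i_{t^*+1}\}) > c$.

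The central estimate is the standard ``greedy covers a $(1-1/e)$-type fraction of the optimum'' bound, but with the exponential replaced by the solution $\psi$ of the ODE in Lemma~\ref{lem:sequence_bounded}, because the marginal weight $w(S\cup\{j\}) - w(S)$ is not additive: adding $j$ to a heavier set costs more, and in fact $w(S_t \cup \{j\}) - w(S_t) \le w_{jj} + 2\sqrt{w(S_t)\, w_{jj}} \cdot(\text{something})$ by Cauchy--Schwarz applied to the completely-positive-like decomposition, which is exactly the source of the $m + 2\sqrt{tm}$ term in the two lemmas. Concretely, I would bound, for each step $t \le t^*$, the profit still ``missing'' relative to $\opt$ in terms of the greedy ratio at step $t$ and the incremental weight, obtaining a recursion of the form $\opt - \vec p^\top\vec\ind_{S_t} \le r_{t+1}\cdot(\text{weight slack})$ where $r_{t+1} = p_{i_{t+1}}/(w(S_t\cup\{i_{t+1}\}) - w(S_t))$ is the greedy ratio; here one uses that every item of $S^*$ not yet taken was \emph{available} and had ratio at most $r_{t+1}$ (this is where choosing $U^*$ to contain the ``expensive'' optimal items matters, so that no item of $S^*$ gets prematurely discarded). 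Summing these inequalities and invoking Lemma~\ref{lem:ratio_bound} with $w_t := w(S_t) - w(U^*)$ (rescaled appropriately so $w_0 = 0$) and $\theta_{t+1} := (\opt - \vec p^\top \vec\ind_{U^*})^{-1}$-normalized profit increments, together with the fact that at the rejection step the weight would have exceeded $c$, yields $\vec p^\top \vec\ind_{S_{t^*}} + p_{i_{t^*+1}} \ge (1 - \sqrt{3}/e)\,\opt$. Finally, since the algorithm also tries the singleton $\{i_{t^*+1}\}$ (or rather, $\vec\ind_{S_{t^*}}$ and $\vec\ind_{\{i_{t^*+1}\}}$ are both feasible and both candidates when their respective starting sets are enumerated), the better of the two has value at least half of $\vec p^\top\vec\ind_{S_{t^*}} + p_{i_{t^*+1}}$ — wait, this gives only $(1-\sqrt3/e)/2$; instead, as in Sviridenko, the rejected item $i_{t^*+1}$ is absorbed into the enumerated starting set, so the branch starting from $U^* \cup \{i_{t^*+1}\}$-type reasoning (accounted for by the $|U| \le 2$ enumeration being applied \emph{after} conceptually including the rejected item) directly delivers the full $(1-\sqrt3/e)$ bound on one candidate set.

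The main obstacle, and the step requiring the most care, is the non-additivity of the weight function: unlike the linear knapsack, $w(S \cup \{j\}) - w(S)$ grows as $S$ grows, so the ``one more item overshoots $c$'' step does not cleanly bound the leftover profit. The correct quantitative handle is the inequality $w(S \cup T) \le \big(\sqrt{w(S)} + \sqrt{w(T)}\big)^2$ for disjoint $S, T$ (valid because $\vec W$ is psd with nonnegative entries, so $\vec\ind_S^\top \vec W \vec\ind_T \le \sqrt{w(S)w(T)}$ by Cauchy--Schwarz on $\vec W^{1/2}$), which is precisely what produces the $w_m + 2\sqrt{w_t w_m}$ appearing in Lemma~\ref{lem:ratio_bound}; making the bookkeeping between the greedy ratios, the partial sums of weights, and the $\theta_t$ of that lemma line up exactly is the delicate part. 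A secondary subtlety is ensuring that no item of $S^*$ is discarded by greedy before step $t^*$, which is guaranteed by the choice of the two enumerated items as the weightiest members of $S^*$ combined with monotonicity of the greedy ratios, and one must check this monotonicity holds despite the growing denominators.
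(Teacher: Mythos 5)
Your overall skeleton matches the paper's proof: track the greedy run started from a two-element subset of the optimal support, bound the missing profit at each step via the greedy ratio together with the inequality $\tilde w(\tilde S^t\cup \tilde S^*)-\tilde w(\tilde S^t)\le \tilde c + 2\sqrt{\tilde w(\tilde S^t)\tilde c}$ (supermodularity plus Cauchy--Schwarz), and feed the resulting system into Lemma~\ref{lem:ratio_bound}. However, there is a genuine gap exactly at the point where you yourself hesitate. The paper's mechanism for not losing a factor beyond $1-\sqrt{3}/e$ is that $U$ is chosen as the \emph{two most profitable} items of $S^*$, and $t^*$ is the first step at which greedy rejects an item \emph{of $S^*$}; hence the rejected item satisfies $p_{i_{t^*}}\le \min_{i\in U}p_i\le \tfrac12\sum_{i\in U}p_i$, and since $\tfrac12\ge 1-\sqrt{3}/e$ the term $\sum_{i\in U}p_i - p_{i_{t^*}}\ge \tfrac12\sum_{i\in U}p_i\ge (1-\sqrt{3}/e)\sum_{i\in U}p_i$ combines with the Lemma~\ref{lem:ratio_bound} bound on $\tilde S^*$ to give the full ratio on the greedy set $S^{t^*-1}$ itself. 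Your proposal instead picks $U^*$ as the two items of largest marginal \emph{weight}, which does nothing to control the profit of the rejected item, and your closing step (``the rejected item is absorbed into the enumerated starting set'') is not justified: the enumeration is only over sets of size at most two, and no branch of the algorithm starts from $U^*\cup\{i_{t^*+1}\}$; taking the better of the greedy set and a singleton, as you note, only yields $(1-\sqrt{3}/e)/2$.

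A secondary error is your stated purpose for the choice of $U^*$, namely that it guarantees ``no item of $S^*$ gets prematurely discarded.'' No choice of starting pair guarantees this, and the paper does not need it: rejections of items outside $S^*\cup S^{t^*}$ are irrelevant (they can be removed from the instance without changing the analysis, as the paper notes), and the analysis simply terminates at the \emph{first} rejection of an $S^*$-item, whose profit is then absorbed as above. Availability of the remaining $S^*$-items at steps $t<t^*$ follows from this definition of $t^*$, not from the composition of $U^*$. With $U$ redefined as the two largest-profit items of $S^*$ and the final accounting done as in the paper (via Lemma~\ref{lem:set_01} to restate the residual problem and the inequality $p_{i_{t^*}}\le\tfrac12\sum_{i\in U}p_i$), your argument would close; as written, the approximation factor is not established.
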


\begin{proof}
Let $x^*$ be an optimal solution of \eqref{eq:problem} and set 
$S^* \define \{i\in [n]: x^*_i = 1\}$. Number the items of $S^* =
\{i_1^*,i_2^*,\dots, i_k^*\}$ such that $p_{i_1^*} \geq p_{i_2^*} \geq \dots \geq p_{i_k^*}$.
Since the algorithm enumerates all solutions with at most two items, it is without loss of generality to assume that $|S^*|\ge 3$. 
Consider the run of the greedy algorithm with $U = \{i_1^*,i_2^*\}$. Without loss of generality, we assume that $i_1^* = n-1$ and $i_2^* = n$. 
Set $S^0\define U$, and for $t = 1,2,\dots$, denote by $S^t$ and $i_t$ the values of $S$ and $i$ after the $t$-th pass of the \texttt{while} loop. 
Furthermore, define 
\[ \theta_{t}\define \frac{p_{i_{t}}}{w(S^{t-1}\cup \{i_{t}\}) - w(S^{t-1})}. \] 

By Lemma~\ref{lem:set_01}, we can treat the problem after fixing $x_{i_1^*} = x_{i_2^*}= 1$ as a new problem of the same form with matrix $\smash{\tilde{\vec W} \in \N^{(n-2) \times (n-2)}}$, profit vector $\smash{\tilde{\vec p}}$, and budget $\smash{\tilde{c}}$. 
In the following, for a set $S \subseteq [n] \setminus U$ we write $\tilde{w}(S) \define \vec \ind_{S}^{\top} \tilde{\vec W} \vec \ind_S$.
Note that $\tilde w$ is supermodular, i.e., for any two sets $S$, $S' \subseteq [n]\setminus U$ we have
\[
\sum_{i\in S'\setminus S} \tilde w(S\cup \{i\}) - \tilde w(S) \le \tilde w(S\cup S') - \tilde w(S).
\] 
By \Cref{lem:wlog}, we can assume without loss of generality that $\tilde w(S\cup \{i\}) - \tilde w(S) > 0$.

Let $t^*$ be the first step of the greedy algorithm for which $i_{t^*} \in S^*$ but
the algorithm does not add $i_{t^*}$ to its solution set. 
It is without loss of generality to assume that in all previous iterations $t \in \{1,\dots,t^*-1\}$ we had $S^{t} = S^{t-1} \cup \{i_t\}$ as otherwise item $i_t$ would be neither contained in the optimal solution nor the solution computed by the greedy algorithm; thus, removing it from the instance would not change the analysis. Since $i_{t^*}$ is not included in the solution, we have $\tilde{w}(S^{t^*-1}\cup \{i_{t^*}\} \setminus U) > \tilde{c}$. 
In the following, we write $S^{t^*} \define S^{t^*-1} \cup \{i_{t^*}\}$,  $\tilde{S}^* \define S^* \setminus U$,
and for $t \in \{0,\dots,t^*\}$, we write $\tilde{S}^t \define S^t \setminus U$.

For all $t \in \{0,\dots,t^*-1\}$, we obtain
\begin{align*}
\sum_{i \in \tilde{S}^*} p_i &\leq \sum_{i \in \tilde{S}^t} p_i + \sum_{i \in \tilde{S}^*\setminus \tilde{S}^t} p_i\\
&= \sum_{i \in \tilde{S}^t} p_i + \sum_{i \in \tilde{S}^*\setminus \tilde{S}^t} \frac{p_i}{\tilde{w}(\tilde{S}^{t} \cup \{i\}) - \tilde{w}(\tilde{S}^{t})}\big(\tilde{w}(\tilde{S}^{t} \cup \{i\}) - \tilde{w}(\tilde{S}^{t})\big)\\
&\leq \sum_{i \in \tilde{S}^t} p_i + \theta_{t+1} \sum_{i \in \tilde{S}^*\setminus \tilde{S}^t} \big(\tilde{w}(\tilde{S}^{t} \cup \{i\}) - \tilde{w}(\tilde{S}^{t})\big)\\
&\leq \sum_{i \in \tilde{S}^t} p_i + \theta_{t+1} \big(\tilde{w}(\tilde{S}^{t} \cup \tilde{S}^*) - \tilde{w}(\tilde{S}^{t})\big),
\end{align*}
where we used the supermodularity of $\tilde{w}$. By the Cauchy-Schwarz inequality it holds that
\begin{align*}
\tilde{w}(\tilde{S}^{t} \cup \tilde{S}^*) - \tilde{w}(\tilde{S}^{t}) &= (\ind_{\tilde{S}^t} + \ind_{\tilde{S}^* \setminus S^t})^\top \tilde{W}	(\ind_{\tilde{S}^t} + \ind_{\tilde{S}^* \setminus \tilde{S}^t}) - w(\tilde{S}^t)\\
&\leq \tilde{w}(\tilde{S}^* \setminus \tilde{S}^t) +2 \sqrt{\tilde{w}(\tilde{S}^t) \tilde{w}(\tilde{S}^*\setminus \tilde{S}^t)}\\
&\leq \tilde{c} + 2\sqrt{\tilde{w}(\tilde{S}^t)\tilde{c}}.
\end{align*}
Thus, we get
\begin{align*}
\sum_{i \in \tilde{S}^*} p_i &\leq \sum_{i \in \tilde{S}^t} p_i + \theta_{t+1}\Bigl( \tilde{c} + 2\sqrt{\tilde{w}(\tilde{S}^t)\tilde{c}} \Bigr) 
\quad \text{for all } t \in \{0,\dots,t^*-1\}. 
\end{align*}
Since $\tilde c < \tilde w(\tilde S^{t^*})$, it follows that
\begin{align*}
\sum_{i \in \tilde{S}^*} p_i 
&\leq \min_{t = 0,\dots,t^*-1}\sum_{i \in \tilde{S}^t} p_i + \theta_{t+1}\Big(\tilde w(\tilde S^{t^*}) + 2\sqrt{\tilde w(\tilde S^{t^*})\tilde w(\tilde S^t})\Big) \nonumber\\
&= \min_{t = 0,\dots,t^*}\sum_{i = 1}^t \theta_i\Big(\tilde w(\tilde S^i) - \tilde w(\tilde S^{i-1}) \Big)
+ \theta_{t+1}\Big(\tilde w(\tilde S^{t^*}) + 2\sqrt{\tilde w(\tilde S^{t^*})\tilde w(\tilde S^t})\Big).
\end{align*}
Furthermore, 
 \begin{align*}
 \sum_{i \in \tilde{S}^{t^*}} p_i = \sum_{i = 1}^{t^*}\theta_i\Big(\tilde w(\tilde S^i) - \tilde w(\tilde S^{i-1}) \Big)
 \end{align*}
 and thus, by Lemma~\ref{lem:ratio_bound}, 
 \[ \sum_{i \in \tilde{S}^{t^*}} p_i \ge \left( 1 - \frac{\sqrt{3}}{e} \right) \sum_{i \in \tilde{S}^*} p_i.  \] 
Finally, this leads to 
\begin{align*}
\sum_{i \in S^{t^*-1}} p_i &= \sum_{i \in U} p_i + \sum_{i \in \tilde{S}^{t^*-1} } p_i\\
&= \sum_{i \in U} p_i + \sum_{i \in \tilde S^{t^*}} p_i - p_{i_{t^*}} \\
&\geq \sum_{i \in U} p_i + \biggl(1- \frac{\sqrt{3}}{e}\biggr)\sum_{i \in \tilde{S}^*} p_i -p_{i_{t^*}} \\
&\geq \sum_{i \in U} p_i + \biggl(1- \frac{\sqrt{3}}{e}\biggr)\sum_{i \in \tilde{S}^*} p_i -\frac{1}{2}\sum_{i \in U} p_i\\
&\geq \biggl(1- \frac{\sqrt{3}}{e}\biggr)\sum_{i \in S^*} p_i.
\end{align*}
Since the greedy algorithm with starting solution $U$ obtains a profit of at least $\sum_{i \in S^{t^*-1}} p_i$, this implies the claimed result.
\qed\end{proof}

We proceed to show that the approximation ratio of the greedy algorithm can be bounded from above by the golden ratio.

\begin{theorem}\label{thm:greedy_upper_bound}
The approximation ratio of the greedy algorithm with partial enumeration is at most $\phi = (\sqrt{5} -1)/2$,
even if we allow partial enumeration over an arbitrary but fixed number of items. 
\label{greedy_upper_bound}
\end{theorem}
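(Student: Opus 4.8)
The plan is to prove the upper bound by an explicit family of lower‑bound instances: for every fixed seed bound $k$ I will exhibit, for $m\to\infty$, an instance on which the greedy algorithm together with partial enumeration over \emph{all} seeds $U$ with $|U|\le k$ returns a solution of value $\phi\,m+O(1)$ while $\mathrm{OPT}=m$, so the ratio tends to $\phi$.

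The instance has $m$ ``spread‑out'' items $A_1,\dots,A_m$ that are pairwise $\vec W$‑orthogonal, each with diagonal entry $m$ and profit $1$, and $m$ ``clashing'' items $B_1,\dots,B_m$ sharing a single common $\vec W$‑direction, each with diagonal entry $1$ and profit $q$ where we set $q = 1-1/\sqrt5$, all cross entries $w_{A_iB_j}=1$, and budget $c=m^2$. Then $\vec\ind_S^\top\vec W\vec\ind_S = m\,r+s^2+2rs$ whenever $S$ contains $r$ items of the first kind and $s$ of the second, and $\vec W$ is positive semi‑definite because $(u,v)^\top\vec W(u,v)=m\|u\|^2+2(\vec\ones^\top u)(\vec\ones^\top v)+(\vec\ones^\top v)^2\ge(\vec\ones^\top u+\vec\ones^\top v)^2$ by Cauchy--Schwarz. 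The large diagonal $m$ on the $A$‑items is forced: psd‑ness caps a cross entry by $\sqrt{\alpha\beta/m}$ once one block is the scaled identity $\alpha I_m$, so only $\alpha=\Theta(m)$ keeps the coupling between the two kinds from vanishing as $m\to\infty$, and without a non‑vanishing coupling greedy would simply recover the optimum.

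First I would check $\mathrm{OPT}=m$, attained by $S=\{A_1,\dots,A_m\}$ (weight exactly $c$): relaxing to real $r,s\ge0$ and substituting $r=(m^2-s^2)/(m+2s)$, the objective $g(s)=(m^2-s^2)/(m+2s)+qs$ has $g'(s)=q-2(m^2+ms+s^2)/(m+2s)^2$, and since $3(m^2+ms+s^2)-(m+2s)^2=(2m+s)(m-s)\ge0$ for $s\le m$ we get $g'(s)\le q-\tfrac23<0$; hence $g$ is decreasing, so no pure‑$B$ or mixed solution beats the $A$‑items. Next I would trace the greedy run from an arbitrary seed $U$ with $|U|\le k$: from any configuration containing only $O(1)$ items a $B$‑item has bang‑per‑buck $\Theta(1)$ whereas an $A$‑item has bang‑per‑buck $\Theta(1/m)$, so greedy keeps adding $B$‑items, and it does so precisely while $s<s^*\approx qm/(2(1-q))$, i.e.\ for $\Theta(m)$ steps; afterwards the $A$‑item bang $1/(m+2s)$ overtakes and, since it no longer changes while the $B$‑item bang only decreases, greedy appends $A$‑items until the budget runs out. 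A direct computation gives total value $q\,s^*+(m^2-(s^*)^2)/(m+2s^*)+O(1)$, and — crucially — the weight $\Theta(m)$ consumed by a seed of $A$‑items cancels exactly against the $A$‑items greedy would otherwise add, so the leading term is seed‑independent. Writing $\mu=1-q$ this value equals $\bigl(\mu+(1-\mu)^2/(4\mu)\bigr)m+O(1)$, and the function $\mu\mapsto\mu+(1-\mu)^2/(4\mu)$ attains its minimum $(\sqrt5-1)/2=\phi$ at $\mu=1/\sqrt5$, which is exactly the value picked out by $q=1-1/\sqrt5$.

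The hard part is juggling two conflicting requirements: the $A$‑ and $B$‑items must be coupled strongly enough that, having stocked up on cheap $B$‑items, greedy cannot cheaply backfill with $A$‑items, yet psd‑ness severely restricts that coupling unless the $A$‑items carry an $\Theta(m)$ diagonal, which is why $c$ scales as $m^2$. The second delicate point is enumeration‑robustness — verifying that seeding with up to $k$ of the optimal ($A$‑)items does not rescue greedy — and the clean reason is the cancellation above: such a seed contributes weight $km$ but, after greedy's forced $\Theta(m)$‑step detour into $B$‑items, contributes the same leading count of $A$‑items as the empty seed. The remaining checks are routine: replacing the irrational $q$ by a nearby rational (costing only $o(1)$ in the ratio) and clearing denominators for the integrality convention; verifying every $B$‑pick stays feasible (it does, since $s^*<m$); and bounding the additive $O(1)$ terms to conclude the ratio is $\phi+O(1/m)$, hence at most $\phi$ in the limit.
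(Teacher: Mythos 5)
Your construction is correct and, after the routine clean-up you flag (rational $q$, integer scaling, floors and the $O(1)$ trailing items), it does prove the claimed upper bound: the checks go through — your $W$ is psd and entrywise non-negative, the continuous bound $g(s)=(m^2-s^2)/(m+2s)+qs$ being decreasing indeed pins OPT at $m$, greedy from any seed of constant size is forced through $\phi m+O(1)$ clash items before the bang-per-buck crossover at $s^*\approx qm/(2(1-q))=\phi m$, the seed's valuable items cancel at leading order, and $\mu+(1-\mu)^2/(4\mu)$ is minimized at $\mu=1/\sqrt{5}$ with value $\phi$. The overall strategy is the same as the paper's (an explicit two-class family on which greedy front-loads cheap items and is then priced out of the optimum, robust to enumeration because seeds have constant size), but the gadget differs: the paper uses $m$ coordinate groups, each containing one valuable item of weight $k$ and $\ell$ unit-weight cheap items that conflict only within their group, so greedy's behaviour follows from two purely local comparisons (cheap item in a group beats its own valuable item; a valuable item in a free group beats cheap items in occupied groups) and the ratio $(1+q^2)/(1+2q)$ with $q=\ell/k$ is minimized at $q=\phi$; you instead use a single global clash direction coupled to every valuable item, with the cheap profit tuned to $q=1-1/\sqrt{5}$, so greedy's switch happens at a global crossover threshold that you must locate and whose seed-independence you must argue. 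Your instance is leaner ($2m$ items, one tuned profit, and a transparent reason why enumeration cannot help), while the paper's trades a larger instance and two parameters for an essentially computation-free greedy trace; it is a pleasant coincidence worth noting that both parameter optimizations land exactly on the golden ratio.
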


\begin{proof}
Consider the following instance. Let $m$, $\ell$, $k\in \N$ with $\ell < k$ and denote by $\ind_i$ the $i$-th unit vector in $\R^m$.
Let there be two types of items: $m$ items of type 1 with profit $p^{(1)} =  1$ and weight vector $y^{(1)}_i = k\ind_i$, $i\in [m]$,
and $m\ell$ type~2 items with profit $p^{(2)} = \frac{1+2\ell}{k^2+2k\ell}$ and weight vector $y^{(2)}_i = \ind_{\lceil \frac{i}{\ell} \rceil }$,
$i\in [m\ell]$; see \Cref{fig:greedy_solution} for an illustration.
\begin{figure}[tb]
\centering
\pgfmathsetmacro{\x}{.2}
\pgfmathsetmacro{\y}{.2}
\begin{tikzpicture}[xscale = .5, yscale = .3]

\draw[dotted] (-1,6) node[left]{$k$}  -- (0+\x,6);
\draw[dotted] (6-\x,4) -- (7,4) node[right]{$\ell$} ;

\draw[fill=white] (0+\x,0) rectangle (1-\x,6);
\draw[fill=white] (1+\x,0) rectangle (2-\x,6);

\draw (2+\x,0) rectangle (3-\x,1);
\draw (2+\x,1) rectangle (3-\x,2);
\draw (2+\x,2) rectangle (3-\x,3);
\draw (2+\x,3) rectangle (3-\x,4);

\draw[fill=white] (2+\x,4) rectangle (3-\x,10);

\draw (3+\x,0) rectangle (4-\x,1);
\draw (3+\x,1) rectangle (4-\x,2);
\draw (3+\x,2) rectangle (4-\x,3);
\draw (3+\x,3) rectangle (4-\x,4);

\draw (4+\x,0) rectangle (5-\x,1);
\draw (4+\x,1) rectangle (5-\x,2);
\draw (4+\x,2) rectangle (5-\x,3);
\draw (4+\x,3) rectangle (5-\x,4);

\draw (5+\x,0) rectangle (6-\x,1);
\draw (5+\x,1) rectangle (6-\x,2);
\draw (5+\x,2) rectangle (6-\x,3);
\draw (5+\x,3) rectangle (6-\x,4);

\node at (.5,-1) {$1$};
\node at (1.5,-1) {$2$};
\node at (3.5,-1) {$\dots$};
\node at (5.5,-1) {$m$};
\draw[thick] (-1,0) -- (7,0);

\end{tikzpicture}
\caption{A partial greedy solution $S = (S_1,S_2)$ with initial set $S^0 = (S^0_1,S^0_2)$, where $S^0_1 = \{1,2\}$ and $S^0_2 = \emptyset$.
The long bars represent type 1 items whereas the short bars represent type 2 items.}
\label{fig:greedy_solution}
\end{figure}
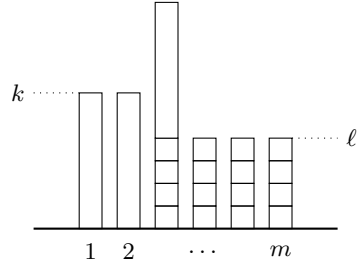

We wish to solve
\begin{align*}
\text{maximize} \qquad & p(S_1,S_2) \define |S_1|\, p^{(1)} +|S_2|\,p^{(2)} \\
\text{subject to} \qquad &
w(S_1,S_2) \define \Big\lVert\sum_{i\in S_1}y_i^{(1)} + \sum_{i\in S_2}y_i^{(2)} \Big\rVert^2_2 \le mk^2, \\
&S_1\subseteq [m], \; S_2\subseteq [m\ell].
\end{align*}
Setting $Y \define [y_1^{(1)},\dots, y_m^{(1)},y_1^{(2)},\dots,y_{m\ell}^{(2)}]$,
this optimization problem can be reformulated as in~\eqref{eq:problem} with 
weight matrix $W = Y^{\top}Y$, which is clearly nonnegative and positive semidefinite.

We first derive the solution produced by the greedy algorithm.
Partition $[m\ell] = \bigcup_{i=1}^{m}T_i$, where $T_i = \{j\in [m\ell]: \lceil \frac{j}{\ell}\rceil = i\}$.
Consider a partial greedy solution $S = (S_1,S_2)$ and assume that $i\notin S_1$ and $j \notin S_2$
for some type 1 item $i\in [m]$ and type 2 item $j\in T_i$. Let $h := |S_2\cap T_i| < \ell$. 
Then we have $w(S_1,S_2\cup\{j\}) - w(S_1,S_2)$ = $(h+1)^2 - h^2 = 1 + 2h$, and thus
\begin{align*}
\frac{p^{(2)}}{w(S_1,S_2\cup\{j\}) - w(S_1,S_2)} &= \frac{1+2\ell}{(k^2+2k\ell)(1+2h)} \\
&> \frac{1+2\ell}{(k^2+2kh)(1+2\ell)} \\
&= \frac{1}{k^2 + 2kh} \\
&= \frac{p^{(1)}}{w(S_1\cup\{i\},S_2) - w(S_1,S_2)}.
\end{align*}
Hence, the greedy algorithm will always include type 2 item $j\in T_i$ before type~1 item $i$ in its solution.

Assume that for a partial solution $S = (S_1,S_2)$ we have $i\in S_1$, $i'\notin S_1$, and $j\notin S_2$ for 
some type 1 items $i$, $i'\in [m]$ and a type 2 item $j\in T_i$. Since $|S_2 \cap T_{i'}| \le \ell$, we have
\begin{align*}
\frac{p^{(1)}}{w(S_1\cup \{i'\},S_2) - w(S_1,S_2)} &\ge \frac{1}{k^2 + 2k\ell} \\
&> \frac{1+2\ell}{(1+2k)(k^2+2k\ell)} \\
&\ge \frac{p^{(2)}}{w(S_1,S_2\cup \{j\})-w(S_1,S_2)}.
\end{align*}
Consequently, the greedy algorithm will always add type 1 item $i'$ before type 2 item $j\in T_i$ to its solution
given that type 1 item $i$ is already included.

Thus, the greedy algorithm starts with some initial solution $S^0 = (S_1^0,S_2^0)$.
Afterwards, it includes all type 2 items in $[m\ell] \setminus \bigcup_{i\in S_1^0} T_i$ (Step 1).
Finally, it adds type 1 items until the capacity bound of $mk^2$ is reached (Step 2).
Let $s \define |S_1^0|$. The weight of the partial solution after Step 1 is given by $sk^2 + (m-s)\ell^2$.
Adding any type~1 item in Step 2 increases the weight of the solution by $k^2 + 2k\ell$.
Hence, in Step 2,
\[ r:= \frac{mk^2 - sk^2 - (m-s)\ell^2}{k^2 + 2k\ell} = \frac{(m-s)(k^2 - \ell^2)}{k^2 + 2k\ell}\]
type 1 items are added until the capacity is reached. (It is without loss of generality to assume that $r\in \Z$ since otherwise after adding $\floor{r}$ type 1 items, the remaining capacity would be filled with type 2 items and the resulting approximation ratio would be even lower.)
Thus, the profit of the solution $\hat S$ produced by the greedy algorithm is given by
\begin{align*}
p(\hat S) &= (s+r)p^{(1)} + (m-s)\ell p^{(2)} \\
&= s + \frac{(m-s)(k^2 - \ell^2)}{k^2 + 2k\ell} + \frac{(m-s)\ell (1+2\ell)}{k^2+2k\ell} \\
&= s + \frac{(m-s)(k^2 + \ell^2 + \ell)}{k^2+2k\ell} \\
&= m\left[ \frac{s}{m} + \frac{ (1 - \frac{s}{m})(1 + \frac{\ell^2}{k^2} + \frac{\ell}{k^2})}{1  + 2\frac{\ell}{k}} \right] \\
&= m\left[ \frac{s}{m} + \frac{(1 - \frac{s}{m}) (1+ q^2 + \frac{q}{k})}{1 + 2q} \right],
\end{align*}
where $q:= \frac{\ell}{k}$.

On the other hand, consider the solution $S^* = (S_1^*,S_2^*)$ with $S_1^* = [m]$ and $S_2^* = \emptyset$.
It fulfills $p(S) = m$ and $w(S) = mk^2$.
Thus, we have
\begin{align*}
\rho(q):= \lim_{k,m\to \infty} \frac{p(\hat S)}{p(S^*)} = \frac{1 + q^2}{1 + 2q}.
\end{align*}
Under the constraint $q\in (0,1)$, the ratio $\rho(q)$ attains its minimum at $q = \phi$ with value $q(\phi) = \phi$, where $\phi = \frac{\sqrt{5}-1}{2}$ is the golden ratio.
\qed
\end{proof}

\section{Monotone Algorithms}
\label{section:monotone_greedy}

To illustrate the need for monotone algorithms, reconsider the situation described in Example~\ref{ex:gas} with a set of $n$ selfish agents requesting permission to send gas through a pipeline.
Each agent~$j$ has a private value $p_j$ expressing the monetary gain from being allowed to send the gas.
A natural objective of a system provider is to maximize social welfare, i.e., to solve \eqref{eq:problem}. Since the true value $p_j$ is the private information of agent~$j$, the system designer has to employ a mechanism that incentivizes the agents to report their true values $p_j$.\footnote{We here make the standard assumption that the true values of the source vertex~$s_j$, the target vertex $t_j$, and the quantity of gas $q_j$ are public knowledge. This is reasonable since these values are physically measurable by the system provider so that misreporting them would be pointless for the agent. This assumption is also frequently made in the knapsack auction literature \cite{Aggarwal2006,Briest2011,Mualem2008}.} It is without loss of generality \cite{Gibbard1973,Myerson1981} to assume the following form of a direct revelation mechanism. The mechanism elicits a (potentially misrepresented) bid $p_j'$ from each agent $j$ and computes a solution $x(p') \in \{0,1\}^n$ to \eqref{eq:problem} based on these values. Further, the mechanism computes a payment $g_j$ for each agent~$j$. The utility of agent~$j$, when their true valuation is $p_j$ and the agents report $p'$, is then $p_j x_j(p') - g_j(p')$. The mechanism is strategyproof if truthtelling is a dominant strategy of each agent~$j$ in the underlying game where each agent chooses a value to report.

Myerson~\cite{Myerson1981} shows that an algorithm $\mathcal{A}$ can be turned into a strategyproof mechanism if and only if it is monotone in the following sense. Let $x(p')$ denote the feasible solution to \eqref{eq:problem} computed by $\mathcal{A}$ as a function of the reported valuations. Then $\mathcal{A}$ is monotone if for all agents~$j$ the function $x_j(p')$ is nondecreasing in $p'_j$ for all fixed values $p_i'$ with $i \neq j$. For a monotone algorithm, charging every agent~$j$ with $x_j(p') = 1$ the critical bid $\inf\, \{z \in \R_{\geq 0} : x_j(z,p'_{-j}) = 1\}$ and charging all other agents nothing yields a strategyproof mechanism. Here, $x_j(z,p_{-j}')$ denotes the binary variable $x_j$ as a function of the bid of agent~$j$, when the bids $p'_{-j}$ of the other agents are fixed.

We note that the algorithms designed in Sections~\ref{cf_approx} and \ref{sec:greedy} are unlikely to be monotone, since the partial enumeration schemes in both of them are not monotone. On the other hand, without the enumeration scheme, they do not provide a constant approximation, even when $W$ is a diagonal matrix.
However, by combining ideas from both algorithm, we derive a monotone algorithm with constant approximation guarantee.

\begin{algorithm}[tb]
$\vec y^* \assign$ solution of \eqref{eq:convex_relax}\;
\If{$\max\limits_{i\in [n]}p_i \ge \bigl(1 - \frac{\sqrt{3}}{e} \bigr) / \bigl(1 + \frac{4}{\sqrt{5}-1} \bigr)\, p^{\top} \vec y^*$}{
\textbf{return} $\vec\ind_{i^*}$ for $i^* \in \smash{\argmax\limits_{i\in [n]}\, p_i}$\;}
\Else{\textbf{return} solution of Greedy algorithm without partial enumeration.}	
\caption{Monotone greedy algorithm\label{algo:monotone_greedy}}
\end{algorithm}

\begin{theorem}
Algorithm \ref{algo:monotone_greedy} is a monotone $\alpha$-approximation algorithm for \eqref{eq:problem}, where $\alpha = \smash{\bigl(1 - \frac{\sqrt{3}}{e} \bigr) / \bigl(1 + \frac{4}{\sqrt{5}-1} \bigr) \approx 0.086}$. The corresponding critical payments can be computed in polynomial time.
\label{thm:monotone_greedy}
\end{theorem}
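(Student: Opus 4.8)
The plan is to establish the two claims of the theorem — monotonicity and the approximation ratio — separately, and then address the computability of the critical payments. Throughout, fix the reports $p'$ of all agents; since $\vec W$ and the structural data are public, only the profit vector varies, and I write the solution of Algorithm~\ref{algo:monotone_greedy} as $\vec x(p')$. Denote by $\beta \define \bigl(1 - \tfrac{\sqrt 3}{e}\bigr)/\bigl(1 + \tfrac{4}{\sqrt 5 - 1}\bigr)$ the threshold constant, noting $\beta = \alpha$.

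For the \emph{approximation ratio}, let $\vec x^*$ be an optimal solution of \eqref{eq:problem} and $\vec y^*$ an optimal solution of \eqref{eq:convex_relax}. There are two cases according to the branch taken. If the algorithm returns $\vec\ind_{i^*}$ with $p_{i^*} = \max_i p_i$, then by the branch condition $p_{i^*} \ge \beta\, \vec p^\top \vec y^* \ge \beta\, \vec p^\top \vec x^*$, since $\vec y^*$ is feasible for the convex relaxation of \eqref{eq:problem} and hence $\vec p^\top\vec y^* \ge \vec p^\top\vec x^*$; this gives an $\alpha$-approximation. (I should also check that $\vec\ind_{i^*}$ is feasible for \eqref{eq:problem}, which holds by \Cref{lem:wlog}: $w_{i^*i^*}\le c$.) If instead the algorithm enters the \texttt{else} branch, then $\max_i p_i < \beta\,\vec p^\top\vec y^*$, and I want to show the greedy algorithm \emph{without} partial enumeration — i.e.\ greedy started from $U = \emptyset$ — achieves profit at least $\bigl(1-\tfrac{\sqrt3}{e}\bigr)\vec p^\top\vec x^* - \max_i p_i$, or something of this flavor. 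The argument of \Cref{thm:greedy_approximation} with $U = \emptyset$ goes through verbatim up to the final chain of inequalities: one obtains $\sum_{i\in \tilde S^{t^*}} p_i \ge \bigl(1-\tfrac{\sqrt3}{e}\bigr)\sum_{i\in S^*}p_i$ with $\tilde S^* = S^*$ and $U=\emptyset$, hence the greedy solution $S^{t^*-1}$ has profit at least $\bigl(1-\tfrac{\sqrt3}{e}\bigr)\vec p^\top\vec x^* - p_{i_{t^*}} \ge \bigl(1-\tfrac{\sqrt3}{e}\bigr)\vec p^\top\vec x^* - \max_i p_i$. Now combine with \Cref{cor:upper_bound_for_relax}, which gives $\vec p^\top\vec y^* \le \tfrac{2}{\phi}\vec p^\top\vec x^* = \bigl(1 + \tfrac{4}{\sqrt5-1}\bigr)\vec p^\top\vec x^*$ after using $\tfrac{2}{\phi} = 1 + \tfrac{2}{\phi}-1$ and $\tfrac1\phi = \tfrac{\sqrt5+1}{2}$; in the \texttt{else} branch $\max_i p_i < \beta\,\vec p^\top\vec y^* \le \beta\bigl(1+\tfrac{4}{\sqrt5-1}\bigr)\vec p^\top\vec x^* = \bigl(1-\tfrac{\sqrt3}{e}\bigr)\vec p^\top\vec x^*$, so
\[
\vec p^\top\vec\ind_{S^{t^*-1}} \ \ge\ \Bigl(1-\tfrac{\sqrt3}{e}\Bigr)\vec p^\top\vec x^* - \max_i p_i \ >\ 0,
\]
and I still need to squeeze the clean factor $\alpha$ out of this — the cleanest way is to bound $\max_i p_i \le \beta\,\vec p^\top\vec y^*$ and observe that in \emph{both} branches the returned profit is at least $\max_i p_i \ge \ldots$; more carefully, one shows the returned value is always $\ge \alpha\,\vec p^\top\vec y^* \ge \alpha\,\vec p^\top\vec x^*$ by noting that the greedy-without-enumeration value is $\ge \bigl(1-\tfrac{\sqrt3}{e}\bigr)\vec p^\top\vec x^* - \max_i p_i \ge (1-\tfrac{\sqrt3}{e} - \alpha(1+\tfrac4{\sqrt5-1}))\vec p^\top \vec x^*$, and by the definition of $\alpha$ one has $\alpha(1+\tfrac4{\sqrt5-1}) = 1-\tfrac{\sqrt3}{e} - \alpha$, giving exactly $\alpha\,\vec p^\top\vec x^*$. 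This reconciliation of the constants is the one genuinely fiddly computation.

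For \emph{monotonicity}, I must show that for each agent $j$, if $\vec p$ and $\hat{\vec p}$ differ only in coordinate $j$ with $\hat p_j > p_j$, and $x_j(\vec p) = 1$, then $x_j(\hat{\vec p}) = 1$. The key structural points are: (i) the optimal value $\vec p^\top\vec y^*(\vec p)$ of \eqref{eq:convex_relax} is a nondecreasing, convex, piecewise-linear function of $p_j$ with slope $y^*_j \in [0,1] \le 1$ (the slope being at most $1$ is what makes the threshold test behave monotonically) — here I can appeal to standard parametric-LP/convex-programming sensitivity, or argue directly that increasing $p_j$ by $\Delta$ increases the optimum by at most $\Delta$ since any feasible $\vec y$ has $y_j\le 1$; (ii) in the \texttt{if}-branch the output coordinate $x_{i^*} = 1$ for the $\argmax$ index, and raising $p_j$ only helps $j$ stay/become the argmax; (iii) the greedy algorithm without partial enumeration is itself monotone in each $p_j$ — this is the standard fact (cf.\ Mu'alem–Nisan) that a greedy procedure that orders items by a profit/weight-type density ratio and greedily packs is monotone, because raising $p_j$ only moves $j$ earlier in the greedy order and never hurts $j$. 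The delicate part is the \emph{interaction} between the two branches: I need that as $p_j$ increases through the threshold where the algorithm switches from the \texttt{else} branch to the \texttt{if} branch, the value $x_j$ does not drop from $1$ to $0$. Since the \texttt{if}-branch triggers precisely when $j$ (or some other item) has become large, and if $j$ itself is the large item then $x_j = 1$ in the \texttt{if}-branch; the residual worry is that some \emph{other} item $i^* \ne j$ becomes the argmax and the algorithm outputs $\vec\ind_{i^*}$, discarding $j$. I expect this to be the \textbf{main obstacle}, and I anticipate the resolution is a monotonicity-preserving tie-breaking/threshold argument: one shows that if $j$ was selected by greedy-without-enumeration at report $p_j$, then at any larger $\hat p_j$ either $j$ is still selected by greedy (if the \texttt{else} branch still applies) or $\hat p_j$ itself exceeds $\beta\,\vec p^\top\vec y^*$, forcing $j = \argmax$ and $x_j = 1$; the crossover case where a different item's largeness triggers the \texttt{if}-branch cannot cause $x_j$ to fall, because that can only happen at a report of agent $j$ that is \emph{below} the one at which $j$ was originally chosen (increasing $p_j$ cannot make another item relatively larger). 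Making this case analysis airtight — in particular handling the boundary of the threshold inequality and ensuring a consistent tie-breaking rule in the $\argmax$ and in greedy — is where the real work lies.

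For the \emph{critical payments}: by Myerson's characterization recalled in the text, a monotone algorithm yields a strategyproof mechanism by charging each winning agent $j$ its critical bid $\inf\{z \ge 0 : x_j(z, p'_{-j}) = 1\}$. To compute this in polynomial time, I fix $p'_{-j}$ and analyze $x_j(\cdot, p'_{-j})$ as a function of $z$. As $z$ ranges over $[0,\infty)$, the algorithm's behavior changes only at finitely many breakpoints: the threshold inequality $\max_i p_i \ge \beta\,\vec p^\top\vec y^*$ is a comparison between two piecewise-linear functions of $z$ with polynomially many pieces (the optimal value of \eqref{eq:convex_relax} being piecewise linear in $z$ with breakpoints computable via the binary-search-over-$P$ procedure of \Cref{lem:convex_relax}), and the greedy order changes at the polynomially many values of $z$ at which $j$'s density ratio ties another item's. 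On each such interval $x_j$ is constant, so one computes $x_j$ at one sample point per interval and takes the infimum of the interval endpoints where $x_j$ flips to $1$. Since there are polynomially many intervals and each evaluation of the algorithm is polynomial, the critical payment is computable in polynomial time.
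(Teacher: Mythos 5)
Your overall route is the same as the paper's: case split on which branch fires, use the greedy analysis without enumeration to get a bound of the form $\bigl(1-\frac{\sqrt3}{e}\bigr)\vec p^\top\vec x^* - \max_i p_i$, absorb the loss via \Cref{cor:upper_bound_for_relax}, and prove monotonicity by combining (a) the slope-at-most-one sensitivity of the relaxation optimum, (b) monotonicity of the single-item branch, and (c) monotonicity of adaptive greedy. Two concrete issues, though. First, your constant bookkeeping is wrong as written: $\frac{2}{\phi} = \frac{4}{\sqrt5-1} = \sqrt5+1$, \emph{not} $1+\frac{4}{\sqrt5-1}$, and by the definition of $\alpha$ one has $\alpha\bigl(1+\frac{4}{\sqrt5-1}\bigr) = 1-\frac{\sqrt3}{e}$, \emph{not} $1-\frac{\sqrt3}{e}-\alpha$. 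These two slips cancel, so your final bound is right, but the correct chain is $\max_i p_i < \alpha\,\vec p^\top\vec y^* \le \alpha\,\frac{4}{\sqrt5-1}\,\vec p^\top\vec x^* = \bigl(1-\frac{\sqrt3}{e}-\alpha\bigr)\vec p^\top\vec x^*$, whence the greedy value is at least $\alpha\,\vec p^\top\vec x^*$. Second, the branch-crossover obstacle you flag is resolved exactly as you anticipate, and this is what the paper does: if only $p_j$ increases (by an integer $\Delta$), then $q^*\le\hat q^*\le q^*+\Delta$, so no item $i\ne j$ can newly pass the threshold (its profit is unchanged while $\alpha\hat q^*\ge\alpha q^*$), and if the \texttt{if}-branch newly fires it is because of $j$, which is then the unique argmax; conversely, if $j$ won the \texttt{if}-branch before, then $\hat p_j\ge\alpha q^*+\Delta\ge\alpha(q^*+\Delta)\ge\alpha\hat q^*$ keeps it winning. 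As written, your monotonicity section is an announced plan rather than a proof; also, your appeal to the ``standard'' Mu'alem--Nisan fact is not quite enough here, since the greedy density ratios are adaptive (they depend on the current set $S$), so you need the short coupling argument: until $j$ is added, the two runs coincide, $j$'s ratio only increases while all others are unchanged, and feasibility of adding $j$ earlier follows from monotonicity of $w$.

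The payment computation is where your argument genuinely deviates and has a gap: the claim that the optimal value of \eqref{eq:convex_relax} and the greedy order have only polynomially many breakpoints as functions of $j$'s bid $z$ is unjustified — the relaxation value is an integer step function whose number of jumps can be pseudo-polynomial (of order $\sum_i p_i$), and the set-dependent tie points of the greedy ratios are not obviously polynomially many. You do not need any of this: once monotonicity of $z\mapsto x_j(z,p'_{-j})$ is established and profits are integral, the critical bid lies in $\{0,1,\dots\}$ up to a polynomially bounded range of encoding length, so plain binary search with one run of the algorithm per probe computes it in polynomial time, which is the paper's (one-line) argument.
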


\begin{proof}
We first prove the approximation ratio.
For $\phi \define \frac{\sqrt{5}-1}{2}$ and $\rho \define 1 - \frac{\sqrt{3}}{e}$, we have $\alpha = \nicefrac{\rho}{1+\frac{2}{\phi}}$.
Let $p^*$ and $q^*$ be the optimal values of Problems \eqref{eq:problem} and \eqref{eq:convex_relax}, respectively.
Since \eqref{eq:convex_relax} is a relaxation of \eqref{eq:problem} we have that $q^* \ge p^*$. 
If $p_i\ge \alpha q^*$ for some $i\in [n]$ it follows that $p^{\top}\vec\ind_i \ge \alpha p^*$. 

Assume that $p_i < \alpha q^*$ for all $i\in [n]$, and
let $x$ be the solution computed by the greedy algorithm (without partial enumeration). 
Following the proof of \Cref{thm:greedy_approximation} we see that
$p^{\top}x \ge \rho p^* - p_j$
for some item $j\in [n]$. Since $p_j < \alpha q^*$, and by \Cref{cor:upper_bound_for_relax} we have
$q^* \le \frac{2}{\phi}p^*$, we obtain
\begin{align*}
p^{\top} x \ge \left(\rho - \frac{2\alpha}{\phi}\right)p^* = \alpha p^*.
\end{align*}
 
Next, we prove the monotonicity of the algorithm.
To this end, let $p$, $\hat p\in \N^n$ be two declared profit vectors such that there is $i \in [n]$ with $\hat p_i = p_i + 1$ and $\hat p_j = p_j$ for all $j \neq i$.
Let $x$ and $\hat x$ be the corresponding solutions computed by Algorithm~\ref{algo:monotone_greedy} and assume that $x_i = 1$. It is to show that $\hat x_i = 1$.
Let $q^*$ and $\hat q^*$ be the optimal values of \eqref{eq:convex_relax} with respect to $p$ and $\hat p$. 
Then $q^* \le \hat q^* \le q^* +1$. 
Let $H:= \{j\in [n] : p_j \ge \alpha q^*\}$ and $\hat H:= \{j\in [n] : \hat p_j \ge \alpha \hat q^*\}$.

First, assume that $H\neq \emptyset$. Since by assumption $x_i = 1$, it follows that $p_i \ge \alpha q^*$ and
$p_i = \max_{j\in [n]}p_j$.
Therefore, 
\[\hat p_i = p_i + 1 \ge \alpha q^* + 1 \ge \alpha (q^* +1) \ge \alpha \hat q^*, \]
and thus $\hat H \neq \emptyset$. Furthermore, $i$ is the only item in $\argmax_{j\in [n]}\hat p_j$ and hence $\hat x_i = 1$.

Next, assume that $H = \emptyset$. Then either $\hat H = \{i\}$ and thus $\hat x_i = 1$, or $\hat H = \emptyset$. 
In the latter case, Algorithm \ref{algo:monotone_greedy} executes the greedy algorithm for both $p$ and $\hat p$. 
But since for every $S\subseteq [n] \setminus \{i\}$
\[ \frac{\hat p_i}{w(S\cup \{i\})-w(S)} > \frac{p_i}{w(S\cup\{i\})-w(S)} \] 
and for every $j\in [n]\setminus \{i\}$ and every $S\subseteq [n] \setminus \{j\}$
\[ \frac{\hat p_j}{w(S\cup \{j\})-w(S)} = \frac{p_j}{w(S\cup\{j\})-w(S)}, \] 
when the greedy algorithm adds item $i$ to its solution after $k$ iterations for $p$, then
it also adds $i$ to its solution after at most $k$ iterations for $\hat p$.

The critical payments can be computed with binary search.
\qed\end{proof}

\section{Constantly Many Packing Constraints}
\label{sec:randomized}

In this section we generalize Problem \eqref{eq:problem} by allowing a constant number of convex quadratic constraints
and derive a constant-factor approximation algorithm using randomized rounding combined with partial enumeration. 
To this end, let $r\in \N$ be a constant natural number, and for every $k\in [r]$ let 
$\smash{\vec W^k = (w^k_{ij})_{i,j\in [n]}\in \N^{n\times n}}$ be a symmetric psd matrix with non-negative entries.
Furthermore, let $\smash{\vec p \in \N^n}$ and $\smash{c^k \in \N}$, $k\in [r]$.
We consider packing problems with~$r$ convex quadratic knapsack constraints of the form
\begin{align}
\begin{split}
\text{maximize} \qquad & \vec p^{\top} \vec x \\
\text{subject to} \qquad & 
\vec x^{\top} \vec W^k \vec x \le c^k \quad \text{ for all } k\in [r],\\
&\vec x \in \{0,1\}^n.
\end{split}
\label{multi_problem}\tag{$P^k$}
\end{align}

Denote by $\smash{\vec d^k}$ the vector consisting of the diagonal elements of $\smash{\vec W^k}$.
We obtain the following convex relaxation of \eqref{multi_problem},
\begin{equation}
\begin{aligned}
\text{maximize} \qquad & \vec p^\top \vec x \\
\text{subject to}  \qquad & 
\vec x^{\top} \vec W^k \vec x \le c^k & \text{ for all } k\in [r],\\
&(\vec d^k)^{\top} \vec x \le c^k & \text{ for all } k\in [r],\\
&\vec x \in [0,1]^n.
\end{aligned}
\tag{$R^k$}\label{multi_relax}
\end{equation}

For $\eps > 0$ we call a solution $y$ of \eqref{multi_relax} $\eps$-optimal if $p^{\top}y \ge (1-\eps)q^*$,
where~$q^*$ is the optimal value of \eqref{multi_relax}. Convex
problems of type \eqref{multi_relax} can be solved $\eps$-optimally in polynomial time by interior points methods \cite{nesterov1994interior}.

\begin{lemma}
For every $\eps > 0$ the relaxation \eqref{multi_relax} can be solved $\eps$-optimally in polynomial time. 
\end{lemma}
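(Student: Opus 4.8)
The plan is to reduce the claim to a standard fact about interior point methods for convex programming, after verifying that the feasible region of \eqref{multi_relax} is a well-behaved convex set and the objective is linear. First I would observe that the feasible region
\[
\mathcal{F} \define \bigl\{ \vec x \in [0,1]^n : \vec x^\top \vec W^k \vec x \le c^k,\ (\vec d^k)^\top \vec x \le c^k \text{ for all } k \in [r] \bigr\}
\]
is a convex set, since each $\vec W^k$ is positive semi-definite and hence each function $\vec x \mapsto \vec x^\top \vec W^k \vec x$ is convex, while the remaining constraints are linear. The objective $\vec p^\top \vec x$ is linear, so \eqref{multi_relax} is a genuine convex program. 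I would also note that $\mathcal{F}$ is nonempty (it contains $\vec 0$) and bounded (it is contained in $[0,1]^n$), so the optimum $q^*$ is attained.

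Next I would invoke the machinery of Nesterov and Nemirovskii~\cite{nesterov1994interior}: a convex program whose constraints admit a polynomial-time computable self-concordant barrier can be solved to within additive accuracy $\eta$ of the optimum in time polynomial in the input size and in $\log(1/\eta)$. The quadratic constraints $c^k - \vec x^\top \vec W^k \vec x \ge 0$ and the linear constraints $x_i \ge 0$, $1 - x_i \ge 0$, $c^k - (\vec d^k)^\top \vec x \ge 0$ all carry standard self-concordant barriers (the logarithmic barrier for the linear inequalities, and the barrier $-\log\bigl(c^k - \vec x^\top \vec W^k \vec x\bigr)$ for the convex quadratic ones), and since $r$ is a constant the total barrier parameter is $O(n)$. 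The technical point to handle carefully is passing from additive accuracy $\eta$ to the multiplicative $(1-\eps)$-approximation required by the $\eps$-optimality definition: here I would use that all data are integral (as stipulated in the Preliminaries), so $q^*$ is either $0$ — in which case $\vec x = \vec 0$ already certifies it — or bounded below by a quantity like $1/\mathrm{poly}$ (in fact $q^* \ge \min_i p_i \ge 1$ once one checks feasibility of a single suitable item, or more conservatively $q^* \ge 0$ trivially and one treats the $q^* = 0$ case separately). Setting $\eta \define \eps$ then gives $p^\top y \ge q^* - \eta \ge q^* - \eps \ge (1-\eps) q^*$ when $q^* \ge 1$, and the degenerate case is immediate.

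The main obstacle I expect is the accuracy-normalization bookkeeping: one must argue that a solution guaranteed only to be within additive $\eta$ of $q^*$, and only approximately feasible (interior point methods return a point in the interior of $\mathcal{F}$, or within $\eta$ of it, depending on the exact formulation), can be massaged into a bona fide feasible point of \eqref{multi_relax} with the stated multiplicative guarantee. The clean way to deal with this is to run the barrier method on a slightly shrunk feasible region or to round the returned near-feasible point back into $\mathcal{F}$ and bound the resulting objective loss; since $\mathcal{F}$ has nonempty interior and the data are polynomially bounded, this loss can be absorbed into $\eps$ by choosing the internal accuracy parameter small enough — still only $\log(1/\eps)$ iterations. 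Everything else — convexity of the constraints, boundedness, existence of barriers, polynomiality in $n$ for constant $r$ — is routine and can be cited directly.
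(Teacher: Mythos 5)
Your proposal takes essentially the same route as the paper, which proves this lemma simply by appealing to interior point methods of Nesterov and Nemirovskii~\cite{nesterov1994interior} for the convex program \eqref{multi_relax}; your additional verification of convexity and the accuracy bookkeeping only make explicit what the paper leaves implicit. One small caution in your normalization step: the optimum $q^*$ of the relaxation need not satisfy $q^* \ge 1$ (no single item need be feasible at level $x_i = 1$, only at some fractional level), so the additive-to-multiplicative conversion should instead use a lower bound of the form $q^* \ge 2^{-\mathrm{poly}(\text{input size})}$ whenever $q^* > 0$, which the barrier method absorbs since its running time depends only logarithmically on the required accuracy.
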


\begin{algorithm}[tb]
$\vec y \assign$ $\eps$-optimal solution of \eqref{multi_relax}\;
\Repeat{$x$ is feasible for \eqref{multi_problem}}{
$x \assign $ realization of $\ber(\alpha y)$
}
\textbf{return} $x$\;
\caption{Randomized rounding\label{algo:randomized_rounding}}
\end{algorithm}

We proceed to derive an approximation algorithm based on solving \eqref{multi_relax}. For some fixed value $\delta \in (0,1)$, we call items~$i$ with $w_{ii}^k \leq \delta c^k$ for all $k \in [r]$ \emph{$\delta$-light}. All other items are called \emph{$\delta$-heavy}. 
We first assume that all items are $\delta$-light and devise a randomized constant-factor approximation algorithm for 
\eqref{multi_problem} based on randomized rounding; see Algorithm \ref{algo:randomized_rounding}. 
To that end, for some vector \mbox{$y\in [0,1]^n$}, denote by $\ber(y)$ the vector of stochastically independent binary random variables
$\vec X = (X_1,\dots, X_n)^{\top}$ with the property $\Pr[X_i = 1] = y_i$ and $\Pr[X_i = 0] = 1-y_i$, for $i\in [n]$. 

\begin{lemma}\label{lem:randomized_rounding}
Let $\delta \in (0,1)$ and assume that all items $i \in [n]$ are $\delta$-light. 
Let $\eps\in (0,1)$, $p^*$ be the optimal value of \eqref{multi_problem}, $y$ be an $\eps$-optimal solution of \eqref{multi_relax}, $\alpha\in (0,1)$, and
$\vec X = \ber(\alpha y)$.
Then, $\E[p^{\top}X \mid X \text{ is feasible}] \ge f(\alpha,\delta) (1-\eps)p^*$, 
where 
$f(\alpha,\delta) = \alpha\big(1 - g(\alpha,\delta)\big)^r$ and $g(\alpha,\delta) = \alpha\bigl( 1 + (1+ \delta^{\frac{1}{3}})^3 \bigr) + (1-\alpha)\delta$.
\end{lemma}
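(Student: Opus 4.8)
The plan is to analyze the randomized rounding procedure by conditioning on feasibility, using a union-bound-type argument together with the Paley--Zygmund inequality (or a direct second-moment computation) to lower-bound the probability that a single rounding produces a feasible point, and to combine this with linearity of expectation for the objective. The key quantities to control are, for each $k \in [r]$, the probability that $\vec X^\top \vec W^k \vec X > c^k$, where $\vec X = \ber(\alpha \vec y)$ and $\vec y$ is feasible for \eqref{multi_relax}.

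First I would fix $k$ and estimate $\E[\vec X^\top \vec W^k \vec X]$. Splitting the quadratic form into diagonal and off-diagonal parts and using independence, $\E[\vec X^\top \vec W^k \vec X] = \sum_i d_i^k \alpha y_i + \sum_{i \neq j} w_{ij}^k \alpha^2 y_i y_j \le \alpha (\vec d^k)^\top \vec y + \alpha^2 \vec y^\top \vec W^k \vec y \le (\alpha + \alpha^2) c^k$, using the two constraint families in \eqref{multi_relax}. This is not quite enough for a Markov bound to beat $1$, so the $\delta$-lightness must be exploited to sharpen the tail estimate. The natural route is to bound the tail probability $\Pr[\vec X^\top \vec W^k \vec X > c^k]$ by splitting $\vec X^\top \vec W^k \vec X$ into its diagonal contribution $D^k \define \sum_i d_i^k X_i$ and the off-diagonal contribution, bounding $\Pr[D^k > t c^k]$ for a suitable threshold $t$ via a Chernoff/Bernstein bound that uses $d_i^k \le \delta c^k$ for all $i$ (this is where $\delta^{1/3}$ and the exponent $3$ enter: one optimizes over a free parameter, e.g. writing the threshold as $t = 1 - (1+\delta^{1/3})^{-1}$-type split or applying Markov to $(\vec X^\top \vec W^k \vec X)^{1/3}$ after a Cauchy--Schwarz-style factorization of the off-diagonal part as in the greedy analysis). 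The off-diagonal term $\sum_{i\neq j} w_{ij}^k X_i X_j$ can be bounded in terms of $D^k$ via Cauchy--Schwarz on the vectors $\vec q^e$ in the completely-positive-style decomposition of $\vec W^k$, giving $\vec X^\top \vec W^k \vec X \le$ a function of $D^k$ and $c^k$, so that the whole event reduces to a tail event for the single nonnegative random variable $D^k$ with bounded summands. Carrying out this optimization should yield $\Pr[\vec X^\top \vec W^k \vec X > c^k] \le g(\alpha,\delta)$ with the stated $g$.

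Then, by the union bound over the $r$ constraints and independence-free reasoning (just subadditivity), $\Pr[\vec X \text{ infeasible}] \le r\, g(\alpha,\delta)$; but to get the cleaner product form $f(\alpha,\delta) = \alpha(1-g(\alpha,\delta))^r$ I would instead argue per-constraint: since each coordinate $X_i$ is present independently with probability $\alpha y_i$, one shows $\E[p^\top \vec X \mid \vec X \text{ feasible}] \ge \alpha\, \vec p^\top \vec y \cdot \prod_{k} \Pr[\text{constraint } k \text{ holds} \mid \dots]$, or more robustly one uses the FKG/positive-correlation structure of the events $\{X_i = 0\}$ to get that conditioning on a subset of feasibility events only helps. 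Concretely: $\E[p^\top \vec X \cdot \ones_{\vec X \text{ feasible}}] \ge \E[p^\top \vec X] - \sum_{i} p_i \alpha y_i \Pr[\vec X \text{ infeasible} \mid X_i = 1]$, and then bound each conditional infeasibility probability by $1 - (1-g(\alpha,\delta))^r$ using that fixing one more coordinate to $1$ changes the relevant tail bounds in a controlled way (monotonicity of the quadratic form in each coordinate). Dividing by $\Pr[\vec X \text{ feasible}] \le 1$ and using $\vec p^\top \vec y \ge (1-\eps) q^* \ge (1-\eps) p^*$ gives the claim.

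The main obstacle I anticipate is the tail bound for the quadratic form: a naive Markov inequality on $\vec X^\top \vec W^k \vec X$ gives only $\alpha + \alpha^2 < 1$-type bounds that are too weak once combined over $r$ constraints, so the $\delta$-lightness hypothesis must be leveraged through a moment or exponential bound, and getting precisely the advertised $g(\alpha,\delta) = \alpha(1 + (1+\delta^{1/3})^3) + (1-\alpha)\delta$ — in particular the appearance of the cube root and the cube — requires choosing the right decomposition of the off-diagonal part and the right free parameter in the Bernstein/Markov step; everything else (linearity of expectation for the objective, the conditioning-is-harmless argument, the union over $r$ constraints) is routine by comparison.
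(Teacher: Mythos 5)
Your overall framing matches the paper's: write $\E[\vec p^\top \vec X \mid \vec X \text{ feasible}] = \sum_\ell p_\ell \Pr[X_\ell=1]\Pr[\vec X \text{ feasible}\mid X_\ell=1]/\Pr[\vec X\text{ feasible}]$, drop the denominator, use FKG to factor the conditional feasibility into a product over the $r$ constraints, and finish with $\vec p^\top \vec y \ge (1-\eps)p^*$; that part of your plan is sound and is essentially what the paper does. The genuine gap is the core quantitative step, which you leave at the level of ``carrying out this optimization should yield $g$'': you never establish the per-constraint bound, and the route you sketch for it both targets the wrong event and uses $\delta$-lightness in the wrong place. What is needed (and what your own conditioning identity requires) is a bound on the \emph{conditional} probability $\Pr[\vec X^\top \vec W^k \vec X > c^k \mid X_\ell = 1]$. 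For the \emph{unconditional} event, plain Markov with $\E[\vec X^\top \vec W^k\vec X]\le(\alpha+\alpha^2)c^k$ is already at least as good as $g(\alpha,\delta)\ge 2\alpha$ and needs no lightness at all (the paper uses exactly this later, for the $\Pr[\text{infeasible}]\le \nicefrac12$ bound); the difficulty is entirely in the conditioning, because setting $X_\ell=1$ adds the cross terms $2\sum_i w^k_{i\ell}X_i + w^k_{\ell\ell}$, which are not controlled by the diagonal entries. Hence your plan of a Chernoff/Bernstein bound on the diagonal part $D^k=\sum_i d_i^k X_i$ (using $d_i^k\le\delta c^k$ for \emph{all} $i$) plus a Cauchy--Schwarz reduction of the off-diagonal part to $D^k$ cannot produce the required inequality: an item can be $\delta$-light while its row $w^k_{\cdot\ell}$ is large, and the bound $\sum_{i\neq j}w_{ij}X_iX_j\le(\sum_i\sqrt{w_{ii}}X_i)^2$ obtainable from $w_{ij}\le\sqrt{w_{ii}w_{jj}}$ is far too lossy to be compared to $c^k$.

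The paper's mechanism is different and much more elementary than a concentration bound: it uses $\delta$-lightness only for the single conditioned item $\ell$. Setting $\vec z$ equal to $\vec y$ except $z_\ell=1$, one proves the deterministic claim $\vec z^\top\vec W\vec z\le\min_{\gamma\in(0,1)}\bigl[\delta/(1-\gamma)^2+1/\gamma^2\bigr]c=(1+\delta^{1/3})^3c$ via the Cholesky factorization $\vec W=\vec U^\top\vec U$ and a two-case comparison of $\abs{\vec u_i^\top\vec z}$ against $\abs{\vec u_i^\top\vec y}$ and $\abs{u_{i\ell}}$ (this optimization over $\gamma$ is the sole source of the cube root and the cube, not a Bernstein parameter or a Markov bound on a $1/3$-power). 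Then a direct first-moment computation gives $\E[\vec X^\top\vec W\vec X\mid X_\ell=1]\le\alpha\,\vec z^\top\vec W\vec z+\alpha\sum_{i\neq\ell}w_{ii}y_i+(1-\alpha)w_{\ell\ell}\le g(\alpha,\delta)c$, and ordinary Markov yields $\Pr[\vec X^\top\vec W\vec X\le c\mid X_\ell=1]\ge 1-g(\alpha,\delta)$. Without this conditional, per-item argument (or a worked-out substitute), your proof does not go through as written.
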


\begin{proof}
By Bayes' theorem we have
\begingroup\allowdisplaybreaks
\begin{align}
\E[\vec p^\top \vec X \mid \vec X \text{ feasible}] & = \sum_{\ell\in [n]}p_\ell\, \E[X_\ell\mid \vec X \text{ feasible}] \nonumber\\
&= \sum_{\ell\in [n]}p_\ell\, \Pr[X_\ell = 1 \mid \vec X \text{ feasible}] \nonumber\\
&= \sum_{\ell\in [n]}p_\ell\, \frac{ \Pr[X_\ell =1]\, \Pr[\vec X \text{ feasible} \mid X_\ell=1]}{\Pr[\vec X \text{ feasible}]} \nonumber\\
&\ge \alpha\sum_{\ell\in [n]}p_\ell\, y_\ell\,\Pr[\vec X^\top \vec W^k \vec X \le c^k \text{ for all }k\in [r] \mid X_\ell = 1] \nonumber\\
&\ge \alpha\sum_{\ell\in [n]}p_\ell\, y_\ell\, \prod_{k=1}^r\Pr[\vec X^\top \vec W^k \vec X \le c^k\mid X_\ell = 1], \label{expected_obj}
\end{align}%
\endgroup
where the last inequality follows from the monotonicity of the functions $\vec X \mapsto \vec X^\top \vec W^k \vec X$  
with respect to the natural partial order on $\R^n$ 
and the FKG inequality; see \cite{fortuin1971correlation}.
In the following, we show that for every $\ell\in [n]$ and $k\in [r]$ we have
\begin{equation}\label{pr_w_le_c}
\Pr[\vec X^\top \vec W^k \vec X \le c^k  \mid X_{\ell} = 1] \ge 1-g(\alpha,\delta).
\end{equation}
Combining \eqref{expected_obj} and \eqref{pr_w_le_c}, and using that $p^{\top}y \ge (1-\eps)p^*$ we then obtain
\[
\E[\vec p^\top \vec X \mid \vec X \text{ feasible\,}] \ge \alpha\, p^{\top}y\, (1-g(\alpha,\delta))^r = f(\alpha,\delta)\, \vec p^{\top}{\vec y} 
\ge f(\alpha, \delta) (1-\eps)\vec p^*,
\]
and we are finished.

We proceed as follows. Let $\ell\in [n]$ and $k\in [r]$.
Define ${\vec z} \in \R^n$ by $ z_i = y_i$ for all $i\in [n]\setminus \{\ell\}$ and $ z_{\ell} = 1$.
We derive an upper bound on ${\vec z}^\top \vec W^k {\vec z}$ and then use this bound to prove
that $\E[\vec X^\top  \vec W^k \vec X \mid X_{\ell} = 1] \le g(\alpha,\delta)\,c^k$. Using Markov's inequality yields \eqref{pr_w_le_c}. 
For the sake of readability, throughout the rest of the proof we omit the superscripts and simply write $\vec W = \vec W^k$ and $c = c^k$. 

\begin{claim}
We have
\begin{align}
{\vec z}^\top \vec W {\vec z} \le \min_{\gamma\in (0,1)}\left[\frac{\delta}{(1-\gamma)^2} + \frac{1}{\gamma^2}\right] c = \bigl(1+\delta^{\frac{1}{3}}\bigr)^3c. \label{upper_bound}
\end{align}
\end{claim}

\begin{proof}[of the claim]
Let $\vec W = \vec U^{\top} \vec U$, with $\vec U = (u_{ij})_{i,j\in [n]}$, be the Cholesky decomposition of $\vec W$ 
and denote by $\vec u_i\in \R^n$, $i\in [n]$, the rows of $\vec U$. It follows that
\begin{align*}
{\vec z}^{\top} \vec W {\vec z} = {\vec z}^{\top} \vec U^{\top} \vec U {\vec z} = \norm{\vec U {\vec z}}^{2} = \sum_{i\in [n]} (\vec u_i^{\top} {\vec z})^2.
\end{align*}

Let $\gamma\in (0,1)$ and $i\in [n]$. There are two possible cases. 
Either $\abs{\vec u_i^{\top}{\vec y}} \ge \gamma \abs{\vec u_i^{\top}{\vec z}}$ and thus $\abs{\vec u_i^{\top} {\vec z}} \le \frac{1}{\gamma} \abs{\vec u_i^{\top}{\vec y}}$.
Or we have $\abs{\vec u_i^{\top}{\vec y}} < \gamma \abs{\vec u_i^{\top}{\vec z}}$. 
But then, 
\begin{align*}
\abs{u_i^{\top} z} = \abs{u_i^{\top} y + (1 -  y_{\ell})u_{i\ell}}
\le \abs{u_i^{\top} y} + (1 -  y_{\ell})\abs{u_{i\ell}}
< \gamma \abs{u_i^{\top} z} + \abs{u_{i\ell}}.
\end{align*}
Hence, 
\[ \abs{u_i^{\top} z} < \frac{1}{1 - \gamma} \abs{u_{i\ell}} = \frac{1}{1 - \gamma} \abs{u_i^{\top}\vec\ind_{\ell}}. \] 
It follows that in any of the two cases we have 
\begin{align*}
(u_i^{\top} z)^2 \le \frac{1}{(1-\gamma)^2}(u_i^{\top}\vec\ind_{\ell})^2 + \frac{1}{\gamma^2}(u_i^{\top} y)^2.
\end{align*}
Using that $w_{\ell\ell} \le \delta c$, we conclude that
\begin{align*}
 z^{\top}W z &= \sum_{i\in [n]}(u_i^{\top} z)^2 
\le \frac{1}{(1-\gamma)^2}\sum_{i\in [n]}(u_i^{\top}\vec\ind_{\ell})^2 + \frac{1}{\gamma^2}\sum_{i\in [n]}(u_i^{\top} y)^2 \\
&= \frac{\vec\ind_{\ell}^{\top} W\vec\ind_{\ell}}{(1-\gamma)^2} + \frac{\vec y^{\top}W\vec y}{\gamma^2}
\le \frac{\delta c}{(1-\gamma)^2} + \frac{c}{\gamma^2} \\
&= \left[\frac{\delta}{(1-\gamma)^2} + \frac{1}{\gamma^2}\right] c.
\end{align*}
Applying standard calculus we see that the function $\smash{\gamma\mapsto\frac{\delta}{(1-\gamma)^2} + \frac{1}{\gamma^2}}$ attains its minimal value $(1+\delta^{\frac{1}{3}})^3$ at
$\gamma = ( 1+\delta^{\frac{1}{3}})^{-1}$.
This completes the proof of the claim.
\qed \end{proof}

Using the bound \eqref{upper_bound} we proceed to derive an upper bound on the expected value of $\vec X^{\top}W \vec X$ conditioned on $X_{\ell} = 1$. 
To that end, let $N_{\ell}\define [n]\setminus \{\ell\}$.
We have
\begin{align}
\E[&\vec X^{\top} W \vec X \mid X_{\ell} = 1] \nonumber\\
&= \E\left[ \sum_{i, j\in N_{\ell}}w_{ij}X_iX_j + 2\sum_{i\in N_{\ell}}w_{i\ell}X_i + w_{\ell\ell} \right] \nonumber\\
&= \sum_{i, j\in N_{\ell}: \atop i\neq j}w_{ij}\E[X_i]\E[X_j] + \sum_{i\in N_{\ell}}w_{ii}\E[X_i^2] + 2\sum_{i\in N_{\ell}}w_{i\ell}\E[X_i] + w_{\ell\ell} \nonumber\\
&= \alpha^2 \sum_{i, j\in N_{\ell}: \atop i\neq j } w_{ij}y_iy_j 
+ \alpha\sum_{i\in N_{\ell}}w_{ii}y_i + 2\alpha \sum_{i\in N_{\ell}}w_{i\ell}y_i + \alpha w_{\ell\ell}  + (1-\alpha)w_{\ell\ell}, \nonumber\\
&\le \alpha\Big(\sum_{i, j\in N_{\ell}: \atop i\neq j } w_{ij}y_iy_j + 2\sum_{i\in N_{\ell}}w_{i\ell}y_i + w_{\ell\ell}\Big) + 
\alpha\sum_{i\in N_{\ell}}w_{ii}y_i  + (1-\alpha)w_{\ell\ell}, \nonumber \\
&= \alpha \vec z^{\top} \vec W \vec z  + \alpha \sum_{i\in N_{\ell}}w_{ii}y_i + (1-\alpha)w_{\ell\ell}, \label{upperbound_Ew}
\end{align}
where the inequality follows from $\alpha\in (0,1)$.
Since $y$ is a feasible solution of \eqref{multi_relax}, we have
\begin{align}
\sum_{i\in N_{\ell}}w_{ii}y_i \le c. \label{feas_sol_bound}
\end{align}
Plugging \eqref{upper_bound}, \eqref{feas_sol_bound}, and $w_{\ell\ell} \le \delta c$ into \eqref{upperbound_Ew} yields
\begin{align*}
\E[\vec X^{\top}\vec W \vec X \mid X_{\ell} = 1] \le \alpha (1+\delta^{\frac{1}{3}})^3c + \alpha c + (1-\alpha)\delta c = g(\alpha, \delta) c.
\end{align*}
Therefore, by Markov's inequality,
\begin{align*}
\Pr[\vec X^{\top}\vec W \vec X \le c &\mid X_{\ell} = 1] \\
&= 1 - \Pr[\vec X^{\top}\vec W\vec X  > c \mid X_{\ell} = 1] \nonumber \\
&\ge 1 - \Pr\left[\vec X^{\top}\vec W\vec X  \ge \frac{\E[\vec X^{\top}\vec W\vec X \mid X_{\ell} =1]}{g(\alpha, \delta)}
 \mid X_{\ell} = 1\right] \nonumber \\
&\ge 1 - g(\alpha, \delta).
\end{align*}
This establishes Inequality \eqref{pr_w_le_c} and completes the proof.
\qed\end{proof}

In order to maximize the approximation guarantee of Algorithm \ref{algo:randomized_rounding}, we need to find
$\alpha\in (0,1)$ that maximizes $f(\alpha,\delta)$. 

\begin{lemma}\label{lem:best_alpha}
For every $\delta\in (0,1)$ the function $(0,1)\to \R$, $\alpha\mapsto f(\alpha, \delta)$ attains its maximum at 
\begin{align*}
\alpha_{\delta} \define \frac{1 - \delta}{(r+1)\left[ 1 - \delta + (1 + \delta^{\frac{1}{3}})^3 \right]},
\end{align*}
and it holds that
\[\lim_{\delta \to 0} f(\alpha_{\delta},\delta) = \frac{1}{2(r+1)}\left( \frac{r}{r+1} \right)^r \ge \frac{1}{2e(r+1)}. \]
\end{lemma}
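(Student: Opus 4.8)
The plan is to exploit the fact that $g(\cdot,\delta)$ is \emph{affine}. Writing $B \define 1-\delta+(1+\delta^{1/3})^3$, which is strictly positive for $\delta\in(0,1)$, one has $g(\alpha,\delta)=\alpha\bigl(1+(1+\delta^{1/3})^3\bigr)+(1-\alpha)\delta=\delta+\alpha B$, and hence
\[
f(\alpha,\delta)=\alpha\,\bigl(1-\delta-\alpha B\bigr)^r .
\]
Note that the lower bound of \Cref{lem:randomized_rounding} is only meaningful (and its derivation, via the chain $\prod_k\Pr[\cdot]\ge\prod_k(1-g)$, is only valid) when $g(\alpha,\delta)\le 1$, i.e.\ when $\alpha\le(1-\delta)/B$; I would therefore maximize $f(\cdot,\delta)$ over the interval $\bigl(0,(1-\delta)/B\bigr]$.

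Next I would differentiate, obtaining
\[
\pd{f}{\alpha}(\alpha,\delta)=\bigl(1-\delta-\alpha B\bigr)^{r-1}\bigl[(1-\delta)-(r+1)\,\alpha B\bigr].
\]
On $\bigl(0,(1-\delta)/B\bigr)$ the factor $(1-\delta-\alpha B)^{r-1}$ is positive, so the sign of the derivative equals that of the affine function $(1-\delta)-(r+1)\alpha B$, which is positive for $\alpha<\alpha_\delta$ and negative for $\alpha>\alpha_\delta$, where $\alpha_\delta=\frac{1-\delta}{(r+1)B}$. Since $\alpha_\delta=\frac{1}{r+1}\cdot\frac{1-\delta}{B}<\frac{1-\delta}{B}\le 1$, this critical point lies in the admissible range, and $f(\cdot,\delta)$ is strictly increasing before it and strictly decreasing after it (with $f$ vanishing at both endpoints $0$ and $(1-\delta)/B$); hence $\alpha_\delta$ is the unique maximizer, which is exactly the claimed formula.

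For the limit I would first evaluate $f$ at $\alpha_\delta$ in closed form: from $g(\alpha_\delta,\delta)=\delta+\alpha_\delta B=\delta+\frac{1-\delta}{r+1}$ one gets $1-g(\alpha_\delta,\delta)=(1-\delta)\frac{r}{r+1}$, so
\[
f(\alpha_\delta,\delta)=\frac{1-\delta}{(r+1)B}\Bigl((1-\delta)\tfrac{r}{r+1}\Bigr)^{\!r}
=\frac{(1-\delta)^{r+1}\,r^r}{(r+1)^{r+1}\bigl(1-\delta+(1+\delta^{1/3})^3\bigr)} .
\]
As $\delta\to 0$ we have $\delta^{1/3}\to 0$, hence $(1+\delta^{1/3})^3\to 1$, the denominator tends to $2(r+1)^{r+1}$ and the numerator to $r^r$, giving $\lim_{\delta\to0}f(\alpha_\delta,\delta)=\frac{r^r}{2(r+1)^{r+1}}=\frac{1}{2(r+1)}\bigl(\frac{r}{r+1}\bigr)^r$. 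Finally, $\bigl(\frac{r}{r+1}\bigr)^r=\bigl(1+\frac1r\bigr)^{-r}\ge e^{-1}$ because $\bigl(1+\frac1r\bigr)^r<e$ for every $r\ge1$, which yields $\frac{1}{2(r+1)}\bigl(\frac{r}{r+1}\bigr)^r\ge\frac{1}{2e(r+1)}$. The whole argument is elementary single-variable calculus together with a limit; the only point needing attention is to restrict the optimization to $\alpha\le(1-\delta)/B$, since outside this range $\alpha(1-g)^r$ is no longer a valid lower bound and a naive unconstrained analysis of $f$ would be misleading.
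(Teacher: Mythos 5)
Your proof is correct and is essentially the paper's argument made explicit: the paper simply invokes ``standard calculus'' for the maximizer and continuity of $f$ and $\alpha_\delta$ for the limit, while you carry out the differentiation of $f(\alpha,\delta)=\alpha(1-\delta-\alpha B)^r$ and evaluate $f(\alpha_\delta,\delta)$ in closed form before letting $\delta\to 0$. Your added caveat that the maximization should be restricted to $\alpha\le(1-\delta)/B$ (i.e.\ where $g(\alpha,\delta)\le 1$) is a sensible refinement the paper glosses over -- for even $r$ the literal unconstrained claim over $(0,1)$ would fail near $\alpha=1$ -- and it does not affect how the lemma is used, since only $\alpha=\alpha_\delta$ is ever employed.
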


\begin{proof}
The fact that the function $(0,1)\to \R$, $\alpha\mapsto f(\alpha,\delta)$ attains its maximum value 
at 
$\alpha_{\delta}$
can be verified using standard calculus.
By the continuity of $f$ and  $\alpha_{\delta}$ it follows that
\[ \lim_{\delta\to 0}f(\alpha_{\delta},\delta) = f(\alpha_0,0) = \frac{1}{2(r+1)}\left( \frac{r}{r+1} \right)^r \ge \frac{1}{2e(r+1)},   \] 
which completes the proof.
\qed\end{proof}
We proceed to show that for this $\alpha$, the probability that the random vector $X = \ber(\alpha y)$ produced by Algorithm \ref{algo:randomized_rounding}
is infeasible for \eqref{multi_problem} can be bounded from above by $\tfrac{1}{2}$.

\begin{lemma}
Let $y$ be an optimal solution to \eqref{multi_relax}, $\alpha\in (0,1)$, and $\vec X = \ber(\alpha y)$.
Then $\Pr[X \text{ infeasible for \eqref{multi_problem}}] \le r(\alpha^2 + \alpha)$. 
In particular, if $\alpha = \alpha_{\delta}$, then
$\Pr[X \text{ infeasible for \eqref{multi_problem}}] \le \frac{1}{2}$.
\end{lemma}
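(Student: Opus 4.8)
The plan is to use a union bound over the $r$ constraints together with Markov's inequality, so that the whole argument reduces to a one-line expectation estimate per constraint (no conditioning is needed, unlike in \Cref{lem:randomized_rounding}).

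First I would fix $k \in [r]$ and compute the unconditional expectation of the quadratic form. Since the $X_i$ are independent $\{0,1\}$-valued with $\E[X_i] = \E[X_i^2] = \alpha y_i$, independence gives $\E[X_i X_j] = \alpha^2 y_i y_j$ for $i \neq j$, and therefore
\[
\E\bigl[\vec X^\top \vec W^k \vec X\bigr] = \alpha^2 \sum_{i \neq j} w^k_{ij}\, y_i y_j \;+\; \alpha \sum_{i \in [n]} w^k_{ii}\, y_i \;\le\; \alpha^2\, \vec y^\top \vec W^k \vec y \;+\; \alpha\, (\vec d^k)^\top \vec y \;\le\; (\alpha^2 + \alpha)\, c^k .
\]
Here the first inequality only uses $w^k_{ij}, y_i \geq 0$ (to drop the nonnegative diagonal terms $w^k_{ii} y_i^2$), and the second uses that $\vec y$ is feasible for \emph{both} the quadratic constraint $\vec y^\top \vec W^k \vec y \le c^k$ and the auxiliary linear constraint $(\vec d^k)^\top \vec y \le c^k$ of \eqref{multi_relax}; this is precisely the point of having included those linear constraints in the relaxation.

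Next I would invoke Markov's inequality: since $\vec X^\top \vec W^k \vec X \ge 0$, we get $\Pr[\vec X^\top \vec W^k \vec X > c^k] \le \Pr[\vec X^\top \vec W^k \vec X \ge c^k] \le \alpha^2 + \alpha$, the degenerate case $c^k = 0$ being immediate since then the displayed bound forces $\vec X^\top \vec W^k \vec X = 0$ almost surely. A union bound over $k \in [r]$ then gives $\Pr[\vec X \text{ infeasible for \eqref{multi_problem}}] \le r(\alpha^2 + \alpha)$, which is the first assertion.

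For the second assertion, I would first bound $\alpha_\delta \le \tfrac{1}{2(r+1)}$ for every $\delta \in (0,1)$: because $(1 + \delta^{1/3})^3 \ge 1$, the denominator in the definition of $\alpha_\delta$ is at least $(r+1)(2 - \delta)$, and $\delta \mapsto \tfrac{1-\delta}{2-\delta}$ is decreasing on $(0,1)$ with value $\tfrac12$ at $\delta = 0$. Substituting $\alpha \le \tfrac{1}{2(r+1)}$ into the first assertion yields
\[
\Pr[\vec X \text{ infeasible for \eqref{multi_problem}}] \;\le\; r\Bigl(\tfrac{1}{4(r+1)^2} + \tfrac{1}{2(r+1)}\Bigr) \;=\; \frac{2r^2 + 3r}{4(r+1)^2} \;\le\; \frac12,
\]
where the final inequality is equivalent to $2r^2 + 3r \le 2(r+1)^2$, i.e.\ to $0 \le r + 2$, which holds for all $r \in \N$. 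There is no serious obstacle here; the only mild care needed is in the expectation bookkeeping — separating diagonal from off-diagonal terms so that both constraint families of \eqref{multi_relax} get used — and in verifying the estimate $\alpha_\delta \le \tfrac{1}{2(r+1)}$.
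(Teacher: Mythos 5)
Your proposal is correct and follows essentially the same route as the paper: compute $\E[\vec X^\top \vec W^k \vec X]$ via independence, bound it by $(\alpha^2+\alpha)c^k$ using both the quadratic and the diagonal linear constraints of \eqref{multi_relax}, apply Markov plus a union bound over $k\in[r]$, and then plug in $\alpha_\delta \le \alpha_0 = \tfrac{1}{2(r+1)}$ for the second claim. Your explicit verification that $\alpha_\delta \le \tfrac{1}{2(r+1)}$ is a slightly more detailed version of the paper's monotonicity step, but the argument is the same.
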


\begin{proof}
Since for every $i$, $j\in [n]$ with $i\neq j$, $X_i$ and $X_j$ are stochastically independent, it holds for every $k\in [r]$ that
\begin{align*}
\E[\vec X^\top \vec W^k \vec X] &= \sum_{i,j\in [n] : i\neq j} w^k_{ij}\E[X_i]\E[X_j] + \sum_{i\in [n]}w^k_{ii}\E[X_i^2] \\
&= \alpha^2\sum_{i,j\in [n] : i\neq j}w^k_{ij}y_iy_j  + \alpha\sum_{i\in [n]}w^k_{ii}y_i \\
&\le (\alpha^2 + \alpha)c^k,
\end{align*}
where the inequality follows from the fact that $\vec y$ is a feasible solution of \eqref{multi_relax}.
Thus, Markov's inequality implies
\begin{align*}
\Pr[\vec X \text{ not feasible\,}] 
&= \Pr[\vec X^{\top}\vec W^k \vec X > c^k\; \text{for some } k\in [r]] \\
&\le \sum_{k=1}^r \Pr[\vec X^{\top}\vec W^k \vec X > c^k]  \\
&\le \sum_{k=1}^r \Pr\biggl[\vec X^{\top}\vec W^k \vec X \ge \frac{\E[\vec X^{\top}\vec W^k \vec X]}{\alpha^2+\alpha}\biggr] \\
&\le r (\alpha^2 + \alpha).
\end{align*}

Finally, for every $\delta\in (0,1)$
\begin{align*}
r[\alpha_{\delta}^2 + \alpha_{\delta}] &\le r[\alpha_0^2 + \alpha_0] \\
&= r\left[ \frac{1}{4(r+1)^2} + \frac{1}{2(r+1)} \right] \le \frac{1}{2},
\end{align*}
as required.
\qed
\end{proof}

To finish the proof, we show that for any constant $\delta \in (0,1)$, any optimal solution to \eqref{multi_problem} contains a constant number of $\delta$-heavy items only.

\begin{lemma}
Let $x^*$ be an optimal solution to problem \eqref{multi_problem}, let $\delta\in (0,1)$, and let
$H^*\define \{i\in [n]: x_i^* = 1 \text{ and } i \text{ is } \delta\text{-heavy} \}$. 
Then $|H^*| \le \frac{r}{\delta}$.
\label{lem:heavy_items_bounded}
\end{lemma}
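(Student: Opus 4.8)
The plan is to exploit the defining feature of a $\delta$-heavy item: its diagonal entry is a constant fraction of the budget in at least one of the $r$ constraints, and each individual budget then caps how many such items can simultaneously appear in $x^*$. Concretely, for $k\in[r]$ I would set
\[
H_k^* \define \{i\in H^* : w^k_{ii} > \delta\, c^k\}.
\]
Since $i\in H^*$ means $i$ is $\delta$-heavy, i.e.\ $w^k_{ii} > \delta c^k$ for \emph{some} $k\in[r]$, we have $H^* = \bigcup_{k\in[r]} H_k^*$, hence $\card{H^*} \le \sum_{k=1}^r \card{H_k^*}$. It therefore suffices to show $\card{H_k^*} < 1/\delta$ for every fixed $k$.

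Fix $k\in[r]$. Because $\vec W^k$ has nonnegative entries and $\vec x^*\in\{0,1\}^n$, discarding the off-diagonal contributions gives
\[
(\vec x^*)^\top \vec W^k \vec x^* \;=\; \sum_{i,j\in[n]} w^k_{ij}\, x^*_i x^*_j \;\ge\; \sum_{i:\,x^*_i=1} w^k_{ii} \;\ge\; \sum_{i\in H_k^*} w^k_{ii},
\]
using that $H_k^*\subseteq\{i : x^*_i=1\}$ for the last inequality. Feasibility of $\vec x^*$ in \eqref{multi_problem} yields $(\vec x^*)^\top \vec W^k \vec x^*\le c^k$, so $\sum_{i\in H_k^*} w^k_{ii}\le c^k$. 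On the other hand every $i\in H_k^*$ satisfies $w^k_{ii} > \delta c^k$, so $\card{H_k^*}\,\delta\, c^k < \sum_{i\in H_k^*} w^k_{ii} \le c^k$.

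Now distinguish two cases. If $c^k=0$, then $i\in H_k^*$ would force $w^k_{ii} > 0$, i.e.\ $w^k_{ii}\ge 1$ (integrality), and then $(\vec x^*)^\top\vec W^k\vec x^*\ge w^k_{ii}\ge 1 > 0 = c^k$ contradicts feasibility; hence $H_k^*=\emptyset$. If $c^k>0$, dividing $\card{H_k^*}\,\delta\, c^k < c^k$ by $c^k$ gives $\card{H_k^*} < 1/\delta$. In either case $\card{H_k^*} < 1/\delta$, and summing over $k\in[r]$ gives $\card{H^*}\le \sum_{k=1}^r \card{H_k^*} < r/\delta$, so in particular $\card{H^*}\le r/\delta$, as claimed.

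This argument is essentially immediate, so I do not anticipate a genuine obstacle; the only points deserving a moment's care are the bookkeeping of splitting $H^*$ according to which constraint witnesses heaviness (so that the single-constraint budget argument applies cleanly) and the degenerate budget $c^k=0$, which is dispatched directly via integrality rather than by a normalization step.
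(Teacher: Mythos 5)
Your proposal is correct and follows essentially the same route as the paper: split the heavy items according to which constraint witnesses their heaviness, drop off-diagonal terms to bound the diagonal sum by the budget $c^k$, and conclude $|H_k^*|\le 1/\delta$ before summing over $k$. The only difference is your explicit treatment of the degenerate case $c^k=0$, which the paper glosses over; this is a minor refinement, not a different argument.
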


\begin{algorithm}[tb]
$H_{\delta} \assign \{i\in [n]: \exists k\in [r] \text{ with } w_{ii}^k > \delta c^k\}$ \;
$z_{\delta} \assign$ optimal solution of \eqref{multi_problem} with $x_i = 0\,\forall i\in [n]\setminus H_{\delta}$ and $|N_1(x)| \le \frac{r}{\delta}$ (via enumeration) \;
$y_{\delta} \assign$ approximate solution of \eqref{multi_problem} with $x_i = 0 \,\forall i\in H_{\delta}$ 
computed by randomized rounding (Algorithm \ref{algo:randomized_rounding}) with $\alpha = \alpha_{\delta}$\;
\textbf{return} $\argmax_{x\in \{y_{\delta},z_{\delta}\}} p^{\top} x$
\caption{Randomized rounding with partial enumeration \label{algo:randomized_rounding_enumeration}}
\end{algorithm}

\begin{proof}
Let $N_1(x^*)\define \{i\in [n]: x^*_i = 1\}$ and
$H\define \{i\in [n] : i \text{ is } \delta\text{-heavy}\}$.
Then $H^* = N_1(x^*) \cap H$. Furthermore, we have 
$H = \bigcup_{k=1}^{r}H_k$, where
$H_k\define \{i\in [n] : w_{ii}^k > \delta c^k\}$. 
Since $\vec x^*$ is feasible for \eqref{multi_problem}, for every $k\in [r]$ we have 
\begin{align*}
c^k \ge (\vec x^*)^{\top}\vec W^{k} \vec x^* \ge \sum_{i\in N_1(\vec x^*)} w^k_{ii} 
\ge \card{N_1(\vec x^*) \cap H_k} \delta c^k,
\end{align*}
and thus $\card{N_1(\vec x^*) \cap H_k} \le \frac{1}{\delta}$.
It follows that 
\begin{align*}
|H^*| = \Bigl| N_1(\vec x^*) \cap \bigcup_{k=1}^{r}H_k \Bigr| \le \sum_{k=1}^r |N_1(\vec x^*)\cap H_k| \le \sum_{k=1}^r \frac{1}{\delta} =  \frac{r}{\delta},
\end{align*}
which completes the proof.
\qed\end{proof}

We are now in position to devise a randomized constant-factor approximation algorithm for Problem \eqref{multi_problem}. The algorithm first enumerates all solutions using only heavy items, then computes a solution with randomized rounding involving only the light items, and returns the better of the two solutions; see Algorithm~\ref{algo:randomized_rounding_enumeration}.

\begin{theorem}
\label{thm:rand_rounding_ratio}
For every $\bar{\eps} > 0$, there are $\eps > 0$ and $\delta > 0$ such that Algorithm~\ref{algo:randomized_rounding_enumeration} yields an $(\alpha+\bar{\eps})$-approximation for \eqref{multi_problem} where 
\[\alpha = \frac{1}{1 + 2(r+1)(\frac{r+1}{r})^r} \geq \frac{1}{1 + 2e(r+1)}.\]
\end{theorem}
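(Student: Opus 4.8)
The plan is to combine the three ingredients developed above: the enumeration over $\delta$-heavy items (Lemma~\ref{lem:heavy_items_bounded}), the conditional-expectation guarantee of the randomized rounding (Lemma~\ref{lem:randomized_rounding}), and the optimal choice of $\alpha$ together with the feasibility-probability bound (Lemmas~\ref{lem:best_alpha} and the subsequent lemma bounding $\Pr[X\text{ infeasible}]\le\tfrac12$). First I would fix an optimal solution $\vec x^*$ of \eqref{multi_problem} with $N_1(\vec x^*) = L^* \cup H^*$, where $L^*$ is the set of selected $\delta$-light items and $H^*$ the set of selected $\delta$-heavy items. By Lemma~\ref{lem:heavy_items_bounded}, $|H^*| \le r/\delta$, which is a constant, so the enumeration step producing $z_\delta$ inspects among others the set $H^*$ itself; hence $\vec p^\top \vec\ind_{N_1(z_\delta)} \ge \vec p^\top\vec\ind_{H^*}$ (here one uses Lemma~\ref{lem:set_01} to argue that, after fixing the heavy items of $H^*$ to $1$ and all other heavy items to $0$, the residual light-item problem is again of the form \eqref{multi_problem}, and $z_\delta$ is at least as good as taking no further items). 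Likewise, running randomized rounding on the instance restricted to light items only yields $y_\delta$ with $\vec p^\top \vec\ind_{N_1(y_\delta)} \ge \vec p^\top \vec\ind_{L^*}$ in expectation, up to the loss factors quantified below.

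Next I would put the pieces together quantitatively. Let $p^* = \vec p^\top \vec x^* = \vec p^\top\vec\ind_{L^*} + \vec p^\top\vec\ind_{H^*}$. From the enumeration branch, $\vec p^\top\vec\ind_{N_1(z_\delta)} \ge \vec p^\top\vec\ind_{H^*}$. For the rounding branch, apply Lemma~\ref{lem:randomized_rounding} to the light-only instance, whose optimum is at least $\vec p^\top\vec\ind_{L^*}$: with $\alpha = \alpha_\delta$ and an $\eps$-optimal relaxation solution, the expected profit conditioned on feasibility is at least $f(\alpha_\delta,\delta)(1-\eps)\,\vec p^\top\vec\ind_{L^*}$; moreover the feasibility probability is at least $\tfrac12$, so rejection sampling terminates in expected polynomially many rounds and the returned $y_\delta$ satisfies $\E[\vec p^\top\vec\ind_{N_1(y_\delta)}] \ge f(\alpha_\delta,\delta)(1-\eps)\,\vec p^\top\vec\ind_{L^*}$. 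Writing $\beta \define f(\alpha_\delta,\delta)(1-\eps)$, the better of the two solutions has value at least
\begin{align*}
\max\bigl\{\vec p^\top\vec\ind_{H^*},\; \beta\,\vec p^\top\vec\ind_{L^*}\bigr\} \ge \frac{\beta}{1+\beta}\bigl(\vec p^\top\vec\ind_{H^*} + \vec p^\top\vec\ind_{L^*}\bigr) = \frac{\beta}{1+\beta}\, p^*,
\end{align*}
using the elementary fact $\max\{a,\beta b\} \ge \tfrac{\beta}{1+\beta}(a+b)$ for $a,b\ge 0$. It remains to observe, via Lemma~\ref{lem:best_alpha}, that $f(\alpha_\delta,\delta) \to \tfrac{1}{2(r+1)}(\tfrac{r}{r+1})^r$ as $\delta\to 0$; hence $\beta/(1+\beta) \to \alpha = \tfrac{1}{1+2(r+1)((r+1)/r)^r}$. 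Thus for any target slack $\bar\eps>0$ one can choose $\delta>0$ and $\eps>0$ small enough that $\beta/(1+\beta) \ge \alpha - \bar\eps$; the bound $\alpha \ge \tfrac{1}{1+2e(r+1)}$ follows from $(1+1/r)^r \le e$.

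I expect two points to need the most care. First, the output of Algorithm~\ref{algo:randomized_rounding} is a random variable, so the claim "$(\alpha+\bar\eps)$-approximation" must be interpreted as holding in expectation (or, with a standard boosting argument via independent repetitions and Markov's inequality, with high probability); I would state this explicitly and note that the expected number of rejection-sampling iterations is at most $2$, keeping the running time polynomial. Second, one must be careful that applying Lemma~\ref{lem:randomized_rounding} to the light-only \emph{restricted} instance is legitimate: the restriction sets the heavy variables to $0$, which only shrinks each quadratic form and preserves both psd-ness and non-negativity, and every surviving item is $\delta$-light by construction, so the hypotheses of Lemma~\ref{lem:randomized_rounding} are met; the relaxation \eqref{multi_relax} of the restricted instance is solved $\eps$-optimally as guaranteed. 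The remaining manipulations are the routine limit computation already recorded in Lemma~\ref{lem:best_alpha} and the one-line $\max$ inequality above.
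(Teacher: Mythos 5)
Your proof is correct and follows essentially the same route as the paper: split the optimum into its $\delta$-heavy part (covered by the enumeration producing $z_\delta$ via \Cref{lem:heavy_items_bounded}) and its $\delta$-light part (covered by \Cref{lem:randomized_rounding} with $\alpha = \alpha_\delta$), then balance the two branches — your inequality $\max\{a,\beta b\}\ge \tfrac{\beta}{1+\beta}(a+b)$ is exactly the paper's two-case distinction at the threshold $\rho_{\eps,\delta}=f_{\eps,\delta}/(1+f_{\eps,\delta})$, followed by the same limit argument via \Cref{lem:best_alpha}. Your explicit remark that the rounding guarantee holds in expectation, with the feasibility probability at least $\tfrac12$ keeping the rejection sampling polynomial, is if anything slightly more careful than the paper's wording.
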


\begin{proof}
Let $\epsilon > 0$ and $\delta > 0$ be arbitrary. We claim that Algorithm~\ref{algo:randomized_rounding_enumeration} yields a $\rho_{\eps,\delta}$-approximation where
\[ \rho_{\eps,\delta} \define \frac{f_{\eps,\delta}}{1 + f_{\eps,\delta}}, \] 
with $f_{\eps,\delta} \define f(\alpha_{\delta},\delta)(1-\eps)$.

Let $x^*$ be an optimal solution of \eqref{multi_problem}.
We distinguish two cases.

\noindent
\textbf{First case: $\sum_{i\in H_{\delta}} p_i x_i^* \ge \rho_{\eps,\delta} p^{\top}x^*$.}
Then, by \Cref{lem:heavy_items_bounded}, 
\[
p^{\top}z_{\delta} \ge \sum_{i\in H_{\delta}}p_ix^*_i \ge \rho_{\eps,\delta} p^{\top}x^*.
\] 

\noindent
\textbf{Second case: $\sum_{i\in H_{\delta}}p_i x^*_i < \rho_{\eps,\delta} p^{\top}x^*$.} Thus,
$\sum_{i\in [n]\setminus H_{\delta}}p_i x^*_i \ge (1-\rho_{\eps,\delta}) p^{\top}x^*$. 
\Cref{lem:randomized_rounding} yields
\begin{align*}
p^{\top}y_{\delta} \ge f_{\eps,\delta} \sum_{i\in [n]\setminus H_{\delta}} p_i^{\top}x^*_i
\ge f_{\eps,\delta}(1 - \rho_{\eps,\delta}) p^{\top} x^*
= \rho_{\eps,\delta} p^{\top}x^*.
\end{align*}

Finally, by the continuity of $f$ and $\alpha_{\delta}$ we obtain
\begin{align*}
\lim_{\eps,\delta\to 0}\rho_{\eps,\delta} = \frac{f(\alpha_0,0)}{1 + f(\alpha_0,0)} = \frac{1}{1 + 2(r+1)\left( \frac{r+1}{r} \right)^r},\end{align*}
which completes the proof.
\qed\end{proof}

\section{Approximation Hardness}
\label{sec:hardness}

In this section, we show that packing problems with convex quadratic constraints of type \eqref{eq:problem} are $\mathsf{APX}$-hard.

\begin{theorem}\label{no_ptas3}
It is $\NP$-hard to approximate packing problems with convex qua\-dra\-tic constraints by a factor of $\frac{91}{92} + \eps$, for any $\eps > 0$.
\end{theorem}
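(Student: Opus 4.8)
The plan is to reduce from a hard-to-approximate instance of \MIS{} (maximum independent set in bounded-degree graphs) or, more precisely, from the gap version of \textsf{MAX E3-SAT} or \textsf{MAX-3-DIMENSIONAL-MATCHING} whose inapproximability threshold is close to the stated constant $\frac{91}{92}$. Concretely, I would start from Vertex Cover on bounded-degree graphs, or from the $\mathsf{APX}$-hard restriction of independent set on cubic graphs, for which the ratio $\frac{91}{92}$ (or a nearby explicit constant arising from H\aa stad-style PCP constructions, e.g.\ via Berman--Karpinski bounds on low-degree MIS) is known. The key structural observation is that for an undirected graph $G = (V,E)$ with adjacency matrix $A$, the matrix $W = A + \diag(\vec\deg)$ (where $\deg_i$ is the degree of $i$) is \emph{diagonally dominant with nonnegative entries}, hence symmetric positive semi-definite, and satisfies $\vec\ind_S^\top W \vec\ind_S = 2\,|E(G[S])| + \sum_{i\in S}\deg_i$ for every $S\subseteq V$. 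In particular $\vec\ind_S^\top W \vec\ind_S = \sum_{i\in S}\deg_i$ if and only if $S$ is an independent set. This gives a clean way to encode "no edge inside $S$" as a convex quadratic budget constraint, which is exactly the form of \eqref{eq:problem}.

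The main subtlety is that we cannot simply set $c = 0$: that would bring back the $n^{1-\epsilon}$ hardness and destroy the claim of a \emph{constant}-factor gap. Instead I would take $\vec p = \vec 1$ (unit profits) and choose the budget $c$ to allow exactly one unit of "slack" — e.g.\ $c = \sum_{i\in V}\deg_i$ after first \emph{subdividing} or \emph{padding} the instance so that every vertex has the same degree $\Delta$, giving $W = A + \Delta I$ and $\vec\ind_S^\top W \vec\ind_S = 2|E(G[S])| + \Delta|S|$. Then the constraint $\vec x^\top W \vec x \le c$ with a carefully tuned $c$ forces the number of internal edges of the selected set to be small (bounded by a constant fraction of $|S|$), and a standard local-repair argument converts any such "almost-independent" set into a genuine independent set while losing only a constant factor. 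Calibrating $c$ and the degree $\Delta$ so that the resulting constant matches $\frac{91}{92}$ is the place where the precise choice of the base $\mathsf{APX}$-hard problem matters; one would start from the tightest available explicit gap (for instance, the hardness of approximating Vertex Cover within $\frac{100}{99}$ on bounded-degree graphs, or the analogous independent-set bound) and track how the reduction's overhead dilutes it. I expect that pushing the constant all the way to $\frac{91}{92}$ requires reducing from a problem already engineered to give that number — likely a bounded-occurrence MAX-SAT variant — rather than from vanilla independent set.

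So the steps, in order, are: (1) fix an $\mathsf{APX}$-hard source problem $\Pi$ with an explicit gap $(s, s')$ whose ratio, after the transformation below, is $\frac{91}{92}$; (2) given an instance of $\Pi$, build a graph (or hypergraph flattened to a graph) $G$ encoding its constraints, with uniform degree $\Delta$ obtained by padding; (3) set $W = A_G + \Delta I$, which is psd with nonnegative integer entries, $\vec p = \vec 1$, and $c$ equal to $\Delta$ times the target independent-set size plus a controlled additive term; (4) show completeness: a good solution of $\Pi$ yields a large independent set $S$ with $\vec\ind_S^\top W\vec\ind_S \le c$, hence a feasible $\vec x$ of objective $|S|$; (5) show soundness: any feasible $\vec x$ of \eqref{eq:problem} with large objective has $\vec\ind_S^\top W \vec\ind_S$ close to $\Delta|S|$, so $G[S]$ has few edges, and greedily deleting one endpoint of each internal edge produces an independent set nearly as large, which pulls back to a good solution of $\Pi$; (6) combine (4) and (5) to transport the gap. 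The main obstacle is step (5) together with the bookkeeping in (1): making the "delete one endpoint per internal edge" loss small enough, and choosing the source gap tightly enough, so that the composed ratio is exactly $\frac{91}{92}+\eps$ rather than some weaker explicit constant. Everything else — psd-ness of $W$, integrality, the reformulation via \Cref{lem:set_01} if fixings are needed — is routine.
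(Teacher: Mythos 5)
Your sketch is a reasonable template for proving \emph{some} explicit APX-hardness, but it does not prove the statement as given: the content of the theorem is the specific constant $\tfrac{91}{92}$, and your proposal explicitly leaves open where that number comes from (``the place where the precise choice of the base $\mathsf{APX}$-hard problem matters'', ``I expect that pushing the constant all the way to $\tfrac{91}{92}$ requires reducing from a problem already engineered to give that number''). Steps (1) and (5) are exactly where the work lies, and they are not carried out. Moreover, the quantitative bookkeeping of your construction works against you: with $W = A + \Delta I$ on a $\Delta$-regular graph and budget $c \approx \Delta k + t$, a feasible set $S$ only satisfies $2\,|E(G[S])| \le \Delta(k-|S|) + t$, and the ``delete one endpoint per internal edge'' repair loses up to $|E(G[S])|$ vertices, so the transported gap is diluted roughly by a factor $(1+\Delta/2)$; plugging in the known explicit inapproximability constants for bounded-degree independent set (Berman--Karpinski-type bounds, which are already very close to $1$) yields a constant much weaker than $\tfrac{91}{92}$. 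So as written the argument would at best give ``NP-hard to approximate within $1-\delta$ for some unspecified small $\delta>0$'', which is not the claimed theorem.

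For contrast, the paper gets the constant by a different and more direct reduction: from $6$-set packing, which Hazan et al.\ showed is NP-hard to approximate within $\tfrac{22}{23}+\eps$. Taking $A$ to be the element--set incidence matrix, it sets $W = A^\top A$ (completely positive, hence psd and nonnegative), unit profits, and budget $c = 6k$; then $x^\top W x = \sum_i \bigl(\sum_{j: i\in S_j} x_j\bigr)^2$, so $k$ pairwise disjoint sets cost exactly $6k$, while in a No-instance every set chosen beyond the first $\lfloor\tfrac{22}{23}k\rfloor$ must overlap an earlier one and therefore costs at least $8$, and the counting inequality $6k \ge 8k' - \tfrac{44}{23}k$ gives $k' \le \tfrac{91}{92}k$. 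The key feature your construction lacks is that in the set-packing encoding the ``penalty'' for a violation ($8$ versus $6$) is a constant fraction of the per-item cost, so the gap is transported with only a mild loss; in your graph encoding the per-item cost $\Delta$ dwarfs the per-edge penalty, which is why the constant degrades. If you want to salvage your route, you would have to pick a source problem and parameters for which the per-item weight and per-violation penalty are comparable -- which is essentially what the paper's $6$-set packing reduction does.
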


\begin{proof}
We reduce from the $6$-set packing problem which is $\NP$-hard to approximate by a factor of $\frac{22}{23} + \eps$  for all $\eps > 0$; see Hazan et al.~\cite{hazan2003}. 
An instance of a $6$-set packing is given by a ground set $[m]$ and a family $\mathcal{S} \subseteq 2^{[m]}$ of subsets of $[m]$ such that $|S| = 6$ 
for all $S \in \mathcal{S}$. A subfamily $\mathcal{S}^* \subseteq \mathcal{S}$ is a feasible solution to the $6$-set packing problem 
if $S \cap T = \emptyset$ for all $S,T \in \mathcal{S}^*$.
For a given instance of $6$-set packing, and a value $k \in \N$ the \emph{gap problem} is the decision problem to decide whether:
\begin{description}
	\item[$\mathsf{Yes}$:] there is a solution to the $6$-set packing problem of size at least $k$, or
	\item[$\mathsf{No}$:] every solution has size strictly smaller than $\frac{22}{23}k$.
\end{description}
For optimal sizes in the interval $[\frac{22}{23}k,k)$ any answer is admissible. The approximation hardness of $6$-set packing implies that the gap problem is an $\NP$-hard decision problem. 

Let $n \define |\mathcal{S}|$ and number the sets $\mathcal{S} = \{S_1,S_2,\dots,S_n\}$. 
Let $\vec A =  (a_{ij})_{i,j}\in \{0,1\}^{m \times n}$ be defined as  $a_{ij} = 1$ if and only if $i \in S_j$, and let $\vec W = \vec A^{\top} \vec A$. Consider the problem
  \begin{align}
   \begin{split}
  \text{maximize}\qquad  & \ones^{\top} \vec x \\
      \text{subject to}\qquad & \vec x^{\top} \vec W \vec x \le 6k,\\
      &\vec x \in \{0,1\}^n,
    \end{split}
    \tag{$SP$}
    \label{eq:6SPP}
  \end{align}
  where $\ones = (1,\dots,1)^\top$ is the all-ones vector. We calculate
  \begin{equation}\label{eq:6SPP_weight}
  \vec x^{\top} \vec W \vec x = \norm{\vec A \vec x}_2^2 =
    \sum_{i=1}^m \bigg(\sum_{j=1}^n a_{ij}\, x_j\bigg)^{\!2} =
    \sum_{i=1}^m \Bigl( \sum_{j \in [n] : i \in S_j} x_{j}\Bigr)^2.
  \end{equation}

Suppose, we have a $\mathsf{Yes}$-instance for the gap problem and let $\mathcal{S}^*$ be a subset of pairwise disjoint sets of cardinality $k$. 
Then a feasible solution for \eqref{eq:6SPP} is given by $\vec x^*$ defined as $x^*_j = 1$ if $S_j \in \mathcal{S}^*$, and $x^*_j = 0$, otherwise. 
Since every set $S_j$, $j \in [n]$, contributes at least $6$ to the left hand side of the knapsack constraint \eqref{eq:6SPP_weight}, 
this solution is also optimal for \eqref{eq:6SPP} and has an objective value of $k$.

Next, consider a $\mathsf{No}$-instance for the gap problem, let $\vec x^*$ be a corresponding optimal solution of \eqref{eq:6SPP}, and let $k'$ be its objective value. 
Since for a $\mathsf{No}$-instance every solution of the $6$-set packing problem has size strictly less than $\frac{22}{23}k$, 
every set that is picked beyond the first $\lfloor \frac{22}{23}k \rfloor$ sets, 
intersects at least once with at least one of the first $\lfloor \frac{22}{23}k \rfloor$ sets. 
Thus, the first $\lfloor \frac{22}{23}k \rfloor$ sets each contribute at least $6$ to \eqref{eq:6SPP_weight}, 
and each of the further $k' - \lfloor \frac{22}{23}k \rfloor$ sets each contributes at least $5 + 4 -1 = 8$ to \eqref{eq:6SPP_weight}. 
We obtain
\begin{align*}
6k &\geq (\vec x^*)^{\top} \vec W \vec x^*\\
&\ge 6 \bigg\lfloor\frac{22}{23}k\bigg\rfloor + 8\biggl(k' - \bigg\lfloor\frac{22}{23}k\bigg\rfloor\biggr)\\
&\geq 8k' - \frac{44}{23}k
\end{align*}
implying $k' \leq \frac{91}{92}k$.
We conclude that for a $\mathsf{Yes}$-instance the objective value of \eqref{eq:6SPP} is at least $k$ while for a $\mathsf{No}$-instance it is strictly less than $\frac{91}{92}k$. Therefore, the problem is $\NP$-hard to approximate by a factor of $\frac{91}{92} + \eps$ for any $\eps > 0$.
\qed\end{proof}

\section{Computational results}
\label{sec:computational_results}

We apply our algorithms to a problem of the type described in \Cref{ex:gas}.
Specifically, we solve the welfare maximization problem for instance~134 of the 
GasLib library \cite{SABHJKKIOSSS17}; see \Cref{fig:greece} for an illustration of the network $G=(V,E)$.
\begin{figure}[tb]
\centering
\input{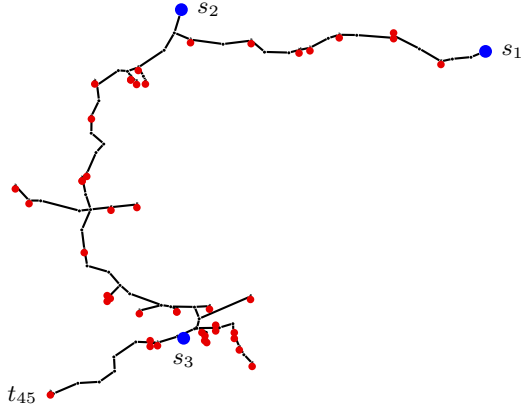}
\caption{The Gaslib-134 instance. Sources are shown in blue, sinks in red.}
\label{fig:greece}
\end{figure}
The instance contains upper and lower pressure bounds for every node $v\in V$ as well as
all physical properties to compute the pipe resistances $\beta_e$, $e\in E$.

Sources and sinks are denoted by~$S$ and $T$, respectively.
Every sink $t\in T$ requests a transportation of $q_t$ units of gas to $t$. 
To ensure the robustness of the network in the sense of \cite{Klimm2019}, 
we assume that all sinks between $s_1$ and $s_2$ are (possibly) supplied by $s_1$,
all sinks between $s_3$ and $t_{45}$ by $s_3$, and all other sinks by $s_2$. 
Denote the set of all sinks that are (possibly) supplied by $s_i$ by $T_i$, 
$i=1,2,3$.
For simplicity, we assume that the economic welfare is proportional to the amount of transported gas. 
That is, there is a constant $\theta > 0$ such that 
for every sink $t\in T$ the economic welfare $p_t$ of transporting $q_t$ units of gas
to $t$ equals $\theta q_t$. 

Our goal is to choose a welfare-maximal subset of transportations
that can be satisfied simultaneously while the pressures 
at the first sink $s_1$ and the last source $t_{45}$ are within their feasible interval. 
To that end, let $\bar E$ denote the path from $s_1$ to $t_{45}$, and for
every $t\in T_i$ denote by $E_t$ the set of edges on the unique path from $s_i$ to $t$, $i = 1,2,3$.
Let $p = (p_t)_{t\in T}$, $W = (w_{t,t'})_{t,t'\in T}$,
with $w_{t,t'} = \sum_{e\in \bar E \cap E_t\cap E_{t'}}\beta_e q_tq_{t'}$, 
and let $c = \bar \pi_{s_1} - \ubar \pi_{t_{45}}$,
where for a node $v\in V$, $\bar\pi_v$ and $\ubar\pi_v$ denote the upper and lower bound on the squared pressure at $v$, respectively.
Finally, let $x = (x_t)_{t\in T} \in \{0,1\}^T$, where $x_t = 1$ if and only if sink $t$ is supplied.
Then, the welfare-maximization problem can be formulated as \eqref{eq:problem}; see \Cref{ex:gas}.

The GasLib-134 instance contains 1234 different scenarios, where for each scenario demands $\hat q_t$ are given for every sink $t\in T$.
In order to make the optimization problem non-trivial we increase the node demands by setting $q_t = \gamma\hat q_t$,
for $\gamma\in \Gamma\define\{5,10,50,100\}$.
We apply the golden ratio algorithm, the greedy algorithm, and randomized rounding to the first 100 scenarios. 
For each scenario we consider every $\gamma\in \Gamma$.
Each of the three algorithms is executed in three different versions, one without partial enumeration, one with partial enumeration with
one initial item, and one with partial enumeration with two initial items.

We run randomized rounding with $\alpha$ chosen uniformly at random from $[0,1]$. 
Instead of a single feasible realization, we generate 100 feasible realizations of $\ber(\phi y)$
and return the one with the highest profit.
With the golden ratio algorithm, instead of scaling an optimal solution $y$ of \eqref{eq:convex_relax} by $\phi$ in order to obtain a 
feasible solution of \eqref{eq:nonconvex_relax}, we scale it by the largest number $\lambda\in [\phi,1]$ such that $\lambda y$ is
feasible for \eqref{eq:nonconvex_relax}. We find this number $\lambda$ using binary search. 
In addition, we use the improvements described in \Cref{rem:swapping_enhancement}.

The result of each algorithm is compared to an optimal solution computed with a standard MIP solver applied to the following MIP
\begin{align*}
\text{maximize} \qquad &\vec p^\top \vec x\\
\text{subject to} \qquad &\sum_{i=1}^n z_i \leq c,\\
& z_i \ge \sum_{j=1}^n w_{ij}(x_i+x_j-1) \quad \text{for all } i\in [n], \\
& z_i \ge 0 \quad \text{for all } i\in [n],\\
&\vec x \in \{0,1\}^n, \label{eq:problem}
\end{align*}
which can be shown to be equivalent to \eqref{eq:problem}.

\begin{figure}[tb]
  \centering
  \includegraphics[width=0.754\textwidth]{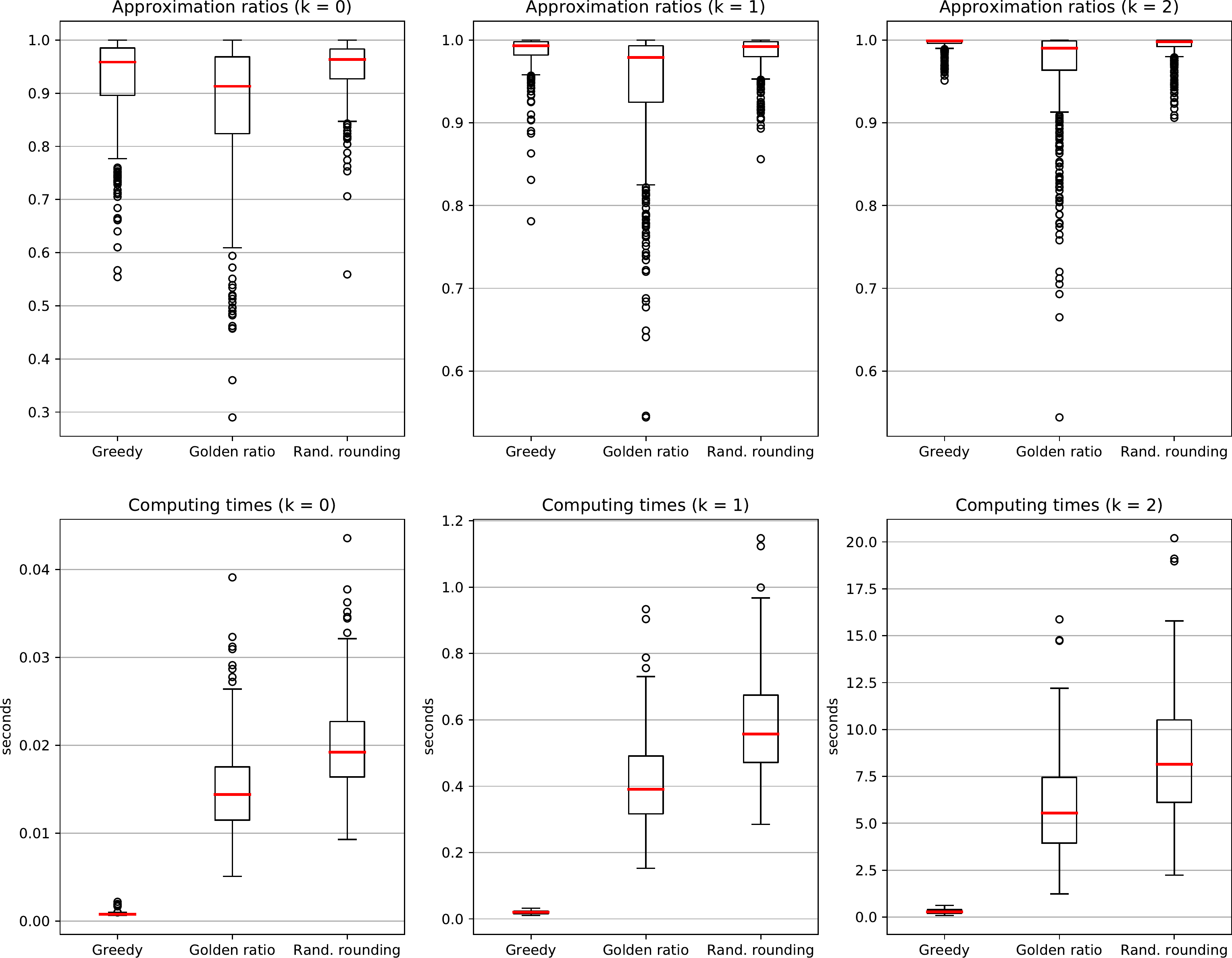}
  \caption{Approximation ratios (top row) and computation times (bottom row) of the three algorithms when executed with partial enumeration of $k = 0, 1, 2$ elements. The red line indicates the median.}
  \label{fig:computational_results}
\end{figure}

The computations are executed on a 6-core AMD Phenom II X6 1090T processor with 
3.3 GHz. The code is implemented in Python 3.6 and we use the SLSQP algorithm of the SciPy \texttt{optimize} package 
to solve the convex relaxation \eqref{eq:convex_relax}.
The results are shown as box plots in \Cref{fig:computational_results} and in \Cref{tab:computational_results}.

\begin{table}[tb]
\centering
\begin{tabularx}{\textwidth}{@{}l@{\extracolsep{\fill}}cc cc cc@{}}
\toprule
& \multicolumn{2}{c}{No enumeration} & \multicolumn{2}{c}{1-enumeration} & \multicolumn{2}{c}{2-enumeration} \\
& Mean & SD & Mean & SD & Mean & SD \\
\midrule
Greedy & 0.925 & 0.0837 & \textbf{0.985} & 0.0228& \textbf{0.996} &  0.0079\\
Golden ratio & 0.875 & 0.1288 &0.944  & 0.0773 & 0.962 &  0.0639\\
Rand.\ rounding & \textbf{0.948} & 0.0504 &0.984  & 0.0220& 0.991 & 0.0160\\
\bottomrule
\end{tabularx}
\caption{Mean and standard deviation (SD) of the approximation ratio 
of the greedy algorithm, the golden ratio algorithm, and randomized rounding.
Each algorithm has been executed without partial enumeration (left), with partial enumeration with one initial item (middle), and
with partial enumeration with two initial items (right). \label{tab:computational_results}}
\end{table}

We observe that the greedy algorithm on average achieves the best approximation ratios when combined with partial enumeration. 
At the same time it runs approximately 20 times faster than the golden ratio algorithm and randomized rounding.
The slower running time of these two algorithms is due to the fact that they rely on solving the convex relaxation first. 
The approximation ratio of all three algorithms is on average much higher than their proven worst case lower bounds.
However, the quality of the solutions produced by the golden ratio algorithm is subject to strong fluctuations.
By running the algorithm with partial enumeration with three initial items we could guarantee 
a ratio of at least $\phi$ for every instance, as was shown in \Cref{thm:phi_approximation}.

\bibliography{literature}

\end{document}